\newcommand{\ZZ}{\mathbb Z}
\newcommand{\CC}{\mathbb C}
\newcommand{\NN}{\mathbb N}
\newcommand{\EE}{\mathbb E}
\newcommand{\cpt}{\mathbb K}
\def\bK{\mathbf K}
\def\cO{\mathcal O}
\def\Id{\textup{Id}}
\def\deg{\textup{deg}}
\def\C{\textup{C}}
\def\KKcat{\mathtt{KK}}
\def\Z{\textup{Z}}
\def\colim{\textup{colim}}
\def\id{\mathrm{id}}
\def\Im{\textup{Im}}
\def\End{\textup{End}}
\def\St{\mathtt{Stab}}
\def\Ext{\textup{Ext}}
\def\Hom{\textup{Hom}}
\def\ker{\textup{ker}}
\def\Mod{\mathtt{Mod}}
\def\RMod{\mathtt{RMod}}
\def\op{\textup{op}}
\def\prot{\hat{\otimes}}
\def\Ab{\sf Ab}
\def\sC{\text{$\sigma$-$C^*$}}
\def\NSH{\mathtt{NSH}}
\def\K{\textup{K}}
\def\S{\mathcal{S}}
\def\bZ{\mathbf{Z}}
\def\iNSp{\mathtt{NSp}}
\def\iNS{\mathtt{N}\mathcal{S_*}}
\def\cSf{\mathcal{S}^{\textup{fin}}_*}
\def\Sp{\mathtt{Sp}}
\def\k{\textup{k}}
\def\z{\textup{z}}
\def\calg{\mathtt{CAlg}}
\def\cZ{\mathcal{Z}}
\def\KK{\textup{KK}}
\def\E{\textup{E}}
\def\End{\textup{End}}
\def\cC{\mathcal C}
\def\cN{\mathcal N}
\def\csus{\mathtt{\Sigma_{C^*}}}
\def\1{\bf{1}}
\def\Csep{\mathtt{SC^*}}
\def\iCsep{\text{$\mathtt{SC_\infty^*}$}}
\def\dlim{\varinjlim}
\def\cT{\mathcal T}
\def\cD{\mathcal D}
\def\cA{\mathcal{A}}
\def\cB{\mathcal{B}}
\newcommand{\map}{\rightarrow}
\newcommand{\functor}{\rightarrow}
\def\SHo{\mathtt{\Sigma Ho^{C^*}}}
\def\h{\mathtt{h}}
\def\Ind{\textup{Ind}}
\def\hosc{\mathtt{HoSC^*}}
\def\Oinf{\mathcal{O}_\infty}
\def\one{\mathbf{1}}
\def\cQ{\mathcal{Q}}
\def\pNSp{\mathtt{NSp'}}
\def\ikk{\mathtt{KK_\infty}}
\def\izz{\mathtt{ZZ_\infty}}
\def\kkc{\mathtt{kk^{C^*}}}
\def\uct{\mathtt{KK}^{\mathtt{bt}}_{\mathtt{\infty}}}
\def\czb{\mathtt{ZZ}^{\mathtt{bt}}_{\mathtt{\infty}}}
\def\Sigmat{\Sigma^\infty_{S'}}
\newcommand{\beq}{\begin{eqnarray}}
\newcommand{\beqn}{\begin{eqnarray*}}
\newcommand{\eeq}{\end{eqnarray}}
\newcommand{\eeqn}{\end{eqnarray*}}
\theoremstyle{definition}
\newtheorem{thm}{Theorem}[section]
\theoremstyle{definition}
\newtheorem{lem}[thm]{Lemma}
\newtheorem{prop}[thm]{Proposition}
\newtheorem{cor}[thm]{Corollary}
\newtheorem{ex}[thm]{Example}
\newtheorem{defn}[thm]{Definition}
\newtheorem{rem}[thm]{Remark}
\begin{document}

\title{Colocalizations of noncommutative spectra and bootstrap categories}
\author{Snigdhayan Mahanta}
\email{snigdhayan.mahanta@mathematik.uni-regensburg.de}
\address{Fakult{\"a}t f{\"u}r Mathematik, Universit{\"a}t Regensburg, 93040 Regensburg, Germany.}
\subjclass[2010]{19Dxx, 46L85, 55Nxx, 55P42, 46L35}
\keywords{noncommutative spectra, stable $\infty$-categories, triangulated categories, bootstrap categories, (co)localizations, $C^*$-algebras, bivariant $\K$-theory}
\thanks{This research was supported by the Deutsche Forschungsgemeinschaft (SFB 878 and SFB 1085), ERC through AdG 267079, and the Humboldt Professorship of M. Weiss.}

\begin{abstract}
We construct a compactly generated and closed symmetric monoidal stable $\infty$-category $\pNSp$ and show that $\h\pNSp^\op$ contains the suspension stable homotopy category of separable $C^*$-algebras $\SHo$ constructed by Cuntz--Meyer--Rosenberg as a fully faithful triangulated subcategory. Then we construct two colocalizations of $\pNSp$, namely, $\pNSp[\cpt^{-1}]$ and $\pNSp[\cZ^{-1}]$, both of which are shown to be compactly generated and closed symmetric monoidal. We prove that Kasparov $\KK$-category of separable $C^*$-algebras sits inside the homotopy category of $\ikk :=\pNSp[\cpt^{-1}]^\op$ as a fully faithful triangulated subcategory. Hence $\ikk$ should be viewed as the stable $\infty$-categorical incarnation of Kasparov $\KK$-category for arbitrary pointed noncommutative spaces (including nonseparable $C^*$-algebras). As an application we find that the bootstrap category in $\h\pNSp[\cpt^{-1}]$ admits a completely algebraic description. We also construct a $\K$-theoretic bootstrap category in $\h\ikk$ that extends the construction of the UCT class by Rosenberg--Schochet. Motivated by the algebraization problem we finally analyse a couple of equivalence relations on separable $C^*$-algebras that are introduced via the bootstrap categories in various colocalizations of $\pNSp$.
\end{abstract}

\maketitle

\setcounter{tocdepth}{2}
\tableofcontents

\newpage
\begin{center}
{\bf Introduction}
\end{center}

In \cite{MyNSH} we constructed a compactly generated stable $\infty$-category of noncommutative spectra $\iNSp$ primarily with the intention of proving that the noncommutative stable homotopy category is topological according to the definition in \cite{SchTopTri}. The stable $\infty$-category $\iNSp$ affords an ideal framework for the stable homotopy theory of noncommutative spaces. In \cite{MyNSHLoc} we demonstrated that $\iNSp$ is closed symmetric monoidal, which enabled us to produce smashing colocalizations of $\iNSp$ that generalize bivariant (connective) $\E$-theory category and some variants thereof. One aim of our project is to construct similar interesting stable $\infty$-subcategories of noncommutative spectra (possibly by colocalizations) and ideally give purely algebraic descriptions of their homotopy categories. This is the {\em algebraization problem} that pertains to computational aspects. Concurrently this project also settles the long-standing problem of constructing generalized (co)homology theories on the category of $C^*$-algebras. In fact, thanks to Brown representability in this setup (see Theorem 2.23 of \cite{MyNSH} and Remark \ref{BrownRepr}), noncommutative spectra parametrize all generalized (co)homology theories. The crucial property is the carefully designed compact generation of noncommutative spectra.

Amongst the bivariant homology theories present in the literature $\KK$-theory plays a distinguished role as it has proved to be remarkably effective in tackling various problems in topology and geometry (see, for instance, \cite{KasICM, RosKKK, ConBook, HigICM, YuICM}). The assumption of separability is inherent in Kasparov's original definition of $\KK$-theory \cite{KasKK1,KasKK2}. However, for certain applications to index theory and problems in non-metrizable topology this assumption is too restrictive. Moreover, the construction of the Kasparov (composition) product is a very delicate issue in this setup. Extending the Cuntz picture of $\KK$-theory it is possible to construct a $\KK$-theory $\mathtt{kk^{C^*}}$ for nonseparable $C^*$-algebras (see, for instance, \cite{CunMeyRos}), where the composition product can be established quite easily. In Remark 8.29 of \cite{CunMeyRos} the authors state \begin{quote}Although $\mathtt{kk^{C^*}}$ is defined for inseparable $C^*$-algebras, it does not seem the right generalisation of Kasparov theory to this realm ..... \end{quote}

One motivation of this article is to address this point. After the appearance of \cite{ThomThesis} and \cite{MeyNes} bivariant homology theories of separable $C^*$-algebras have been treated via tensor triangulated categories. Triangulated categories do not offer the full strength of homotopy theoretic techniques. The stable $\infty$-category of noncommutative spectra $\iNSp$ resolves this issue satisfactorily. Hence in this article we construct a stable $\infty$-categorical incarnation of the $\KK$-category that is also able to treat nonseparable $C^*$-algebras. Along the way we prove that the $\KK$-category is topological and construct a generalization of the Rosenberg--Schochet bootstrap category in this setting. The existence of the Kasparov (composition) product also follows effortlessly in our setup. The article is organised as follows:

In Section \ref{SepCstab} we construct a variant of noncommutative spectra (denoted by $\pNSp$). The excisive behaviour of $\h\iNSp^\op$ and the $\KK$-category are not compatible, although the difference can often be ignored. The triangulated category $\h\pNSp^\op$ eliminates this difference entirely. Then we show that $\pNSp$ is a compactly generated stable $\infty$-category that is also closed symmetric monoidal (see Proposition \ref{Cgen} and Theorem \ref{SMNSp}). Theorem \ref{NSp} explains how pointed noncommutative spaces and spectra generalize their commutative counterparts. The $\KK$-category $\mathtt{kk^{C^*}}$ is constructed as a localization of a suspension-stable homotopy category of $C^*$-algebras $\SHo$. We show that the two triangulated categories $\SHo$ and $\h\pNSp^\op$ agree when restricted to separable $C^*$-algebras (see Proposition \ref{Sho} for a precise formulation); however, $\h\pNSp$ is a compactly generated triangulated category in contrast with $(\SHo)^\op$ that does not even admit arbitrary products. Since $\pNSp$ is symmetric monoidal, one can construct smashing colocalizations thereof with respect to coidempotent objects. We mostly focus our attention on two smashing colocalizations, namely, $\pNSp[\cpt^{-1}]$ and $\pNSp[\cZ^{-1}]$. The first colocalization is designed to construct a stable $\infty$-categorical $\KK$-category with good homotopy theoretic properties, whereas the second one is chosen to address the algebraization problem in a tractable setting. In subsection \ref{GenBoot} we discuss the basic construction of the bootstrap category generated by a set of compact objects in a closed symmetric monoidal and compactly generated stable $\infty$-category. The intuitive picture is that the objects in the bootstrap category are precisely the ones that can be constructed by simple operations starting from the chosen set of compact objects as the basic building blocks.

If a noncommutative space $X$ is a coidempotent object in $\iNS$, then so is its {\em suspension spectrum} $\Sigmat(X)$ in $\pNSp$. The coidempotent objects in $\iNS$ include the $C^*$-algebra of compact operators $\cpt$ as well as any strongly self-absorbing $C^*$-algebra (like the Jiang--Su algebra $\cZ$). In Section \ref{KKcoloc} we show that the smashing colocalization $\pNSp[\cpt^{-1}]$ is a compactly generated and closed symmetric monoidal stable $\infty$-category (see Theorem \ref{klim}). Then we show that there is a fully faithful exact functor from Kasparov $\KK$-category into $\h\pNSp[\cpt^{-1}]^\op$ (see Theorem \ref{KK}), which also shows that the $\KK$-category is topological (see Proposition \ref{KKtop}). Thus we denote $\pNSp[\cpt^{-1}]^\op$ by $\ikk$ and it is our proposed bivariant $\K$-theory $\infty$-category for arbitrary pointed noncommutative spaces, whose construction was alluded to in Remark 2.29 of \cite{MyNSH}. The category of pointed noncommutative spaces also accommodates nonseparable $C^*$-algebras. Hence the stable $\infty$-category $\ikk$ produces a bivariant $\K$-theory for nonseparable $C^*$-algebras (see Remark \ref{nonsep}) with (arguably) better formal properties than the counterpart in \cite{CunMeyRos}. The main advantage of our approach is the compact generation of $\h\pNSp[\cpt^{-1}]$. Our method is flexible enough to have a much wider scope of applicability; for instance, it can also be used to construct a bivariant $\K$-theory purely in the algebraic setting (see Remark \ref{AlgKtheory}). A bivariant $\K$-theory for {\em $C^*$-spaces} was constructed using ideas from motivic homotopy theory and model categories by {\O}stv{\ae}r \cite{Ost}; the precise relationship between our construction and that of \cite{Ost} has yet to be clearly understood.

In Section \ref{bkk} we study the bootstrap category $\uct$ in $\ikk$ and show that there is a purely algebraic description of the bootstrap category, i.e., there is an additive equivalence $(\h\uct)^\op\simeq D(\ZZ[u,u^{-1}])$ (see Theorem \ref{algDesc} and Remark \ref{nonAlgebraic}). The bivariant $\K$-theory groups of the noncommutative spaces belonging to $\uct$ satisfy a universal coefficient theorem (UCT) (see Proposition \ref{homuct}). They are computable in terms of $\K$-homology groups. The classical UCT in $\KK$-theory \cite{RosSch} expresses the bivariant $\K$-theory groups in terms of $\K$-theory. The category of separable $C^*$-algebras satisfying this form of UCT can be described as a (co)homological localization of Kasparov $\KK$-category \cite{CunMeyRos}. In subsection \ref{Kthuct} we generalize this idea to construct a $\K$-theoretic bootstrap category in $\ikk$ that truly generalizes the Rosenberg--Schochet UCT category to the setting of pointed noncommutative spaces.

The global structure of the stable $\infty$-category of noncommutative spectra $\pNSp$ appears to be quite difficult, since it contains the stable $\infty$-category of spectra $\Sp$ as a full subcategory. Thus it seems prudent to concentrate on certain subcategories that lie away from $\Sp$. The following diamond diagram of colocalizations of $\pNSp$ is illustrative. Note that we have carefully selected the colocalizations to arrive at the diamond shape; there are numerous other colocalizations arising from coidempotent objects of $\pNSp$ that have been omitted.

\beq \label{diamond}
\xymatrix{
&\pNSp[\cO_2^{-1}]\ar[ld]\ar[rd]\\
\pNSp[\cQ^{-1}] \ar[d] && \pNSp[(\cQ\prot\Oinf)^{-1}] \ar[d] \\
\pNSp[M_{2^\infty}^{-1}] \ar[rd] && \pNSp[\Oinf^{-1}] \ar[ld] \\
&\pNSp[\cZ^{-1}]
}
\eeq Actually we also have a sequence of colocalizations between $\pNSp[(\cQ\prot\Oinf)^{-1}]$ and $ \pNSp[\Oinf^{-1}]$: \beqn\;\pNSp[(\cQ\prot\Oinf)^{-1}]\hookrightarrow\cdots \hookrightarrow\pNSp[(M_{2^\infty}\prot M_{3^\infty}\prot\Oinf)^{-1}]\hookrightarrow \pNSp[(M_{2^\infty}\prot\Oinf)^{-1}]\hookrightarrow\pNSp[\Oinf^{-1}].\eeqn Here $M_{2^\infty}$, $M_{3^\infty}$, $M_{5^\infty}$, etc. are UHF algebras of infinite type and so are their $C^*$-tensor products. They are also examples of strongly self-absorbing $C^*$-algebras \cite{TomWin}. In Section \ref{Zcoloc} we analyse the colocalization of $\pNSp$ with respect to the Jiang--Su algebra $\cZ$, denoted by $\pNSp[\cZ^{-1}]$. In this case we fall short of an algebraic description of the bootstrap category. The hindrance is our lack of understanding of the graded endomorphism ring of the unit object in $\h\pNSp[\cZ^{-1}]$ (see Remark \ref{zalgebraization}). In stable homotopy theory one tries to understand the stable homotopy category via Bousfield localizations with respect to various spectra, since the localized categories are often more tractable and occasionally admit algebraic approximations. It is also important to understand the interrelationship between these localizations. Guided by such considerations we introduce two equivalence relations on $C^*$-algebras (see Definition \ref{Ceq} and Definition \ref{BouEq}) and analyse some examples (see Theorem \ref{ColocEx}). 

\medskip
\noindent
{\bf Notations and conventions:} Throughout this article $\prot$ will denote the maximal $C^*$-tensor product. All $C^*$-algebras are assumed to be separable unless otherwise stated. For any $\infty$-category $\cC$ we denote by $\h\cC$ its homotopy category. In the context of $\infty$-categories a functor (resp. limit or colimit) will implicitly mean an $\infty$-functor (resp. $\infty$-limit or $\infty$-colimit). There is a Yoneda embedding $j:\iCsep^\op\map\iNS$ and a separable $C^*$-algebra $A$ is viewed as a noncommutative space via $j(A)$. In the sequel for brevity we suppress $j$ from the notation. By compact (resp. compactly generated) we shall tacitly mean $\omega$-compact (resp. $\omega$-compactly generated). The triangulated category $\SHo$ stands for the suspension-stable homotopy category of $C^*$-algebras. In the sequel we often denote the full triangulated subcategory spanned by the (de)suspensions of {\em separable} $C^*$-algebras also by $\SHo$ and the difference will be clear from the context. We freely use the notation from the articles \cite{LurToposBook,LurHigAlg,MyNSH,MyNSHLoc}. For the benefit of the reader we enlist some important ones below:

\begin{enumerate}
  \item $\cSf$ = $\infty$-category of finite pointed spaces [Notation 1.4.2.5 of \cite{LurHigAlg}].
  \item $\S_*$ = $\infty$-category of pointed spaces [Notation 1.4.2.5 of \cite{LurHigAlg}]
 \item $\iCsep$ = $\infty$-category of separable $C^*$-algebras [Definition 2.2 of \cite{MyNSH}]
 \item $\iNS$ = $\infty$-category of pointed noncommutative spaces [Definition 2.13 of \cite{MyNSH}]
 \item $\Sp$ = stable $\infty$-category of spectra [Definition 1.4.3.1 of \cite{LurHigAlg}]
 \item $\iNSp$ = stable $\infty$-category of noncommutative spectra [Definition 2.19 of \cite{MyNSH}]
\end{enumerate}

\medskip
\noindent
{\bf Acknowledgements:} The author would like to thank T. Nikolaus for helpful discussions. The author has benefited from the hospitality of Max Planck Institute for Mathematics, Bonn and Hausdorff Research Institute for Mathematics, Bonn under various stages of development of this project. The author is also extremely grateful to the anonymous referee for the constructive report suggesting several improvements.

\section{A generalization of the suspension-stable homotopy category of $\Csep$} \label{SepCstab}
In Section 8.5 of \cite{CunMeyRos} the authors constructed a suspension-stable homotopy category of all $C^*$-algebras denoted by $\SHo$. Although the construction in \cite{CunMeyRos} works for all (possibly nonseparable) $C^*$-algebras, we may (and later on we shall) restrict our attention to separable $C^*$-algebras. The aim in this section is to construct a variant of the stable $\infty$-category of noncommutative spectra, denoted by $\pNSp$, such that $\h\pNSp^\op$ and $\SHo$ agree when restricted to separable $C^*$-algebras. The triangulated category $\h\pNSp$ is actually quite large (it is compactly generated) so that it is able to accommodate nonseparable $C^*$-algebras. Our construction will differ from that of \cite{CunMeyRos} both in methodology and end result for genuinely nonseparable $C^*$-algebras. The triangulated category $\h\pNSp$ has better formal properties (see Remark 3.1 of \cite{MyNSH}). Recall that a presentable symmetric monoidal $\infty$-category is {\em closed} if the tensor product preserves colimits separately in each variable. For the benefit of the reader we record a couple of results from \cite{MyNSHLoc}.

\begin{prop} \label{tensor}
The maximal $C^*$-tensor product on $\Csep$ leads to the following:
\begin{enumerate}
 \item  The $\infty$-categories $\iCsep$ and $\iNS$ are symmetric monoidal. Moreover, the presentable $\infty$-category $\iNS$ is closed symmetric monoidal and the Yoneda functor $j:\iCsep^\op\functor\iNS$ is symmetric monoidal.
 
 \item There is a closed symmetric monoidal stable $\infty$-category $\Sp(\iNS)$, such that the stabilization functor $\Sigma^\infty:\iNS\functor\Sp(\iNS)$ is symmetric monoidal.
\end{enumerate}
\end{prop}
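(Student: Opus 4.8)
The plan, carried out in detail in \cite{MyNSHLoc}, is to transport the classical symmetric monoidal structure furnished by the maximal $C^*$-tensor product $\prot$ successively through the three universal constructions producing $\iCsep$, $\iNS$, and $\Sp(\iNS)$, verifying the relevant compatibility at each stage. First I would record that $\prot$ is associative, symmetric and unital (unit $\CC$) and that $A\prot B$ is separable whenever $A$ and $B$ are, so that it defines a symmetric monoidal structure on the ordinary category of separable $C^*$-algebras; since $-\prot B$ preserves homotopy equivalences (using that $C([0,1],B)\cong C[0,1]\prot B$, as $C[0,1]$ is nuclear), this descends to a symmetric monoidal structure on the $\infty$-category $\iCsep$ --- concretely, a coCartesian fibration $\iCsep^\otimes\functor\Fin_*$ --- and likewise on $\iCsep^\op$. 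Day convolution (Section 4.8.1 of \cite{LurHigAlg}) then equips the presheaf $\infty$-category $\mathcal{P}(\iCsep^\op)$, and its pointed variant $\mathcal{P}(\iCsep^\op)_*$, with presentably --- hence closed --- symmetric monoidal structures for which the (pointed) Yoneda embedding out of $\iCsep^\op$ is symmetric monoidal, by the defining universal property of Day convolution.

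The substantive step, and the one I expect to be the main obstacle, is to check that the left Bousfield localization $L\colon\mathcal{P}(\iCsep^\op)_*\functor\iNS$ defining pointed noncommutative spaces (Definition 2.13 of \cite{MyNSH}) is compatible with this monoidal structure. By Proposition 2.2.1.9 of \cite{LurHigAlg} it suffices to show that the strongly saturated class $\overline{S}$ generated by the defining set $S$ of $L$-equivalences is stable under $-\otimes j(B)$ for every $B\in\iCsep$; stability under $-\otimes X$ for an arbitrary object $X$ then follows automatically, because $\overline{S}$ is closed under colimits and every object of the presheaf category is a colimit of representables $j(A)$. Now each member of $S$ is the image under $j$ of a structural map of separable $C^*$-algebras: an evaluation $C([0,1],A)\functor A$, a stabilization map $A\functor A\otimes\cpt$, a suspension identification, or --- for a $C^*$-extension $I\functor A\xrightarrow{f}A/I$ --- the canonical map $j(I)\functor\Cone(j(f))$ into the mapping cone of $j(f)$; and applying $-\prot B$ to such a map returns a map of exactly the same shape (the last one being the map attached to $I\prot B\functor A\prot B\functor(A/I)\prot B$). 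The decisive point is that this last target is \emph{still} a $C^*$-extension, i.e.\ that $-\prot B$ is an exact functor on $C^*$-algebras --- which is precisely why one works with the maximal rather than the minimal tensor product --- while the remaining identifications reduce to $C_0(\RR)\prot B\cong C_0(\RR,B)$ and $\cpt\prot B\cong\cpt\otimes_{\min}B$, valid since $C_0(\RR)$ and $\cpt$ are nuclear. Thus $s\otimes j(B)\in\overline{S}$ for every $s\in S$, so $\iNS$ inherits a symmetric monoidal structure with $L$ symmetric monoidal; its tensor product preserves colimits separately in each variable (as Day convolution and $L$ both do), whence $\iNS$ is presentable closed symmetric monoidal and $j=L\circ(\textup{Yoneda})\colon\iCsep^\op\functor\iNS$ is symmetric monoidal.

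Finally, for (2) I would argue formally via idempotent algebras in $\Prl$. By (1) the presentable, pointed $\infty$-category $\iNS$ is a commutative algebra object of $(\Prl,\otimes)$; the $\infty$-category of spectra $\Sp$ is an idempotent commutative algebra object of $(\Prl,\otimes)$, and for a pointed presentable $\mathcal{C}$ one has $\Sp(\mathcal{C})\simeq\mathcal{C}\otimes\Sp$ with $\Sigma^\infty$ the corresponding smashing localization (Section 4.8.2 of \cite{LurHigAlg}). Hence $\Sp(\iNS)$ acquires a presentably --- hence closed --- symmetric monoidal structure for which $\Sigma^\infty\colon\iNS\functor\Sp(\iNS)$ is symmetric monoidal, and stability of $\Sp(\iNS)$ is automatic, as it is a stabilization.
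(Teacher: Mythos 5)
The paper itself offers no proof of this proposition: it is explicitly ``recorded'' from \cite{MyNSHLoc}, so there is nothing internal to compare your argument against line by line. Your overall architecture --- descend $\prot$ to $\iCsep$ using nuclearity of $C[0,1]$, transport it to a presheaf-type category by Day convolution so that the Yoneda functor is monoidal, check compatibility with the localization defining $\iNS$, and then obtain $\Sp(\iNS)\simeq\iNS\otimes\Sp$ from the idempotent commutative algebra $\Sp$ in $\Prl$ --- is the standard and plausible route, and part (2) as you present it is correct.

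The genuine problem is in your ``substantive step'': you have misidentified the localization that defines $\iNS$. In this paper $\iNS$ is $\Ind_\omega(\iCsep^\op)$ (see the proof of Theorem \ref{NSp} and the discussion of compact generation in Proposition \ref{Cgen}); no evaluation maps, no suspension identifications, no cone-comparison maps for extensions, and emphatically no stabilization maps $A\functor A\prot\cpt$ are inverted at that stage. If $\iNS$ inverted $A\functor A\prot\cpt$, the object $j(\cpt)$ would be the tensor unit and the entire content of Section \ref{KKcoloc} --- where $\pNSp[\cpt^{-1}]$ is a proper smashing colocalization whose homotopy category recovers $\KKcat$, as opposed to $\h\pNSp^\op$ which recovers $\SHo$ --- would collapse. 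Likewise the extension/cone maps are inverted only later, via the sets $T_0'$ and $S_0'$ at the level of $\Sp(\iNS)$, to produce $\pNSp$; that is exactly where the exactness of $-\prot B$ that you invoke is actually needed, and the paper carries out that compatibility check in the proof of Theorem \ref{SMNSp}. So the verification you perform is aimed at the wrong class of maps. What actually has to be checked for part (1) is that Day convolution on $\mathcal{P}(\iCsep^\op)$ restricts to (or is compatible with the accessible localization onto) $\Ind_\omega(\iCsep^\op)$, i.e.\ that $\prot$ interacts correctly with the finite colimits in $\iCsep^\op$ that Ind-completion freely extends to filtered ones --- a different, and in fact more elementary, verification than the one you give. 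As written, your argument proves monoidality of a localization that is not $\iNS$, so the middle third of the proof does not establish the stated claim.
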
 Hence there is a symmetric monoidal functor $\St:\iCsep^\op\functor\Sp(\iNS)$ that arises as a composition of two symmetric monoidal functors $\iCsep^\op\overset{j}{\hookrightarrow}\iNS\overset{\Sigma^\infty}{\map}\Sp(\iNS)$. Recall that an extension of $C^*$-algebras $0\map A\map B\map C\map 0$ is called {\em semisplit} if the surjective $*$-homomorphism $B\map C$ admits a completely positive contractive section. Let $\cC$ be any compactly generated stable $\infty$-category and let $V$ be a set of compact objects of $\cC$. Then $\langle V\rangle$ denotes the smallest full stable $\infty$-subcategory of $\cC$ generated the translations (in both directions) and cofibers of the objects of $V$. Consider the collection of morphisms in $\iCsep^\op$ that can be chosen to be a small set $$T'_0=\{ \C(f)\map\ker(f) \,|\, \text{ $0\map \ker(f)\map B\overset{f}{\map} C\map 0$ semisplit extension} \}$$ (see Remark 2.4 of \cite{MyNSH}). We set $$S'_0=\{\St(\theta)\,|\, \theta\in T'_0\},$$ which is also a small set of morphisms in $\Sp(\iNS)$. This defines an exact localization $L_{S'}:\Sp(\iNS)\map S'^{-1}\Sp(\iNS)$ as follows: Set $V=\{\textup{cone}(\theta)\,|\,\theta\in S'_0\}$ and let $\cA=\langle V\rangle$ denote the stable $\infty$-subcategory of $\Sp(\iNS)$ generated by $V$. Note that $\cA$ is a subcategory of the compact objects of $\Sp(\iNS)$. Hence $\Ind_\omega(\cA)$ is a compactly generated stable $\infty$-subcategory of $\Sp(\iNS)$ and we let $S'$ denote the collection of maps in $\Sp(\iNS)$, whose cones lie in $\Ind_\omega(\cA)$. The collection of maps $S'$ defines an accessible localization $L_{S'}:\Sp(\iNS)\functor S'^{-1}\Sp(\iNS)$, which is the desired one (for the details see Section 2.4 of \cite{MyNSH}). By construction there is a short exact sequence of stable presentable $\infty$-categories: $$\Ind_\omega(\cA)\map\Sp(\iNS)\map S'^{-1}\Sp(\iNS).$$

\begin{defn}
Due to the obvious analogy with the $\infty$-category of noncommutative spectra $\iNSp = S^{-1}\Sp(\iNS)$, we denote the $\infty$-category $S'^{-1}\Sp(\iNSp)$ by $\pNSp$. The stable $\infty$-category $\pNSp$ is yet another candidate for noncommutative spectra as we are shortly going to demonstrate (see Theorem \ref{NSp} below).
\end{defn}

\begin{rem}
Evidently, $S'_0\subset S_0$ and $S'\subset S$ from which we obtain a commutative diagram of stable presentable $\infty$-categories (up to equivalence)

\beqn
\xymatrix{
\Sp(\iNS)\ar[rr]^{L_{S'}}\ar[rrd]_{L_S} && S'^{-1}\Sp(\iNS)=\pNSp\ar[d]\\
&& S^{-1}\Sp(\iNS)=\iNSp.
}
\eeqn 
\end{rem}

\begin{thm} \label{SMNSp}
 There is a colimit preserving symmetric monoidal functor $\Sigmat= L_{S'}\circ\Sigma^\infty:\iNS\functor\pNSp$ between presentable closed symmetric monoidal $\infty$-categories.
\end{thm}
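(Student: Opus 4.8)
The strategy is to assemble $\Sigmat$ from the two symmetric monoidal functors already in hand, namely $\Sigma^\infty:\iNS\to\Sp(\iNS)$ from Proposition \ref{tensor}(2) and the localization $L_{S'}:\Sp(\iNS)\to\pNSp$, and then to verify that the composite is colimit preserving and symmetric monoidal, and that source and target are presentable closed symmetric monoidal. Since $\Sigma^\infty$ is symmetric monoidal and colimit preserving by construction, everything reduces to checking that $L_{S'}$ carries a canonical symmetric monoidal structure for which it is colimit preserving, and that $\pNSp$ inherits a closed symmetric monoidal structure. The hard part is the monoidality of the localization.

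First I would recall that $L_{S'}$ is a smashing (accessible) localization: by construction its kernel is $\Ind_\omega(\cA)=\Ind_\omega(\langle V\rangle)$ with $V=\{\Cone(\theta)\mid\theta\in S'_0\}$, a set of compact objects, so $L_{S'}$ is a localization of the compactly generated stable $\infty$-category $\Sp(\iNS)$ at a set of morphisms. The key point is that the localizing subcategory $\Ind_\omega(\cA)$ is a tensor ideal: one must check that for $X\in\Ind_\omega(\cA)$ and any $Y\in\Sp(\iNS)$ the tensor product $X\otimes Y$ again lies in $\Ind_\omega(\cA)$. Because $\otimes$ preserves colimits in each variable (closedness, Proposition \ref{tensor}), it suffices to verify this on generators, i.e. for $X=\Cone(\theta)$ with $\theta\in S'_0$ and $Y$ a compact generator; and since the generators of $\Sp(\iNS)$ come from suspension spectra of separable $C^*$-algebras and the maps in $S'_0$ are of the form $\St(\theta)$ with $\theta\in T'_0$ coming from semisplit extensions, one reduces to the stability of the class of semisplit extensions under maximal $C^*$-tensor product with a fixed separable $C^*$-algebra — a standard fact about completely positive contractive sections. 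Once $\Ind_\omega(\cA)$ is seen to be a $\otimes$-ideal, the general machinery (e.g. \cite[Proposition 2.2.1.9]{LurHigAlg}, or the treatment of localizations of symmetric monoidal $\infty$-categories) endows $S'^{-1}\Sp(\iNS)=\pNSp$ with a symmetric monoidal structure making $L_{S'}$ symmetric monoidal, and the localization functor is automatically colimit preserving since it is a left adjoint.

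Next I would observe that $\pNSp$ is presentable: it is an accessible localization of the presentable $\infty$-category $\Sp(\iNS)$, hence presentable, and it is stable since the localization is exact (the kernel is a stable subcategory). That the induced symmetric monoidal structure is \emph{closed} — i.e. $\otimes$ preserves colimits separately in each variable — follows because $\pNSp$ is presentable and the tensor product, being the localization of a closed one, still commutes with colimits in each variable; equivalently, by the adjoint functor theorem each $X\otimes(-)$ admits a right adjoint. Finally, $\iNS$ is presentable closed symmetric monoidal by Proposition \ref{tensor}(1). Composing, $\Sigmat=L_{S'}\circ\Sigma^\infty$ is a colimit preserving symmetric monoidal functor between presentable closed symmetric monoidal $\infty$-categories, as claimed.

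I expect the main obstacle to be precisely the verification that $\Ind_\omega(\cA)$ is a tensor ideal, or more precisely that one can choose the generating set $V$ of $\cA$ to be closed (up to the operations defining $\langle-\rangle$) under tensoring with compact generators — this is where the specific nature of the set $T'_0$ of morphisms coming from semisplit extensions, together with the behaviour of completely positive contractive sections under $\prot$, really enters. Everything else is an application of the formal theory of localizations of symmetric monoidal presentable $\infty$-categories together with the results quoted from \cite{MyNSHLoc}.
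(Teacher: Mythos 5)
Your proposal is correct and follows essentially the same route as the paper: both reduce the theorem to showing that the localization $L_{S'}$ is compatible with the tensor product, i.e.\ that $\Ind_\omega(\cA)$ is a $\otimes$-ideal, then reduce to generators using compact generation and the fact that $\otimes$ preserves colimits in each variable, and finally invoke the stability of semisplit extensions under $\prot$ with a fixed separable $C^*$-algebra (the paper phrases this last step via the fully faithful symmetric monoidal functor $\St$ and the exactness of $-\prot(A,k)$ on $\hosc[\Sigma^{-1}]$). The paper carries out the reduction to compact objects more explicitly, via ladder diagrams of cofiber sequences, but the underlying argument is exactly the one you describe.
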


\begin{proof}
Thanks to Proposition \ref{tensor} part (2) the functor $\Sigma^\infty:\iNS\functor\Sp(\iNS)$ is a colimit preserving symmetric monoidal functor between presentable closed symmetric monoidal $\infty$-categories. Being an accessible localization the functor $L_{S'}:\Sp(\iNS)\functor {S'}^{-1}\Sp(\iNS)=\pNSp$ is a colimit preserving functor between presentable $\infty$-categories. It remains to show that the functor $L_{S'}$ is also symmetric monoidal between closed symmetric monoidal $\infty$-categories. 

Let $f: X\map Y$ be a local equivalence, i.e., $L_{S'}(f)$ is an equivalence in ${S'}^{-1}\Sp(\iNS)$. By construction this means that the cofiber of $f$ lies in $\Ind_\omega(\cA)$, in other words, there is a cofiber sequence $X\overset{f}{\map}Y\overset{g}{\map}Z$ in $\Sp(\iNS)$ with $Z\in\Ind_\omega(\cA)$.  For any $Q\in\Sp(\iNS)$ the induced diagram $X\otimes Q\overset{f\otimes\id}{\map} Y\otimes Q\overset{g\otimes\id}{\map} Z\otimes Q$ is again a cofiber sequence. If we can show that $Z\otimes Q\in\Ind_\omega(\cA)$ then it would imply that $f\otimes\id:X\otimes Q\map Y\otimes Q$ is also a local equivalence. Hence by Proposition 2.2.1.9 and Example 2.2.1.7 of \cite{LurHigAlg} (see also Lemma 3.4 of \cite{GepGroNik}) this would prove that the localization is symmetric monoidal (or compatible with it). It would follow that $\pNSp$ is closed symmetric monoidal (see Remark 3.5 of \cite{GepGroNik}) and we shall have completed the proof.
 
To this end write $Z=\colim_\alpha Z_\alpha$ with $Z_\alpha\in\cA$ and set $Y_\alpha = Y\times_Z Z_\alpha$ and consider the map of cofiber sequences 

\beqn
\xymatrix{X_\alpha \ar[r]^{f_\alpha}\ar[d] & Y_\alpha \ar[r]^{g_\alpha} \ar[d] & Z_\alpha \ar[d]\\
X\ar[r]^f & Y \ar[r]^g & Z,}
\eeqn where $f_\alpha:X_\alpha\map Y_\alpha$ is the fiber of $g_\alpha$. If $V$ is a set of compact objects in a compactly generated stable $\infty$-category $\cC$, then $\langle V\rangle$ denotes the smallest stable $\infty$-subcatgory of $\cC$ generated by $V$. Observe that $\Sp(\iNS)$ is compactly generated by the objects of $\langle\St(\iCsep^\op)\rangle$. Now $Y_\alpha$ need not be compact but we may write $Y_\alpha = \colim_\beta Y_{\alpha\beta}$ with each $Y_{\alpha\beta}\in\langle\St(\iCsep^\op)\rangle$. Set $X_{\alpha\beta} = X_\alpha \times_{Y_\alpha} Y_{\alpha\beta}$ and we obtain a ladder diagram of cofiber sequences 

\beqn
\xymatrix{
X_{\alpha\beta}\ar[r]^{f_{\alpha\beta}}\ar[d] & Y_{\alpha\beta}\ar[d]\ar[r]^{g_{\alpha\beta}} & Z_{\alpha\beta} \ar[d] \\
X_\alpha \ar[r]^{f_\alpha}\ar[d] & Y_\alpha \ar[r]^{g_\alpha} \ar[d] & Z_\alpha \ar[d]\\
X\ar[r]^f & Y \ar[r]^g & Z,}
\eeqn The top left square is by construction a pullback square and since we are in a stable $\infty$-category it is also a pushout square. Thus we have equivalences $Z_{\alpha\beta}\overset{\sim}{\map} Z_\alpha$ and $X_\alpha\overset{\sim}{\map} X$. 

Let $\Sp(\iNS)^c$ denote the stable $\infty$-category of compact object in $\Sp(\iNS)$. We thus have a cofiber sequence \beq \label{Cseq} X_{\alpha\beta}\overset{f_{\alpha\beta}}{\map} Y_{\alpha\beta}\overset{g_{\alpha\beta}}{\map} Z_{\alpha\beta}\eeq in $\Sp(\iNS)^c$ with $Z_{\alpha\beta}\in {\cA}$. Moreover, $$\colim_{\alpha\beta}X_{\alpha\beta}\map\colim_{\alpha\beta} Y_{\alpha\beta}\map\colim_{\alpha\beta} Z_{\alpha\beta}$$ is equivalent to the cofiber sequence $X\overset{f}{\map} Y\overset{g}{\map} Z$. Now we write $Q=\colim_\gamma Q_\gamma$ with each $Q_\gamma$ compact in $\Sp(\iNS)$. Using the fact that $\otimes$ commutes with colimits we find that the cofiber sequence $X\otimes Q\overset{f\otimes\id}{\map} Y\otimes Q\overset{g\otimes\id}{\map} Z\otimes Q$ is equivalent to \beq \label{Cseq2} \colim_{\alpha\beta\gamma} (X_{\alpha\beta}\otimes Q_\gamma)\map\colim_{\alpha\beta\gamma} (Y_{\alpha\beta}\otimes Q_\gamma)\map\colim_{\alpha\beta\gamma} (Z_{\alpha\beta}\otimes Q_\gamma).\eeq Our aim is to show that $(Z_{\alpha\beta}\otimes Q_\gamma)\in\cA$. From Proposition 2.17 of \cite{MyNSH} we know that there is a fully faithful exact functor $\St=\Pi^\op:\hosc[\Sigma^{-1}]^\op\map\h\Sp(\iNS)$, whose image lies inside $\h\Sp(\iNS)^c$. The functor $\St$ is also symmetric monoidal with respect to $\prot$ on $\hosc[\Sigma^{-1}]$. The cofiber sequence \eqref{Cseq} gives rise to an exact triangle in the triangulated category $\h\Sp(\iNS)^c$; by construction we may assume that it is of the form $X_{\alpha\beta}\map \St(B,m)\map\St(C,n)\map \Sigma X_{\alpha\beta}$ with $(B,m),(C,n)\in\hosc[\Sigma^{-1}]^\op$. Using the fully faithful exact functor $\St$ we may view the exact triangle to be $ (B,m)\overset{h}{\leftarrow} (C,n)\leftarrow \C(h)\leftarrow \Sigma (B,m)$ with $\St(\C(h))\simeq X_{\alpha\beta}$ in $\h\Sp(\iNS)$. There is also an exact triangle associated with the cofiber sequence $X_{\alpha\beta}\otimes Q_\gamma\map Y_{\alpha\beta}\otimes Q_\gamma\map Z_{\alpha\beta}\otimes Q_\gamma$. If $Q_\gamma =\St(A,k)$, then using the exactness of $\prot$ in the triangulated category $\hosc[\Sigma^{-1}]$ we may write this exact triangle as $$(B,m)\prot (A,k)\overset{h\prot\id}{\leftarrow} (C,n)\prot (A,k)\leftarrow \C(h)\prot (A,k)\leftarrow \Sigma (B,m)\prot (A,k)$$ with $\St(C,n)\prot (A,k))\simeq \St(C,n))\otimes\St(A,k) \simeq Z_{\alpha\beta}\otimes Q_\gamma$. We know that $(C,n)$ belongs to the triangulated subcategory $\mathtt{T}$ of $\hosc[\Sigma^{-1}]$ generated by $\{\textup{cone}(\theta)\,|\, \theta\in T_0\}$ so that $\St(\mathtt{T}) \simeq \h\cA$. Using the fact that $-\prot (A,k)$ is an exact functor on $\hosc[\Sigma^{-1}]$ it is clear $(C,n)\prot (A,k)$ belongs to $\mathtt{T}$ as well. Thus $\St(C,n)\prot (A,k))\simeq Z_{\alpha\beta}\otimes Q_\gamma$ belongs to $\cA$ and hence $\colim_{\alpha\beta\gamma} Z_{\alpha\beta}\otimes Q_\gamma\in \Ind_\omega(\cA)$, i.e., $f\otimes\id: X\otimes Q\map Y\otimes Q$ is a local equivalence. 
\end{proof}

\begin{rem}
Let $T'$ (resp. $T$) denote the strongly saturated collections of morphisms in $\iNS$ generated by $j(T'_0)$ (resp. $j(T_0)$), where $$T_0=\{ \C(f)\map\ker(f) \,|\, \text{ $0\map \ker(f)\map B\overset{f}{\map} C\map 0$ any extension} \}.$$ One can also construct an $\infty$-category of noncommutative spaces, which comes equipped with a canonical functor $T'^{-1}\iNS\functor T^{-1}\iNS$. Moreover, the suspension spectrum functor $\Sigmat$ factors as $\iNS\functor T'^{-1}\iNS\functor\pNSp$.
\end{rem}

\begin{prop} \label{Cgen}
 The stable $\infty$-categories $\iNSp$ and $\pNSp$ are compactly generated.
\end{prop}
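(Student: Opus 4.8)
The plan is to derive compact generation of both $\iNSp$ and $\pNSp$ from two inputs: that $\Sp(\iNS)$ is a compactly generated stable $\infty$-category, and that a Bousfield localization of a compactly generated stable $\infty$-category away from a localizing subcategory generated by a \emph{set of compact objects} is again compactly generated (the $\infty$-categorical form of the Thomason--Neeman localization theorem; see Section 2.4 of \cite{MyNSH}). Since $\iNSp = S^{-1}\Sp(\iNS)$ is treated in \cite{MyNSH}, I would focus on $\pNSp = S'^{-1}\Sp(\iNS)$ and run the identical argument: it applies verbatim because $S'$ is constructed by the same recipe as $S$, starting from a small set $S'_0$ of morphisms whose cones generate the small subcategory $\cA = \langle V\rangle$ of the compact objects of $\Sp(\iNS)$.

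In detail the steps are as follows. First I would recall that $\Sp(\iNS)$ is compactly generated: it is presentable, and it is generated under colimits by the small set of compact objects $\langle\St(\iCsep^\op)\rangle$ (already used in the proof of Theorem \ref{SMNSp}; cf. Proposition 2.17 of \cite{MyNSH}). Second, since $\cA$ is a small full stable subcategory of $\Sp(\iNS)^\omega$, the acyclic subcategory $\Ind_\omega(\cA)$ of the localization $L_{S'}$ is the localizing subcategory generated by a set of compact objects of $\Sp(\iNS)$, hence closed under filtered colimits; consequently the $S'$-local objects are exactly the $X$ with $\Map(A,X)\simeq\ast$ for $A\in\cA$, so the subcategory of local objects is closed under filtered colimits, and therefore $L_{S'}:\Sp(\iNS)\functor\pNSp$ sends compact objects to compact objects. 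Third, $\pNSp$ is presentable and stable, being an exact accessible localization of a presentable stable $\infty$-category. Fourth, I would invoke the localization theorem to conclude that $\pNSp$ is compactly generated with set of compact generators $\{\,L_{S'}(x)\mid x\in\langle\St(\iCsep^\op)\rangle\,\}$: these are compact by the second step and they generate because $L_{S'}$ is a localization and $\langle\St(\iCsep^\op)\rangle$ generates $\Sp(\iNS)$. Finally, the same four steps with $S$ replacing $S'$ recover the statement for $\iNSp$.

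The only point requiring genuine verification in our situation --- everything else being formal or quotable --- is that the acyclics $\Ind_\omega(\cA)$ are generated by compact objects of $\Sp(\iNS)$ and hence closed under filtered colimits; but this is precisely what the construction of $S'$ (passing from $S'_0$ to $V=\{\textup{cone}(\theta)\}$, then to $\cA=\langle V\rangle$, then to $\Ind_\omega(\cA)$) was designed to guarantee, so I anticipate no real obstacle. Identifying the compact generators of the quotient as the images of those of $\Sp(\iNS)$, and checking presentability and stability of $\pNSp$, are then routine.
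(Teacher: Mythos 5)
Your proposal is correct and follows essentially the same route as the paper: compact generation of $\Sp(\iNS)$ plus the fact that the acyclics $\Ind_\omega(\cA)$ are generated by compact objects, then the Thomason--Neeman localization theorem (the paper quotes it as Theorem 7.2.1 (2) of Krause's survey, at the level of the triangulated homotopy categories, before lifting back to the stable $\infty$-category). The only cosmetic difference is that you argue directly $\infty$-categorically while the paper passes through $\h\pNSp$; both are fine.
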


\begin{proof}
It is shown in Lemma 2.22 of \cite{MyNSH} that $\iNSp$ is compactly generated. The proof of the compact generation of $\pNSp$ is similar. Indeed, by construction $\iNS$ is compactly generated and hence so is its stabilization $\Sp(\iNS)$ (see Propostion 1.4.3.7 of \cite{LurHigAlg}). Therefore, the accessible localization $L_{S'}:\Sp(\iNS)\functor\pNSp$ is also compactly generated. Note that $S'$ is a strongly saturated collection of morphisms generated by a small set, such that the domain and the codomain of every morphism in this small set is compact. One can also argue as follows: by construction there is a short exact sequence of stable presentable $\infty$-categories $\Ind_\omega(\cA)\map \Sp(\iNS)\map\pNSp$, which induces a short exact sequence of triangulated categories $\h\Ind_\omega(\cA)\map \h\Sp(\iNS)\map\h\pNSp$. We know that $\h\Sp(\iNS)$ is compactly generated and since the objects of $\cA$ are compact in $\Sp(\iNS)$ so is $\h\Ind_\omega(\cA)$. From Theorem 7.2.1 (2) of \cite{KraLoc} we deduce that $\h\pNSp$ is a compactly generated triangulated category. Since $\pNSp$ is a stable $\infty$-category, it must itself be compactly generated (see Remark 1.4.4.3 of \cite{LurHigAlg}).
\end{proof}

\begin{rem} \label{BrownRepr}
 Using arguments similar to Theorem 2.23 and Remark 2.25 of \cite{MyNSH} one can show that both $\h\pNSp$ and $(\h\pNSp)^\op$ satisfy Brown representability.
\end{rem}

Now we compare our construction with $\SHo$ (restricting our attention to separable $C^*$-algebras). Recall that the objects in $\SHo$ are pairs $(A,n)$ with $A\in\Csep$ and $n\in\ZZ$. Its morphisms are defined as $$\SHo( (A,n),(B,m)):= {\dlim}_k\, [ J_{cpc}^{n+k} A, \Sigma^{m+k} B],$$ where the functor $J_{cpc} A$ is defined by the short exact sequence $0\map J_{cpc} A\map T_{cpc} A \map A\map 0$ (see Section 8.5 of \cite{CunMeyRos} for the details). There is a composite functor $\iCsep^\op\overset{j}{\map}\iNS\overset{\Sigma^\infty}{\map} \Sp(\iNS),$ whose opposite functor is denoted by $\Pi:\iCsep\map \Sp(\iNS)^\op$ and another composite functor $\iCsep^\op\overset{j}{\map}\iNS\overset{\Sigmat}{\map} \pNSp,$ whose opposite functor is denoted by $\pi:\iCsep\map \pNSp^\op$.

\begin{prop} \label{Sho}
There is a fully faithful functor $\Theta:\SHo\functor\h \pNSp^\op$ induced by the functor $\pi:\iCsep\map \pNSp^\op$; in particular, for $A,B\in\Csep$ there is a natural isomorphism $$\SHo(A,B)\cong\h \pNSp^\op(\pi(A),\pi(B)).$$
\end{prop}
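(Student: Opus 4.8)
The plan is to exhibit the functor $\Theta$ as a composition of $\pi$ with a comparison between the two localized mapping spaces, and to reduce the fully faithfulness statement to a computation that has essentially already been done in \cite{MyNSH} for $\iNSp$. First I would recall from Proposition 2.17 of \cite{MyNSH} (invoked in the proof of Theorem \ref{SMNSp}) that there is a fully faithful exact functor $\hosc[\Sigma^{-1}]^\op\map\h\Sp(\iNS)$, landing in the compact objects, and that on the level of triangulated categories the localization $\h\Sp(\iNS)\map\h\pNSp$ corresponds to inverting precisely the maps whose cones lie in the subcategory $\langle V\rangle$ generated by $\{\textup{cone}(\theta)\mid\theta\in S'_0\}$. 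The key point is that, under the fully faithful embedding $\St$, the set $S'_0$ corresponds to $T'_0$, i.e.\ to the maps $\C(f)\map\ker(f)$ coming from semisplit extensions. So the comparison of $\SHo$ with $\h\pNSp^\op$ becomes a comparison between the Cuntz--Meyer--Rosenberg construction (inverting the Cuntz cone maps $J_{cpc}^{n+k}A\map\Sigma^{m+k}B$ after stabilization by $J_{cpc}$) and inverting the $C^*$-stable homotopy category at the semisplit mapping cone maps.

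The heart of the matter is therefore to show that, for separable $A,B$,
\[
{\dlim}_k\,[\,J_{cpc}^{n+k}A,\ \Sigma^{m+k}B\,]\ \cong\ \h\pNSp^\op(\pi(A),\pi(B))
\]
naturally. I would attack this in two steps. Step one: identify $\h\pNSp^\op(\pi(A),\pi(B))$, restricted to separable arguments, with a filtered colimit of homotopy classes of maps in $\hosc[\Sigma^{-1}]$. Since $\pNSp$ is obtained from $\Sp(\iNS)$ by inverting the strongly saturated class generated by the (compact) maps in $S'_0$, and $\St$ is fully faithful with image in the compact objects, a standard calculus-of-fractions / cofinality argument (of the type already used in \S2.4 of \cite{MyNSH} to identify $\h\iNSp$ with a localization of $\SHo$) shows that $\Hom$ in the localized category between images of separable $C^*$-algebras is computed as a colimit over the relevant diagram of mapping cones of the $J_{cpc}$-type maps, i.e.\ over successive applications of $J_{cpc}$ and $\Sigma$. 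Step two: this colimit is by inspection exactly the CMR formula defining $\SHo((A,n),(B,m))$, because the defining extension $0\map J_{cpc}A\map T_{cpc}A\map A\map 0$ is semisplit (it is the universal one, so its class appears in $T'_0$), so that the Cuntz cone map $J_{cpc}^{n+k}A\to\Sigma^{m+k}B$ is inverted in $\pNSp^\op$ in precisely the same way it is ``inverted'' in the definition of $\SHo$. Naturality in $A$ and $B$, and compatibility with the shifts $n\mapsto n+1$, gives the functor $\Theta$; full faithfulness is then the isomorphism just displayed; the source $\SHo$ here means the full subcategory spanned by (de)suspensions of separable $C^*$-algebras, as stipulated in the conventions.

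Concretely I would organize the write-up as: (i) recall the localization description of $\h\pNSp$ from the construction and the embedding $\St$; (ii) note $\St(T'_0)=S'_0$ and hence that the maps being inverted match on both sides; (iii) quote the already-proven analogue for $\iNSp$ (the statement that $\h\iNSp^\op$ agrees with a cohomological localization of $\SHo$, from \cite{MyNSH}), observing that the $\iNSp$ case inverts the \emph{larger} class built from $T_0$ while $\pNSp$ inverts the sub-class $T'_0$ of semisplit extensions, which is exactly the class used by CMR to define $\SHo$; (iv) conclude that no further localization is needed, so $\SHo$ embeds fully faithfully rather than mapping to a proper localization. The main obstacle I anticipate is bookkeeping the filtered-colimit identification in step (i)--(ii): one must check that the diagram over which one takes the colimit in $\h\pNSp^\op(\pi(A),\pi(B))$ is cofinal with the diagram $k\mapsto[J_{cpc}^{n+k}A,\Sigma^{m+k}B]$ used by CMR, i.e.\ that iterating mapping cones of semisplit extensions reproduces exactly the Cuntz tower $J_{cpc}^{\bullet}$ and no more. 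This is plausible because $J_{cpc}A$ is literally the kernel in the universal semisplit extension and $\Sigma$ is the shift, so the two towers should be mutually cofinal; but verifying the cofinality carefully (including that the semisplitness hypothesis is exactly what makes the relevant extensions usable on both sides) is where the real work lies. Everything else — exactness, symmetric monoidality, the existence of $\St$ — is available from earlier in the paper and from \cite{MyNSH,CunMeyRos}.
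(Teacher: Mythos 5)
Your proposal correctly identifies the two structural ingredients — the fully faithful embedding $\St=\Pi^\op$ of $\hosc[\Sigma^{-1}]^\op$ into the compact objects of $\Sp(\iNS)$, and the matching of $S'_0$ with $T'_0$ — but the route you take for full faithfulness is not the paper's, and the step you defer as ``bookkeeping'' is in fact the entire difficulty. You propose to compute $\h\pNSp^\op(\pi(A),\pi(B))$ directly as a filtered colimit over the Cuntz tower $J_{cpc}^{\bullet}$ and match it term by term with the CMR formula. There is no ``standard calculus-of-fractions / cofinality argument'' that does this: morphisms in a Verdier quotient are roofs, not colimits over a preassigned tower, and the assertion that the tower $k\mapsto J_{cpc}^{k}A$ is cofinal among all roofs built from cones of arbitrary semisplit extensions is essentially Cuntz--Meyer--Rosenberg's theorem that their explicit colimit formula realizes the universal $\Sigma$-stable, homotopy-invariant, semisplit-excisive theory. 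Re-deriving that by hand is a substantial excision argument, not a verification. Moreover, even granting it, you would still need to know that $\Hom$ between $\pi(A)$ and $\pi(B)$ in the colocalization $\h\pNSp^\op$ agrees with $\Hom$ in the Verdier quotient of the \emph{compact} subcategory $\hosc[\Sigma^{-1}]$ by $\ker(V)$ — this is exactly Neeman's generalization of the Thomason localization theorem, which your proposal never invokes.

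The paper's proof inverts your logic and thereby avoids all explicit computation: it takes as input the \emph{universal characterization} of $\SHo$ as the Verdier quotient $V:\hosc[\Sigma^{-1}]\map\SHo$ by the thick subcategory generated by $\{\textup{cone}(f)\mid f\in\iota(T'_0)\}$ (Section 8.5 of \cite{CunMeyRos}), observes that $\ker(L_{S'}^\op)$ is the colocalizing subcategory of $\h\Sp(\iNS)^\op$ generated by $\Pi(\ker(V))$, obtains $\Theta$ for free from the universal property of the quotient (which also gives functoriality, whereas your plan to assemble $\Theta$ from hom-set isomorphisms would still require checking compatibility with composition), and then deduces full faithfulness from Neeman--Thomason exactly as in Theorem 2.26 of \cite{MyNSH}. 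If you want to salvage your write-up, replace steps (i)--(ii) by the quotation of CMR's universal property and insert the Neeman--Thomason step explicitly; the colimit formula then never needs to be compared with anything.
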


\begin{proof}
The homotopy category $\h\Sp(\iNS)^\op$ is triangulated. The canonical composite functor $\Pi:\h\iCsep=\hosc\map\h\iNS^\op\map\h\Sp(\iNS)^\op$ inverts the suspension functor $\Sigma$. Thus it factors through $\hosc[\Sigma^{-1}]$, i.e., we have the following commutative diagram:
\beqn
\xymatrix{
\h\iCsep\ar[rr]^\Pi\ar[rd]_\iota && \h\Sp(\iNS)^\op \\
& \hosc[\Sigma^{-1}]\ar@{-->}[ru]_{{\Pi}}.
}
\eeqn The dashed functor ${\Pi}$ is fully faithful (see Proposition 2.17 of \cite{MyNSH}), i.e., one has $$\hosc[\Sigma^{-1}](\iota(A),\iota(B))\cong{\dlim}_k [\Sigma^k A,\Sigma^k B]\cong\h\Sp(\iNS)^\op(\Pi(A),\Pi(B)).$$

Thanks to the universal characterization of $\SHo$ (see Section 8.5 of \cite{CunMeyRos}) one can obtain it as a Verdier quotient $\hosc[\Sigma^{-1}]\overset{V}{\map}\SHo$ of triangulated categories with respect to the set of maps $\iota(T'_0)$. We have an exact colocalization $L_{S'}^\op: \h\Sp(\iNS)^\op\map\h\Sp(\iNS)^\op$, whose essential image $\h \pNSp^\op$ is spanned by the $S'$-colocal objects. Now $\ker(V)$ is the thick subcategory of $\hosc[\Sigma^{-1}]$ generated by $\{\textup{cone}(f)\,|\, f\in\iota(T'_0)\}$ and $\ker(L_{S'}^\op)$ is the colocalizing subcategory of $\h\Sp(\iNS)^\op$ generated by ${\Pi}(\ker(V))$. Thus we obtain a unique functor $\Theta:\SHo\functor\h \pNSp^\op$ making the following diagram commute:

\beqn
\xymatrix{
&&\ker(V)\ar[d]\ar[r]^{{\Pi}} & \ker(L_{S'}^\op) \ar[d] \\
\h\iCsep\ar[rr]^\iota\ar[rrd] &&\hosc[\Sigma^{-1}]\ar[d]_V \ar[r]^{{\Pi}} & \h\Sp(\iNS)^\op \ar[d]^{L_{S'}^\op}\\
&&\SHo \ar@{-->}[r]^{\Theta} & \h \pNSp^\op.
}
\eeqn Observe that the composite functor $L_{S'}^\op\circ {\Pi}\circ\iota$ is $\pi$ and $V$ acts as identity on objects whence $\Theta$ acts as $\pi$ on objects. Taking the opposite of the above diagram one can argue as in Theorem 2.26 of \cite{MyNSH} to show that $\Theta$ is fully faithful. The argument relies on Neeman's generalization of Thomason localization theorem \cite{NeeBook} (see also \cite{KraLoc} for the formulation used in \cite{MyNSH}).
\end{proof}

\begin{thm} \label{NSp}
Let $\mathcal{S_*}$ denote the $\infty$-category of pointed spaces and $\Sp$ denote the stable $\infty$-category of spectra. We have the following: \begin{enumerate}
\item There is a fully faithful $\omega$-continuous functor $\mathcal{S_*}\hookrightarrow\iNS$.
\item If $\cC$ denotes either of the two compactly generated stable $\infty$-categories $\iNSp$, $\pNSp$, then there is a fully faithful colimit preserving exact functor $\Sp\hookrightarrow\cC$.
 \end{enumerate}
\end{thm}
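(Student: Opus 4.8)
\emph{Part (1).} The plan is to get the functor from Gelfand duality on ``reduced'' commutative $C^*$-algebras and then Ind-extend. For a finite pointed CW complex $(X,x_0)$ I would set $\widetilde C(X):=\ker(\mathrm{ev}_{x_0}\colon C(X)\to\mathbb C)$, a commutative separable $C^*$-algebra, contravariantly functorial in pointed maps and carrying pointed homotopies to $C^*$-algebra homotopies; since $\iCsep$ already inverts $C^*$-homotopies this descends to a functor $\cSf\to\iCsep^\op$, and composition with the fully faithful Yoneda functor $j\colon\iCsep^\op\hookrightarrow\iNS$ gives $\cSf\to\iNS$, $X\mapsto j\widetilde C(X)$. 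The first step is full faithfulness on $\cSf$, i.e.\ the $\infty$-categorical Gelfand-duality identity $\Map_{\iNS}(j\widetilde C(X),j\widetilde C(Y))\simeq\Map_{\iCsep}(\widetilde C(Y),\widetilde C(X))\simeq\Map_{\mathcal{S}_*}(X,Y)$ — pointed maps $X\to Y$ correspond to $*$-homomorphisms $\widetilde C(Y)\to\widetilde C(X)$ compatibly with homotopies — which rests on the description of the mapping spaces of $\iCsep$ from \cite{MyNSH}. The second step uses that $\mathcal{S}_*$ is compactly generated with $\mathcal{S}_*^\omega$ the idempotent completion of $\cSf$: since every $j(A)$ is compact in $\iNS$ and $\iNS$ is idempotent complete, the functor extends to $\mathcal{S}_*^\omega\to\iNS$ with compact values and then Ind-extends to an $\omega$-continuous functor $\mathcal{S}_*=\Ind(\mathcal{S}_*^\omega)\hookrightarrow\iNS$; this Ind-extension is automatically fully faithful once the functor on $\mathcal{S}_*^\omega$ is fully faithful with values in compact objects.

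\emph{Part (2).} Here I would invoke a universal property and reduce full faithfulness to an endomorphism computation. Both $\iNSp$ and $\pNSp$ are stable, presentable and symmetric monoidal (Proposition \ref{tensor}, Theorem \ref{SMNSp}, Proposition \ref{Cgen}); since $\Sp$ is the unit of the symmetric monoidal $\infty$-category of stable presentable $\infty$-categories, for $\cC\in\{\iNSp,\pNSp\}$ there is an essentially unique colimit-preserving (hence exact) symmetric monoidal functor $F\colon\Sp\to\cC$, characterized by $F(\mathbb S)\simeq\mathbf{1}_\cC$. Because $F$ preserves colimits, $\Sp$ is generated under colimits and shifts by $\mathbb S$, and $\mathbf{1}_\cC=F(\mathbb S)$ is compact in $\cC$ (it is the image of the compact object $\St(\mathbb C)\in\Sp(\iNS)$ under the compactly generated localization $\Sp(\iNS)\to\cC$, cf.\ Proposition \ref{Cgen}), the functor $F$ is fully faithful if and only if the canonical map $\mathbb S=\End_\Sp(\mathbb S)\to\End_\cC(\mathbf{1}_\cC)$ is an equivalence; everything thus reduces to identifying $\End_\cC(\mathbf{1}_\cC)$ with the sphere spectrum.

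To finish I would identify $\mathbf{1}_\cC$ as the (localized) suspension spectrum of $S^0$: the unit of $\iNS$ is $j(\mathbb C)$ — the unit for the maximal $C^*$-tensor product being $\mathbb C=\widetilde C(S^0)$ — which is precisely the image of $S^0$ under the embedding of part (1). By Proposition \ref{Sho} (and the analogous comparison for $\iNSp$ in \cite{MyNSH}) one has $\pi_\bullet\End_\cC(\mathbf{1}_\cC)\cong\SHo((\mathbb C,0),(\mathbb C,\bullet))$, the suspension-stable noncommutative self-maps of $\mathbb C$. The remaining and central point is that these reproduce the stable homotopy groups of spheres, so that $\End_\cC(\mathbf{1}_\cC)\simeq\mathbb S$; I would extract this from Gelfand duality applied to the $C^*$-algebraic (de)suspensions $\Sigma^k\mathbb C=C_0(\mathbb R^k)=\widetilde C(S^k)$ — for which homotopy classes of $*$-homomorphisms are homotopy classes of pointed maps between spheres, stabilizing to $\pi_\bullet^s$ — combined with the structure of $\SHo$ from Section 8.5 of \cite{CunMeyRos}.

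\emph{The hard part.} In (1) the real work is the $\infty$-categorical Gelfand-duality identification of mapping spaces $\Map_{\iCsep}(\widetilde C(Y),\widetilde C(X))\simeq\Map_{\mathcal{S}_*}(X,Y)$ for finite pointed $X,Y$. In (2) the crux is the computation $\End_\cC(\mathbf{1}_\cC)\simeq\mathbb S$ — that the noncommutative suspension-stable self-maps of $\mathbb C$ recover the classical stable homotopy ring of spheres — which in turn requires understanding how the excision-type localization producing $\cC$ from $\Sp(\iNS)$ acts on the monoidal unit $\St(\mathbb C)$; this is where I expect the bulk of the effort.
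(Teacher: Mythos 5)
Your part (1) is essentially the paper's argument: Gel'fand--Na{\u\i}mark duality gives a fully faithful functor $\cSf\hookrightarrow\iCsep^\op$, and Proposition 5.3.5.11 (1) of \cite{LurToposBook} Ind-extends it fully faithfully to $\mathcal{S}_*=\Ind_\omega(\cSf)\hookrightarrow\iNS$. For part (2) your route is a genuine (if mild) variant. The paper builds the functor on $\Sp^{\mathtt{fin}}$ by sending the sphere to $\pi^\op(\CC)$ (the universal property of finite spectra), proves full faithfulness by checking \emph{all} hom groups between suspension spectra of finite complexes -- via Theorem 2.26 of \cite{MyNSH} together with the statement that Gel'fand--Na{\u\i}mark induces a fully faithful $\h\Sp^{\mathtt{fin}}\hookrightarrow\NSH^\op$ (resp.\ via Proposition \ref{Sho} and $\SHo$ for $\pNSp$) -- and then Ind-extends. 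You instead invoke the universal property of $\Sp$ in $\Prl^{\otimes}$ and reduce everything to the single equivalence $\mathbb S\simeq\End_\cC(\one_\cC)$; that reduction (generation of $\Sp$ by $\mathbb S$, compactness of $\one_\cC$, then Ind-extension) is correct and arguably cleaner, since it isolates one computation instead of a family of them. The computational core is the same in both approaches.

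The one concrete thing your sketch underestimates is that computation itself, specifically for $\cC=\iNSp$. There the relevant comparison category is $\NSH$, whose morphisms are \emph{asymptotic} homotopy classes of \emph{asymptotic} homomorphisms $[[\Sigma^n\C(Y,y),\Sigma^n\C(X,x)]]$, so plain Gel'fand duality plus stabilization only identifies the genuine-homomorphism groups $[\C(\Sigma^n(Y,y)),\C(\Sigma^n(X,x))]\cong[\Sigma^n(X,x),\Sigma^n(Y,y)]$ with stable maps; one still needs Corollary 17 of \cite{DadAsymHom}, which says that for these (suspended commutative) $C^*$-algebras the map from homotopy classes of $*$-homomorphisms to asymptotic homotopy classes of asymptotic homomorphisms is an isomorphism. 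This is exactly the ingredient the paper's proof supplies (in the referee's addendum) and that your proposal omits. For $\cC=\pNSp$ the analogous gap is the comparison between $\dlim_k[J_{cpc}^{k}\CC,\Sigma^{n+k}\CC]$ and $\dlim_k[\Sigma^{k}\CC,\Sigma^{n+k}\CC]$ coming from the Verdier quotient defining $\SHo$; you correctly point to Section 8.5 of \cite{CunMeyRos} for this, but it needs to be made explicit rather than absorbed into ``the structure of $\SHo$''.
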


\begin{proof}
For the first assertion notice that the Gel'fand--Na{\u{\i}}mark correspondence gives a fully faithful functor $\mathcal{S_*^{\mathtt{fin}}}\hookrightarrow\iCsep^\op$. Hence there is a fully faithful $\omega$-continuous functor $\mathcal{S_*}=\Ind_\omega(\mathcal{S_*^{\mathtt{fin}}})\hookrightarrow \Ind_\omega(\iCsep^\op) =\iNS$ (see Proposition 5.3.5.11 (1) of \cite{LurToposBook}).
 
 Let us prove that there is a fully faithful colimit preserving exact functor $\Sp\hookrightarrow\iNSp$. Let $S$ denote the sphere spectrum that generates the stable $\infty$-category of finite spectra $\Sp^\mathtt{fin}$ under translations (in both directions) and cofibers in $\Sp$. Sending $S$ to $\pi^\op(\CC)\in\iNSp$ sets up an exact functor $\Sp^\mathtt{fin}\functor\iNSp$, whose image lies inside the compact objects of $\iNSp$. By Theorem 2.26 of \cite{MyNSH} this functor is fully faithful at the level of homotopy categories and hence it is fully faithful. Once again by Proposition 5.3.5.11 (1) of \cite{LurToposBook} it extends to a fully faithful $\omega$-continuous exact functor $\Sp=\Ind_\omega(\Sp^\mathtt{fin})\functor\iNSp$. Thus it preserves small coproducts and hence by Proposition 1.4.4.1 (2) of \cite{LurHigAlg} all colimits. The proof of the corresponding assertion for $\pNSp$ is similar using Proposition \ref{Sho} and hence it is left to the reader. 
 
 The referee has kindly pointed out that in conjunction with Theorem 2.26 of \cite{MyNSH} above we are using that the Gel'fand--Na{\u{\i}}mark correspondence induces a fully faithful functor $\h\Sp^\mathtt{fin}\hookrightarrow\NSH^\op$. This can be seen as follows: for finite pointed CW complexes $(X,x)$ and $(Y,y)$ one has [$\C(X,x)$ = the $C^*$-algebra of continuous functions on $X$ that vanish at $x$] $$\h\Sp^\mathtt{fin}((X,x),(Y,y))\cong {\dlim}_n [\Sigma^n (X,x),\Sigma^n (Y,y)]\cong {\dlim}_n [\C(\Sigma^n(Y,y)),\C(\Sigma^n(X,x))]$$ and $\NSH(\C(Y,y),\C(X,x))= {\dlim}_n [[\Sigma^n\C(Y,y),\Sigma^n\C(X,x)]]\cong {\dlim}_n [[\C(\Sigma^n(Y,y)),\C(\Sigma^n(X,x))]]$, where $[[-,?]]$ denotes asymptotic homotopy classes of asymptotic homomorphisms. Finally, it follows from Corollary 17 of \cite{DadAsymHom} that the canonical homomorphism $$[\C(\Sigma^n(Y,y)),\C(\Sigma^n(X,x))]\map [[\C(\Sigma^n(Y,y)),\C(\Sigma^n(X,x))]]$$ is an isomorphism.
\end{proof}

\begin{rem}
 Let us consider the category $\SHo$ for separable $C^*$-algebras. From Theorem \ref{SMNSp} and Proposition \ref{Sho} we get a symmetric monoidal functor $\pi:\hosc=\h\iCsep\functor\h\pNSp^\op$. The functor $\pi:\h\iCsep\map\h\pNSp^\op$ uniquely determines $\Theta$ below \beqn
 \xymatrix{
 \h\iCsep\ar[r]^\pi\ar[d]_\iota & \h\pNSp^\op.\\
 \SHo\ar@{-->}[ru]_\Theta
 }\eeqn The category $\SHo$ is actually a tensor triangulated category by setting $(A,m)\otimes (B,n) = (A\prot B, m+n)$, i.e., the tensor structure is compatible with that on $\h\iCsep$ induced by the maximal $C^*$-tensor product so that $\iota$ is symmetric monoidal. The explicit nature of the tensor structure on $\SHo$ can now be used to verify that $\Theta$ must be symmetric monoidal. 
\end{rem}

\subsection{Colocalizations of $\pNSp$} \label{ColocNSp}
Computations in $\pNSp$ are presumably as hard as those in the stable $\infty$-category of spectra $\Sp$. Therefore, we try to understand colocalizations of $\pNSp$ with respect to certain coidempotent objects (see Definition 3.1 of \cite{MyNSHLoc}) that lie away from $\Sp$. The following Lemma follows easily from Theorem \ref{SMNSp}. \begin{lem}
 Let $A$ be a coidempotent object in $\iNS$. Then $\Sigmat(A)$ is a coidempotent object in $\pNSp$. 
\end{lem}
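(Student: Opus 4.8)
The plan is to verify directly that the defining property of a coidempotent object is preserved by the colimit-preserving symmetric monoidal functor $\Sigmat:\iNS\functor\pNSp$ of Theorem \ref{SMNSp}. Recall from Definition 3.1 of \cite{MyNSHLoc} that an object $A$ in a closed symmetric monoidal $\infty$-category is \emph{coidempotent} if the canonical map $A\map \one$ (here $\one$ is the tensor unit) induces an equivalence $A\otimes A\overset{\sim}{\map} A\otimes\one\simeq A$ (equivalently, $A\prot A\simeq A$ compatibly with the structure maps). So first I would record that $\iNS$ and $\pNSp$ are both presentable closed symmetric monoidal $\infty$-categories, which is exactly the content of Theorem \ref{SMNSp}, and that $\Sigmat$ is symmetric monoidal and colimit preserving.

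Next, the core computation: since $\Sigmat$ is symmetric monoidal, it sends the tensor unit of $\iNS$ to the tensor unit of $\pNSp$, i.e.\ $\Sigmat(\one_{\iNS})\simeq\one_{\pNSp}$, and it intertwines the two tensor products, so there is a natural equivalence $\Sigmat(A\prot A)\simeq \Sigmat(A)\otimes\Sigmat(A)$ compatible with the coidempotent structure maps. Applying $\Sigmat$ to the equivalence $A\prot A\overset{\sim}{\map} A$ that witnesses coidempotence of $A$ therefore yields an equivalence $\Sigmat(A)\otimes\Sigmat(A)\overset{\sim}{\map}\Sigmat(A)$, and one checks this equivalence is induced by the structure map $\Sigmat(A)\map\one_{\pNSp}$ obtained by applying $\Sigmat$ to $A\map\one_{\iNS}$. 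This is precisely the statement that $\Sigmat(A)$ is coidempotent in $\pNSp$.

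There is essentially no obstacle here beyond bookkeeping; the only point requiring a little care is to make sure that the coidempotence condition is genuinely a condition on a map out of $A$ (rather than just an abstract equivalence $A\prot A\simeq A$), so that functoriality of $\Sigmat$ applied to that map gives the \emph{correct} structure morphism on $\Sigmat(A)$. Since symmetric monoidal functors preserve the unit and the coherence data, and $\Sigmat$ preserves all colimits (in particular it is exact, so it respects the cofiber sequences implicit in the smashing colocalization picture), the compatibility is automatic. In summary, the argument is: (i) coidempotence of $A$ means $A\map\one_{\iNS}$ becomes an equivalence after $-\prot A$; (ii) apply the symmetric monoidal functor $\Sigmat$; (iii) use $\Sigmat(\one_{\iNS})\simeq\one_{\pNSp}$ and $\Sigmat(-\prot -)\simeq\Sigmat(-)\otimes\Sigmat(-)$ to conclude that $\Sigmat(A)\map\one_{\pNSp}$ becomes an equivalence after $-\otimes\Sigmat(A)$, i.e.\ $\Sigmat(A)$ is coidempotent.
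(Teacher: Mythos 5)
Your argument is correct and is exactly the one the paper intends: the paper gives no written proof beyond the remark that the Lemma ``follows easily from Theorem \ref{SMNSp},'' and your proposal simply unpacks that — a symmetric monoidal functor preserves the unit, the tensor product, and the structure map $A\map\one$, hence sends the witnessing equivalence $A\otimes A\overset{\sim}{\map}A$ to the corresponding equivalence for $\Sigmat(A)$. No further comment is needed.
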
 If $A=\cpt$ or $A=\cD$, where $\cD$ is any strongly self-absorbing $C^*$-algebra, then $j(A)$ is a coidempotent object in $\iNS$ (see Lemma 3.2 of \cite{MyNSHLoc}). In this case $\Sigmat(A)$ is a coidempotent object in $\pNSp$ and using the dual of Proposition 4.8.2.4 of \cite{LurHigAlg} one concludes that the functor $R_{\Sigmat(A)}:\pNSp\map \pNSp$ sending $X\mapsto X\otimes \Sigmat(A)$ is a colocalization. We simplify the notation by setting $R_A = R_{\Sigmat(A)}$ and denote the essential image $R_A (\pNSp)$ by $\pNSp[A^{-1}]$. For any $A\in\Csep$ we set $A_\cpt = A\prot\cpt$, i.e., the $\cpt$-stabilization of $A$.

\begin{rem} \label{CAlg}
Let $\cC$ be any symmetric monoidal $\infty$-category and let $\calg(\cC)$ denote the $\infty$-category of commutative algebra objects in $\cC$ (see Definition 2.1.3.1 of \cite{LurHigAlg}). Let $E$ be an idempotent object of $\cC$. Then the localization $L_E = -\otimes E :\cC\map\cC$ with $L_E\one_\cC \simeq E$ endows $L_E\cC$ with the structure of a symmetric monoidal $\infty$-category. Moreover, the right adjoint $L_E\cC\hookrightarrow \cC$ induces a fully faithful functor $\calg(L_E\cC)\hookrightarrow\calg(\cC)$ (see Proposition 4.8.2.9 of \cite{LurHigAlg}). Hence $E$ itself is a commutative algebra object in $\cC$. This observation in the special case $\cC=\pNSp^\op$ will play an important role in the sequel.
\end{rem}

\begin{prop} \label{locCompGen}
Let $A\in\Csep$ be such that $\Sigmat(A)$ is a coidempotent object in $\pNSp$. Then the stable $\infty$-category $\pNSp[A^{-1}]$ is compactly generated and closed symmetric monoidal.
\end{prop}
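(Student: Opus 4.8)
The plan is to deduce both statements from general facts about smashing colocalizations of compactly generated closed symmetric monoidal stable $\infty$-categories, applied to $\cC = \pNSp$ with the coidempotent $\Sigmat(A)$. First I would record the setup: by Theorem~\ref{SMNSp} the $\infty$-category $\pNSp$ is presentable and closed symmetric monoidal, and by Proposition~\ref{Cgen} it is compactly generated. Since $\Sigmat(A)$ is coidempotent, the functor $R_A = -\otimes\Sigmat(A)$ is a colocalization (as noted in the text, via the dual of Proposition 4.8.2.4 of \cite{LurHigAlg}), and it is \emph{smashing} because $\otimes$ preserves colimits in each variable; hence $R_A$ preserves colimits. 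The essential image $\pNSp[A^{-1}]$ is therefore a colocalizing (in particular, localizing) subcategory closed under all colimits, and the coreflection $\pNSp \to \pNSp[A^{-1}]$ is colimit-preserving.

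For compact generation, the key observation is that a smashing colocalization of a compactly generated stable $\infty$-category is again compactly generated, with a generating set obtained by applying $R_A$ to a set of compact generators of $\pNSp$. Concretely: let $G$ be a set of compact generators of $\pNSp$ (these come, by construction, from $\Sigmat$ applied to separable $C^*$-algebras, i.e.\ from $\langle\St(\iCsep^\op)\rangle$ via the localizations). Then $\{R_A(g) : g\in G\}$ generates $\pNSp[A^{-1}]$ because $R_A$ is a colimit-preserving coreflection onto it; and each $R_A(g) = g\otimes\Sigmat(A)$ is compact \emph{in $\pNSp[A^{-1}]$} since the inclusion $\pNSp[A^{-1}]\hookrightarrow\pNSp$ preserves colimits (the colocalization being smashing), so $\Map_{\pNSp[A^{-1}]}(R_A(g),-)\simeq\Map_{\pNSp}(R_A(g),-)$ commutes with filtered colimits. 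One can alternatively phrase this at the triangulated level using the short exact sequence of triangulated categories coming from the colocalization together with Theorem 7.2.1 of \cite{KraLoc}, exactly as in the proof of Proposition~\ref{Cgen}, and then invoke Remark 1.4.4.3 of \cite{LurHigAlg} to transport compact generation back to the stable $\infty$-category. I would present the direct argument as primary and mention the Krause-style argument as an alternative.

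For the symmetric monoidal structure, I would invoke Proposition 4.8.2.7 (and 4.8.2.9) of \cite{LurHigAlg}: since $\Sigmat(A)$ is an idempotent object of $\pNSp$ in the sense of Lurie (a coidempotent of $\iNS$ becomes idempotent for the colocalization functor), the localization/colocalization functor $R_A$ is compatible with the symmetric monoidal structure, so $\pNSp[A^{-1}] = R_A\pNSp$ inherits a symmetric monoidal structure with unit $R_A(\one) = \Sigmat(A)$ for which $R_A$ is symmetric monoidal. That this monoidal structure is \emph{closed} then follows because $\pNSp[A^{-1}]$ is presentable (an accessible localization of a presentable $\infty$-category) and the tensor product preserves colimits in each variable—it is computed as $R_A$ of the tensor product in $\pNSp$, and both $R_A$ and $\otimes_{\pNSp}$ preserve colimits—so the adjoint functor theorem supplies the internal hom. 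I would cite Remark 3.5 of \cite{GepGroNik} for the passage ``symmetric monoidal localization of closed symmetric monoidal is closed'', exactly as in the proof of Theorem~\ref{SMNSp}.

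The main obstacle—really the only nontrivial point—is the compactness of the generators $R_A(g)$ \emph{inside} $\pNSp[A^{-1}]$; this hinges entirely on the colocalization being smashing (equivalently, on $\otimes$ preserving colimits, i.e.\ on closedness of the monoidal structure from Theorem~\ref{SMNSp}), which is what makes the inclusion of $\pNSp[A^{-1}]$ colimit-preserving so that compactness is detected the same way in the subcategory as in $\pNSp$. Everything else is a formal consequence of results already in place. I would keep the write-up short, emphasizing the smashing property and citing Propositions 4.8.2.4, 4.8.2.7 of \cite{LurHigAlg}, Theorem 7.2.1 of \cite{KraLoc}, and Remark 3.5 of \cite{GepGroNik}.
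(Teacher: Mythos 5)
Your proposal is correct, and for the compact-generation half it takes a different (and more direct) route than the paper. The paper stays entirely at the level of triangulated homotopy categories: it observes that $R_A:\h\pNSp\map\h\pNSp$ is a coproduct-preserving colocalization, deduces from the functorial triangle $R_A(X)\map X\map L_A(X)$ that the complementary localization $L_A$ also preserves coproducts, concludes that $\ker(R_A)\simeq\Im(L_A)$ is compactly generated (Remark 5.5.2 of \cite{KraLoc}), and then identifies $\h\pNSp[A^{-1}]$ with the Verdier quotient $\h\pNSp/\ker(R_A)$, which is compactly generated by Neeman--Krause localization theory (Theorem 5.6.1 of \cite{KraLoc}); finally Remark 1.4.4.3 of \cite{LurHigAlg} transports this back to the stable $\infty$-category. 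Your primary argument instead exhibits explicit compact generators $R_A(g)=g\otimes\Sigmat(A)$ of $\pNSp[A^{-1}]$ directly, using that the inclusion of the coreflective subcategory is a left adjoint and hence colimit-preserving. This is cleaner and avoids the detour through $\ker(R_A)$; the paper's route has the advantage of reusing off-the-shelf triangulated localization theorems without having to discuss compactness of individual tensor products. Your alternative sketch via Theorem 7.2.1 of \cite{KraLoc} is essentially the paper's argument. The closed symmetric monoidal part is handled identically in both (Lemma 3.4 and Remark 3.5 of \cite{GepGroNik}, equivalently Propositions 4.8.2.7/4.8.2.9 of \cite{LurHigAlg}).

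One step in your direct argument is left implicit and should be spelled out: the displayed equivalence $\Map_{\pNSp[A^{-1}]}(R_A(g),-)\simeq\Map_{\pNSp}(R_A(g),-)$ only reduces compactness in the subcategory to compactness of $R_A(g)=g\otimes\Sigmat(A)$ \emph{in $\pNSp$}, which you do not verify. It is true, but it needs a sentence: $\Sigmat(A)$ is compact in $\pNSp$ because $A$ is separable (the compact generators of $\pNSp$ come from $\langle\St(\iCsep^\op)\rangle$), the tensor product of two such generators is again of this form since $\Sigmat$ is symmetric monoidal (Theorem \ref{SMNSp}), and since $-\otimes\Sigmat(A)$ is exact and $\otimes$ preserves colimits in each variable, it carries the thick subcategory of compact objects into itself. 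With that sentence added, the argument is complete.
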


\begin{proof}
It follows from Lemma 3.4 (and Remark 3.5) of \cite{GepGroNik} that $\pNSp[A^{-1}]$ is a closed symmetric monoidal $\infty$-category. It is clear that the $\infty$-category $\pNSp[A^{-1}]$ is stable. Thus it suffices to show that its triangulated homotopy category $\h\pNSp[A^{-1}]$ is compactly generated (see Remark 1.4.4.3 of \cite{LurHigAlg}). Proposition \ref{Cgen} above shows that $\pNSp$ is compactly generated whence so is its triangulated homotopy category. We observe that $R_A:\h \pNSp\map \h\pNSp$ is a coproduct preserving colocalization of triangulated categories, whose essential image is $\h\pNSp[A^{-1}]$. From the functorial triangle $R_A(X) \map X\map L_A(X) \map \Sigma R_A(X)$ we deduce that the corresponding localization $L_A:\h \pNSp\map \h \pNSp$ also preserves coproducts. Hence $\Im(L_A) \simeq \ker(R_A)$ is a compactly generated triangulated category (see Remark 5.5.2 of \cite{KraLoc}). Observe that $\h \pNSp/\ker(R_A)\simeq\Im(R_A) = \h\pNSp[A^{-1}]$ whence $\h\pNSp[A^{-1}]$ is compactly generated (see Theorem 5.6.1 of \cite{KraLoc}). 
 \end{proof}

\begin{prop} \label{SScoloc}
 Let $\cD,\cD'$ be separable $C^*$-algebras such that $\Sigmat(\cD)$ and $\Sigmat(\cD')$ are both coidempotent objects in $\pNSp$. Moreover, let $\iota:\cD\map\cD'$ be a $*$-homomorphism, such that $\cD\prot\cD'\overset{\iota\otimes\id_{\cD'}}{\map}\cD'\prot\cD'$ is homotopic to an isomorphism. Then there is a colocalization $\theta:\pNSp[\cD^{-1}]\functor\pNSp[\cD'^{-1}]$ given by $\theta(-) = -\otimes\Sigmat(\cD\otimes\cD')$, such that $R_{\cD'}\simeq \theta\circ R_\cD$. 
\end{prop}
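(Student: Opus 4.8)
The plan is to reduce the statement to a single identification of colocalizations of $\pNSp$, namely $R_{\cD'}\simeq R_{\cD\prot\cD'}$, where $R_{\cD\prot\cD'}(-)=-\otimes\Sigmat(\cD\prot\cD')$ is by definition the functor $\theta$. First I would record that $\Sigmat(\cD\prot\cD')\simeq\Sigmat(\cD)\otimes\Sigmat(\cD')$ — the equivalence coming from the symmetric monoidality of $\Sigmat$ in Theorem \ref{SMNSp} — is again a coidempotent object of $\pNSp$: in any symmetric monoidal $\infty$-category the tensor product of two coidempotent objects $E,F$ is coidempotent, because the structure map $E\otimes F\to\one$ becomes, after applying $-\otimes(E\otimes F)$, the tensor product of the two defining equivalences, up to the symmetry constraint. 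Hence, exactly as for $R_\cD$ and $R_{\cD'}$ (dual of Proposition 4.8.2.4 of \cite{LurHigAlg}), the functor $R_{\cD\prot\cD'}(-)=-\otimes\Sigmat(\cD\prot\cD')$ is a colocalization of $\pNSp$ with essential image $\pNSp[(\cD\prot\cD')^{-1}]$, and from $-\otimes\Sigmat(\cD\prot\cD')\simeq(-\otimes\Sigmat(\cD))\otimes\Sigmat(\cD')\simeq(-\otimes\Sigmat(\cD'))\otimes\Sigmat(\cD)$ one reads off $R_{\cD\prot\cD'}\simeq R_\cD\circ R_{\cD'}\simeq R_{\cD'}\circ R_\cD$ (this much is unconditional).

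Next I would feed in the hypothesis. Since $\iota\otimes\id_{\cD'}:\cD\prot\cD'\to\cD'\prot\cD'$ is homotopic to a $*$-isomorphism, it is an isomorphism in $\h\iCsep$; applying the symmetric monoidal functor $\pi:\h\iCsep\to\h\pNSp^\op$ (Theorem \ref{SMNSp}, Proposition \ref{Sho}) and using $\pi(A\prot B)\simeq\pi(A)\otimes\pi(B)$ gives an isomorphism $\Sigmat(\cD')\otimes\Sigmat(\cD')\overset{\sim}{\to}\Sigmat(\cD)\otimes\Sigmat(\cD')$ in $\h\pNSp^\op$, hence an equivalence in $\pNSp$ (a map is an equivalence iff it becomes an isomorphism in the homotopy category). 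Composing with the coidempotence equivalence $\Sigmat(\cD')\otimes\Sigmat(\cD')\simeq\Sigmat(\cD')$ we obtain $\Sigmat(\cD')\simeq\Sigmat(\cD)\otimes\Sigmat(\cD')\simeq\Sigmat(\cD\prot\cD')$ in $\pNSp$. Since $\otimes$ is a functor of two variables, this equivalence of objects promotes to a natural equivalence of colocalizations $R_{\cD'}\simeq R_{\cD\prot\cD'}$; in particular $\pNSp[\cD'^{-1}]=\pNSp[(\cD\prot\cD')^{-1}]$, and combining with the previous paragraph, $R_{\cD'}\simeq R_\cD\circ R_{\cD'}$. The latter shows that every $\cD'$-colocal object $X$ satisfies $X\simeq R_{\cD'}X\simeq R_\cD(R_{\cD'}X)\simeq R_\cD X$, so $\pNSp[\cD'^{-1}]$ is a full subcategory of $\pNSp[\cD^{-1}]$.

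Finally I would assemble the colocalization statement. By definition $\theta$ is the restriction of $-\otimes\Sigmat(\cD\prot\cD')$ to $\pNSp[\cD^{-1}]$, on which $R_\cD$ acts as the identity; hence for $X\in\pNSp[\cD^{-1}]$ one has $\theta(X)=X\otimes\Sigmat(\cD\prot\cD')=R_{\cD\prot\cD'}(X)\simeq R_{\cD'}(R_\cD X)\simeq R_{\cD'}(X)$. Thus $\theta$ is the restriction of the colocalization $R_{\cD'}:\pNSp\to\pNSp$ along the inclusion of the full subcategory $\pNSp[\cD^{-1}]\supseteq\pNSp[\cD'^{-1}]$; and restricting a coreflector along the inclusion of any full subcategory containing the coreflective subcategory again yields a coreflector onto that subcategory, so $\theta:\pNSp[\cD^{-1}]\to\pNSp[\cD'^{-1}]$ is a colocalization. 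The relation $R_{\cD'}\simeq\theta\circ R_\cD$ is then immediate: $\theta(R_\cD X)=X\otimes\Sigmat(\cD)^{\otimes 2}\otimes\Sigmat(\cD')\simeq X\otimes\Sigmat(\cD)\otimes\Sigmat(\cD')=R_{\cD\prot\cD'}(X)\simeq R_{\cD'}(X)$, using coidempotence of $\Sigmat(\cD)$ and then $R_{\cD\prot\cD'}\simeq R_{\cD'}$.

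The step that I expect to need the most care is the translation in the third paragraph: one must ensure that ``homotopic to an isomorphism'' genuinely produces an equivalence in $\pNSp$ (and not merely in $\h\pNSp$), with enough naturality to pass from objects to the functors $-\otimes(-)$, and one must keep track of the op's through $\iCsep\crp\iNS\crp\pNSp$ and through the monoidal structure, so that $\iota\otimes\id_{\cD'}$ is carried to a morphism between precisely the objects claimed. Everything after that is formal manipulation of (co)localizations.
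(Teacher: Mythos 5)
Your proof is correct and follows essentially the same route as the paper: the key step in both is to combine the coidempotence equivalence $\Sigmat(\cD')\simeq\Sigmat(\cD')\otimes\Sigmat(\cD')$ with the image of $\iota\otimes\id_{\cD'}$ to obtain $\Sigmat(\cD')\simeq\Sigmat(\cD\prot\cD')$, after which everything is formal. You merely spell out in more detail what the paper compresses into ``the assertion follows'' (the coidempotence of the tensor product, the identification $R_{\cD\prot\cD'}\simeq R_{\cD'}\circ R_\cD$, and the restriction-of-a-coreflector argument), all of which is sound.
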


\begin{proof}
Since $\Sigmat(\cD')\cong\Sigmat(\cD')\otimes\Sigmat(\cD')$ it follows that $$\Sigmat(\cD')\cong\Sigmat(\cD')\otimes\Sigmat(\cD')\map\Sigmat(\cD)\otimes\Sigmat(\cD')\simeq\Sigmat(\cD\otimes\cD')$$ is homotopic to an equivalence. Since $R_\cD$ (resp. $R_{\cD'}$) is $-\otimes\Sigmat(\cD)$ (resp. $-\otimes\Sigmat(\cD')$) and $\Sigmat(\cD)\otimes\Sigmat(\cD')\simeq \Sigmat (\cD\otimes\cD')$, the assertion follows.
\end{proof}

\begin{ex} \label{SScolocEx}
We present two pertinent examples of the above scenario.
\begin{enumerate}
\item If $\cD\map\cD'$ is a unital embedding between strongly self-absorbing $C^*$-algebras, we deduce from the Proposition on page 4027 of \cite{TomWin} that $\cD\prot\cD'\map\cD'\prot\cD'$ is homotopic to an isomorphism. 

\item \label{colocEx} For any strongly self-absorbing $C^*$-algebra $\cD$ the corner embedding $\cD\overset{\iota}{\map}\cD\prot\cpt$ has the property that $\iota\otimes\id_{\cD\prot\cpt}$ is homotopic to an isomorphism (see Proposition 2.9 of \cite{MyNSHLoc}). Note that $\cpt$ itself is not a strongly self-absorbing $C^*$-algebra.
\end{enumerate}
\end{ex}

\begin{rem}
Observe that the functor $\pNSp[\cD'^{-1}]\map\pNSp[\cD^{-1}]$, which is the left adjoint to $\theta$ in the above Proposition \ref{SScoloc}, is a colimit preserving fully faithful functor between compactly generated stable $\infty$-categories.
\end{rem}

The Jiang--Su algebra $\cZ$ was introduced in \cite{JiangSu} and it plays a crucial role in Elliott's Classification Program. It is itself strongly self-absorbing and for any other strongly self-absorbing $C^*$-algebra $\cD$ there is a unique (up to homotopy) unital embedding $\cZ\map\cD$ \cite{WinZStable}. There is also a canonical unital $*$-homomorphism $\Oinf\map\Oinf\prot\cQ$, where $\cQ$ is the universal UHF algebra. Hence we obtain the following sequence of colocalizations of $\pNSp$ (see Equation \ref{diamond} and the comment thereafter): \beq\quad\quad \pNSp[(\Oinf\prot\cQ)^{-1}]\hookrightarrow\cdots\hookrightarrow\pNSp[(M_{2^\infty}\prot\Oinf)^{-1}]\hookrightarrow\pNSp[\Oinf^{-1}]\hookrightarrow\pNSp[\cZ^{-1}]\hookrightarrow\pNSp.\eeq There is yet another sequence of colimit preserving fully faithful functors between stable and compactly generated $\infty$-categories:

\beq \label{Loc}\cdots\hookrightarrow\pNSp[(\cpt\prot\Oinf)^{-1}]\hookrightarrow\pNSp[(\cpt\prot\cZ)^{-1}]\hookrightarrow\pNSp[\cZ^{-1}]\hookrightarrow\pNSp.\eeq

\begin{rem}
In sequence \eqref{Loc} all the stable $\infty$-categories to left of $\pNSp[\cZ^{-1}]$ can be viewed as fully faithful subcategories of $\ikk^\op$ that we are shortly going to introduce in Section \ref{KKcoloc}.
\end{rem}

\subsection{Bootstrap categories} \label{GenBoot}
Let us explain the construction of the bootstrap category in a general setting. Let $\cC$ be a compactly generated and closed symmetric monoidal stable $\infty$-category with unit object $\one_\cC$, which is a compact object. Let $V$ denote a set of compact objects of $\cC$ and let $\langle V\rangle$ denote the full stable $\infty$-subcategory of $\cC$ generated by the translations (in both directions) and cofibers of the objects in $V$. Observe that all objects of $\langle V\rangle$ are again compact in $\cC$. Therefore, $\Ind_\omega(\langle V\rangle)$ is a compactly generated full stable $\infty$-subcategory of $\cC$ (see Proposition 5.3.5.11 (1) of \cite{LurToposBook}), which is the {\em bootstrap category} in $\cC$ {\em generated by $V$}. Two different sets $V,V'$ may generate equivalent bootstrap categories. If $V=\{\one_\cC\}$ is singleton, then $\Ind_\omega(\langle\one_\cC\rangle)$ is called {\em the bootstrap category} in $\cC$; we also refer to $\Ind_\omega(\langle\one_\cC\rangle)^\op$ as {\em the bootstrap category} in $\cC^\op$. The bootstrap category $\Ind_\omega(\langle V\rangle)$ generated by a set $V$ of compact noncommutative spectra is intuitively the subcategory of noncommutative spectra that can be constructed by using the objects of $V$ as basic building blocks. 

\begin{ex}
Typical choices for $V$ constitute a small set of simple separable $C^*$-algebras.
  \begin{enumerate}
  \item The bootstrap category in $\pNSp$ generated by $\{\CC\}$ is the stable $\infty$-category of spectra $\Sp$ (see Theorem \ref{NSp}).
  \item If $V=\{ M_n(\CC) \,|\, n\in\NN\}$ then the bootstrap category in $\pNSp$ generated by $V$ is the category of {\em noncommutative stable cell complexes} \cite{EilLorPed, ThomThesis, MyNGH}. 
 \end{enumerate}
\end{ex}

\section{$\cpt$-colocalization of noncommutative spectra} \label{KKcoloc} It was remarked by the author in \cite{MyNSH} that the opposite of a stable $\infty$-categorical model for $\KK$-theory can be constructed as an accessible localization of the stable presentable $\infty$-category $\Sp(\iNS)$ (see Remark 2.29 of \cite{MyNSH}). Since $\pNSp$ is symmetric monoidal we are able to show that a smashing colocalization of noncommutative spectra furnishes us with an $\infty$-categorical incarnation of (the opposite of) $\KK$-category. An argument similar to Lemma 3.3 of \cite{MyNSHLoc} shows that $\Sigmat(\cpt)$ is a coidempotent object in $\pNSp$. It follows from Proposition 3.4 of \cite{MyNSHLoc} that the functor $R = R_\cpt :\pNSp\map \pNSp$ sending $X\mapsto X\otimes \Sigmat(\cpt)$ is a colocalization. We denote the essential image of the colocalization by $\pNSp[\cpt^{-1}]:= R (\pNSp)$, which is a closed symmetric monoidal $\infty$-category (see Proposition \ref{locCompGen}).

\begin{defn}
 We set $\ikk:= \pNSp[\cpt^{-1}]^\op$. The symmetric monoidal stable $\infty$-category $\ikk$ is our $\infty$-categorical model for the bivariant $\K$-theory category.
\end{defn}

\noindent
There is a composite functor $$\iNS\overset{\Sigmat}{\map}\pNSp\overset{R}{\map}\pNSp[\cpt^{-1}],$$ whose opposite functor is denoted by $\k:\iNS^\op\map\ikk$. 

\begin{defn} \label{Kdefn}
 For any pointed noncommutative space $X\in\iNS$ we define \beqn
 \K_*(X) &=& \pNSp[\cpt^{-1}](\k(\CC),\Sigma^*\k(X)) = \h\ikk(\Sigma^*\k(X),\k(\CC)) \\
 \K^*(X) &=& \pNSp[\cpt^{-1}](\k(X),\Sigma^*\k(\CC)) = \h\ikk(\Sigma^*\k(\CC),\k(X))
 \eeqn
where $\K_*(X)$ (resp $\K^*(X)$) is the $\K$-homology (resp. $\K$-theory) of $X$.
\end{defn}

\begin{thm} \label{klim}
The functor $\k:\iNS^\op\functor\ikk$ is symmetric monoidal and it preserves limits.
\end{thm}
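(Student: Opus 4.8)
The plan is to deduce both assertions from the corresponding properties of the functors out of which $\k$ is built, since $\k$ is by definition the opposite of the composite $R\circ\Sigmat:\iNS\functor\pNSp\functor\pNSp[\cpt^{-1}]$. First I would handle the symmetric monoidal claim. By Theorem \ref{SMNSp} the functor $\Sigmat:\iNS\functor\pNSp$ is symmetric monoidal. The colocalization $R=R_\cpt=-\otimes\Sigmat(\cpt):\pNSp\functor\pNSp$ is smashing and, since $\Sigmat(\cpt)$ is a coidempotent object of $\pNSp$, dually (via Remark \ref{CAlg} applied to $\cC=\pNSp^\op$, or the dual of Proposition 4.8.2.9 of \cite{LurHigAlg}) the essential image $\pNSp[\cpt^{-1}]$ inherits a symmetric monoidal structure for which $R:\pNSp\functor\pNSp[\cpt^{-1}]$ is symmetric monoidal; this is exactly the content of Lemma 3.4 and Remark 3.5 of \cite{GepGroNik} invoked already in the proof of Proposition \ref{locCompGen}. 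Hence $R\circ\Sigmat:\iNS\functor\pNSp[\cpt^{-1}]$ is symmetric monoidal, and passing to opposite categories (the opposite of a symmetric monoidal functor between symmetric monoidal $\infty$-categories is again symmetric monoidal) shows $\k:\iNS^\op\functor\ikk$ is symmetric monoidal.

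For the limit-preservation claim, the key observation is that $\k$ preserves limits if and only if $R\circ\Sigmat:\iNS\functor\pNSp[\cpt^{-1}]$ preserves colimits. Now $\Sigmat$ is colimit preserving by Theorem \ref{SMNSp}. The colocalization functor $R:\pNSp\functor\pNSp$ need not preserve all colimits in general, but because $\Sigmat(\cpt)$ is coidempotent the colocalization is \emph{smashing}: $R=-\otimes\Sigmat(\cpt)$ and $\pNSp$ is closed symmetric monoidal (Theorem \ref{SMNSp}, Proposition \ref{Cgen}), so $-\otimes\Sigmat(\cpt)$ preserves all colimits separately in its remaining variable. The corestriction $R:\pNSp\functor\pNSp[\cpt^{-1}]$ onto its essential image therefore preserves colimits, since $\pNSp[\cpt^{-1}]$ is a colocalizing (hence colimit-closed) subcategory of $\pNSp$ — equivalently one argues on homotopy categories as in Proposition \ref{locCompGen} that $R$ and the associated localization $L_\cpt$ are coproduct preserving, so by stability $R$ preserves all colimits into $\pNSp[\cpt^{-1}]$. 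Composing, $R\circ\Sigmat$ is colimit preserving, hence $\k$ is limit preserving.

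The main obstacle — really the only subtle point — is making precise that "corestriction of a smashing colocalization to its essential image preserves colimits"; one must be careful because colimits in $\pNSp[\cpt^{-1}]$ are computed as $R$ applied to the ambient colimit in $\pNSp$, not as the ambient colimit itself (the inclusion $\pNSp[\cpt^{-1}]\hookrightarrow\pNSp$ is a \emph{right} adjoint and does not preserve colimits). The clean way to phrase it: for a diagram $p:I\functor\iNS$, we have $\k(\lim p)^\op = R\Sigmat(\colim p^\op) = R(\colim_I \Sigmat p^\op)$ in $\pNSp$, and since $R=-\otimes\Sigmat(\cpt)$ commutes with the colimit $\colim_I$, this equals $\colim_I R(\Sigmat p^\op)$ computed in $\pNSp$; as each $R\Sigmat p^\op(i)$ already lies in $\pNSp[\cpt^{-1}]$ and $R$ is a colocalization (idempotent), $\colim_I$ taken in $\pNSp$ of objects in the image is itself colocal up to applying $R$, and $R$ of it is the colimit in $\pNSp[\cpt^{-1}]$. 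Thus $\k$ carries the limit $\lim p$ in $\iNS^\op$ to the limit of $\k\circ p$ in $\ikk$. Everything else is formal manipulation of opposite categories and of already-established structural results.
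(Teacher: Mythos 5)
Your argument follows the same route as the paper's, which is a two-line proof: symmetric monoidality of $\k$ comes from $R$ being a smashing colocalization (via Example 2.2.1.7 and Proposition 2.2.1.9 of Lurie, i.e.\ the same mechanism as your appeal to Gepner--Groth--Nikolaus), and limit preservation holds because the opposite functor $R\circ\Sigmat$ preserves colimits. Your conclusion is correct, but one correction is in order: you have the adjunction for the colocalization backwards. For a colocalization the inclusion $\pNSp[\cpt^{-1}]\hookrightarrow\pNSp$ is the fully faithful \emph{left} adjoint and the corestriction $R$ is its \emph{right} adjoint; consequently the coreflective subcategory $\pNSp[\cpt^{-1}]$ is closed under colimits computed in $\pNSp$, and colimits there are the ambient ones --- it is \emph{limits} in $\pNSp[\cpt^{-1}]$ that are computed by applying $R$ to the ambient limit. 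So the ``only subtle point'' you flag evaporates: once $R(\colim_I\Sigmat p^\op)\simeq\colim_I R(\Sigmat p^\op)$ is known from the smashing property and the closed symmetric monoidal structure of $\pNSp$ (Theorem \ref{SMNSp}), that colimit of colocal objects is already the colimit in $\pNSp[\cpt^{-1}]$, with no further application of $R$ required. Your extra application of $R$ is harmless (since $R$ is idempotent and commutes with the colimit it returns an equivalent object), so the argument still closes, but the justification you offer for that step is not the right mechanism.
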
 

\begin{proof}
 Since $R$ is a smashing colocalization the functor $\k:\iNS^\op\functor\ikk$ is symmetric monoidal (see Example 2.2.1.7 and Proposition 2.2.1.9 of \cite{LurHigAlg}). The functor $\k$ preserves limits because its opposite functor $R\circ\Sigmat$ preserves colimits. 
\end{proof}

Now we justify that the above definitions are good ones. We continue to denote by $\k:\h\iNS^\op\map\h\ikk$ the induced functor at the level of homotopy categories. By abuse of notation we also denote the composite functor $\iCsep\overset{j^\op}{\map}\iNS^\op\overset{\k}{\map}\ikk$ as well as the functor that it induces at the level of homotopy categories by $\k$. Let $\KKcat$ denote the Kasparov bivariant $\K$-theory category for separable $C^*$-algebras, whose morphisms are given by $\KKcat(A,B) = \KK_0(A,B)$ and the composition of morphisms is induced by Kasparov product. There is a canonical functor $\Csep\map\KKcat$, which is identity on objects.

\begin{thm} \label{KK}
 For any two separable $C^*$-algebras $A,B$ there is a natural isomorphism $$\KKcat(A,B)\cong\h\ikk(\k(A),\k(B))=\h\pNSp[\cpt^{-1}]^\op(\k(A),\k(B)).$$ In other words, the functor $\k$ induces a fully faithful functor $\KKcat\map\h\ikk$.
\end{thm}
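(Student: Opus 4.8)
The plan is to identify $\KKcat$ with a localization of $\SHo$ and then transport that identification through the fully faithful functor $\Theta:\SHo\functor\h\pNSp^\op$ of Proposition~\ref{Sho}, finally intersecting with the colocalization $\pNSp[\cpt^{-1}]$. Concretely, recall from \cite{CunMeyRos} (Section~8.5) that Kasparov $\KK$-theory of separable $C^*$-algebras is obtained from $\SHo$ by a Bousfield localization that forces $C^*$-stability; more precisely, there is a localization functor $\SHo\functor\KKcat$ (after also building in the half-exactness coming from semisplit extensions, which is already present in $\SHo$ since $\h\pNSp^\op$ is built from semisplit extensions) whose defining property is that it universally inverts the corner embedding $A\map A\prot\cpt$ for every separable $A$, equivalently the $\ast$-homomorphism $e_{11}:\CC\map\cpt$. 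The first step is therefore to record this: $\KKcat(A,B)\cong \SHo[\text{corner embeddings}^{-1}](A,B)$, and to observe that tensoring a separable $C^*$-algebra with $\cpt$ has the effect of this localization on the relevant hom-groups, i.e. $\KKcat(A,B)\cong\SHo(A_\cpt,B_\cpt)$ after one checks that $A\prot\cpt$ is already $\cpt$-stable and that the localization is smashing. This is the same mechanism as Example~\ref{SScolocEx}\eqref{colocEx}.

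Next I would pass to $\pNSp$. By Proposition~\ref{Sho}, $\SHo(A_\cpt,B_\cpt)\cong\h\pNSp^\op(\pi(A_\cpt),\pi(B_\cpt))$, and since $R=R_\cpt$ is a smashing colocalization of $\pNSp$ (dually a localization of $\pNSp^\op$), one has $\pi(A_\cpt)=\pi(A\prot\cpt)\simeq\pi(A)\otimes\Sigmat(\cpt)=R(\pi(A))$ in $\h\pNSp$, using that $\pi=\Sigmat^\op\circ j^\op$ is symmetric monoidal (Theorem~\ref{SMNSp}) and that $\Sigmat(\cpt)$ is the relevant coidempotent. Now $\k(A)$ is by definition $R(\pi(A))$ viewed in $\ikk=\pNSp[\cpt^{-1}]^\op$, and because $R$ is a colocalization the canonical map $R(\pi(A))\map\pi(A)$ induces isomorphisms
\[
\h\pNSp^\op(R(\pi(A)), R(\pi(B)))\;\cong\;\h\pNSp^\op(\pi(A), R(\pi(B)))\;\cong\;\h\pNSp^\op(\pi(A_\cpt),\pi(B_\cpt)),
\]
where the first isomorphism is the colocalization adjunction and the second is the identification $R(\pi(B))\simeq\pi(B_\cpt)$ just established. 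Chaining these with the results of the previous paragraph gives the asserted natural isomorphism $\KKcat(A,B)\cong\h\ikk(\k(A),\k(B))$; naturality in $A$ and $B$ is inherited from naturality of $\Theta$, of the colocalization unit, and of the comparison $\pi(-\prot\cpt)\simeq R\circ\pi(-)$. Fully faithfulness of $\k:\KKcat\map\h\ikk$ is then immediate, and one checks on objects that $\k$ is the functor induced by $\Csep\map\KKcat$ by tracing through the definitions (as in the final sentence of the proof of Proposition~\ref{Sho}, where $\Theta$, hence $\k$, acts as $\pi$ on objects, and $\pi(A)$ and $R(\pi(A))$ represent the same object of $\ikk$).

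The main obstacle I anticipate is the very first step: cleanly matching the Bousfield localization of $\SHo$ defining $\KKcat$ in \cite{CunMeyRos} with the $\cpt$-colocalization of $\pNSp^\op$. Two points need care. First, Kasparov $\KK$-theory as in \cite{KasKK1,KasKK2} is a priori defined via Kasparov modules, and one must invoke the Cuntz--Higson--Zekri style comparison (available in \cite{CunMeyRos}) identifying it with the universal split-exact, homotopy-invariant, $C^*$-stable theory, hence with a localization of $\SHo$; this is where the hypothesis that we restrict to \emph{separable} $C^*$-algebras is essential, since only there does the Kasparov product agree with the composition in the localized category. Second, one must verify that $\cpt$-stabilization computes this localization on morphism groups, i.e. that $\SHo[\cpt^{-1}](A,B)\cong\SHo(A_\cpt,B_\cpt)$ and that $A_\cpt$ is already local; this is a smashing-localization phenomenon and follows from the fact that $\Sigmat(\cpt)$ is coidempotent together with the telescope/stability argument underlying Example~\ref{SScolocEx}\eqref{colocEx}, but it should be spelled out. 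Once these identifications are in place the remaining steps are formal manipulations with adjunctions and symmetric monoidal functors that have already been assembled in Sections~\ref{SepCstab} and the present section.
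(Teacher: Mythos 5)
Your proposal is correct and follows essentially the same route as the paper: both rest on Proposition~\ref{Sho}, the identification $\KKcat(A,B)\cong\SHo(A\prot\cpt,B\prot\cpt)$ from Theorem~8.28 of \cite{CunMeyRos} (together with the description of $\KKcat$ as a localization of $\SHo$ via $A\mapsto A\prot\cpt$), and the fact that $R_\cpt$ is a smashing colocalization compatible with $\Theta$ through the symmetric monoidal structure. The paper packages the final step as a commutative diagram of (co)localizations rather than your explicit hom-group chase with the colocalization adjunction, but the content is identical.
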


\begin{proof}
There is a fully faithful exact functor $\Theta:\SHo\map\h \pNSp^\op$ (see Proposition \ref{Sho}). There is another natural identification $\SHo(A\prot\cpt,B\prot\cpt)\cong\KKcat(A,B)$ (see Theorem 8.28 of \cite{CunMeyRos}). Using Theorem 13.7 of \cite{CunMeyRos} and the comment thereafter one deduces that there is a localization of triangulated categories $\SHo\map\KKcat\map\SHo$, where the first functor acts on objects as $A\mapsto A\prot\cpt$ and the second functor is fully faithful. Since the localization $R^\op:\h \pNSp^\op\map \ikk$ is smashing we get a commutative diagram:
\beqn
\xymatrix{
\h\iCsep\ar[r]^\iota\ar[rd] &\SHo\ar[rr]^\Theta \ar[d] && \h \pNSp^\op\ar[d]^{R^{\op}} \\
&\KKcat\ar[rr]^\theta\ar[d] && \h\ikk\ar[d] \\
&\SHo\ar[rr]^\Theta &&  \h \pNSp^\op,
}
\eeqn where the composite left vertical functor $\SHo\map\KKcat\map\SHo$ is the localization described above. The composition $R^\op\circ\Theta\circ\iota$ is the functor $\k$ and the middle horizontal functor $\theta:\KKcat\map\h\ikk$ continues to be fully faithful. Hence we get the desired natural isomorphism $$\KKcat(A,B)\cong\h\ikk(\k(A),\k(B)).$$ 
\end{proof}

Let $\iota:\CC\map\cpt$ denote the $*$-homomorphism which sends $1$ to $e_{11}$. This induces a map $\k(\iota):\k(\CC)\map\k(\cpt)$ in $\ikk$. 
\begin{cor}\label{Kstable}
 The map $\k(\iota)$ is an equivalence in $\ikk$.
\end{cor}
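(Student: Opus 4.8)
The plan is to deduce this from the fact that $\cpt$ becomes invertible in the $\cpt$-colocalization together with the known $C^*$-algebraic fact that $\iota\colon\CC\map\cpt$ induces a $\KK$-equivalence. First I would recall the definitions: $\k$ on $\iCsep$ is the composite $\iCsep\overset{j^\op}{\map}\iNS^\op\overset{\k}{\map}\ikk$, and by Theorem \ref{KK} we have natural isomorphisms $\h\ikk(\k(A),\k(B))\cong\KKcat(A,B)$ for separable $A,B$. In particular the functor $\k$ restricted to $\h\iCsep$ factors through $\KKcat$, and since $\KKcat\map\h\ikk$ is fully faithful, a morphism $\k(f)$ is an equivalence in $\h\ikk$ precisely when its image in $\KKcat$ is an isomorphism, i.e.\ precisely when $f$ is a $\KK$-equivalence.

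Next I would invoke the classical stability of Kasparov $\KK$-theory: the corner inclusion $\iota\colon\CC\map\cpt$, $1\mapsto e_{11}$, is a $\KK$-equivalence (this is $C^*$-stability of $\KK$, or equivalently one can cite Theorem 8.28 of \cite{CunMeyRos} together with the identification $\SHo(A\prot\cpt,B\prot\cpt)\cong\KKcat(A,B)$ already used in the proof of Theorem \ref{KK}, which says exactly that $A\mapsto A\prot\cpt$ becomes an isomorphism-on-objects localization, so $\k(\iota)$ — being $\k$ applied to the unit of $\cpt$-stabilization — is inverted). Pulling this back through the fully faithful functor $\KKcat\map\h\ikk$, it follows that $\k(\iota)\colon\k(\CC)\map\k(\cpt)$ is an isomorphism in $\h\ikk$; since $\ikk$ is a stable $\infty$-category, a morphism is an equivalence iff it is an isomorphism in the homotopy category $\h\ikk$, so $\k(\iota)$ is an equivalence in $\ikk$.

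Alternatively — and this is perhaps the cleanest argument to write — one can argue directly at the level of the colocalization without passing through $\KKcat$ at all. The object $\Sigmat(\cpt)$ is a coidempotent object of $\pNSp$, which by Remark \ref{CAlg} (applied to $\cC=\pNSp^\op$) means $\k(\cpt)\simeq R^\op\Sigmat^\op(\cpt)$ is the unit object $\one_{\ikk}$ of the symmetric monoidal category $\ikk$. On the other hand $\k(\CC)=R^\op\Sigmat^\op(\CC)$ is $R$ applied to the unit of $\pNSp$, which is again $\one_{\ikk}$ since $R$ is a smashing colocalization and $\k$ is symmetric monoidal (Theorem \ref{klim}). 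Thus $\k(\iota)$ is a morphism between two copies of the unit object; one then checks that it is the identity on $\one_{\ikk}$, or more precisely that the unit constraint identifies $\k(\iota)$ with an equivalence. Concretely, $R(-)=-\otimes\Sigmat(\cpt)$ sends $\iota$ to $\id\otimes\Sigmat(\iota)\colon \one\otimes\Sigmat(\cpt)\map\Sigmat(\cpt)\otimes\Sigmat(\cpt)$, and the latter is an equivalence because $\Sigmat(\cpt)$ is coidempotent (the multiplication $\Sigmat(\cpt)\otimes\Sigmat(\cpt)\overset{\sim}{\map}\Sigmat(\cpt)$ together with $\Sigmat$ applied to the fact, from Example \ref{SScolocEx}\eqref{colocEx}, that $\iota\otimes\id_{\cpt}$ is homotopic to an isomorphism, gives the needed equivalence after applying $R$).

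The main obstacle is bookkeeping rather than mathematical depth: one must be careful that ``$\iota\otimes\id$ homotopic to an isomorphism'' in Example \ref{SScolocEx}\eqref{colocEx} is about $\iota\otimes\id_{\cpt}\colon\cpt\map\cpt\prot\cpt$, and to conclude about $\k(\iota)=R^\op\Sigmat^\op(\iota)$ one needs that applying $R$ (i.e.\ tensoring with $\Sigmat(\cpt)$) to $\iota\colon\CC\map\cpt$ yields a map equivalent to $\Sigmat(\iota)\otimes\id_{\Sigmat(\cpt)}$ up to the unit constraint $\one\otimes\Sigmat(\cpt)\simeq\Sigmat(\cpt)$ — this is just functoriality and symmetric monoidality of $\Sigmat$, but it needs to be spelled out. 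Given the machinery already in place (Proposition \ref{SScoloc}, Theorem \ref{KK}, Remark \ref{CAlg}) the cleanest route is probably the $\KK$-theoretic one, citing $C^*$-stability of Kasparov theory and Theorem \ref{KK}, which makes the corollary essentially immediate.
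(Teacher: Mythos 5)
Your first argument is exactly the paper's proof: $\iota$ is a $\KK$-equivalence by $C^*$-stability of Kasparov theory, and the fully faithful functor $\KKcat\map\h\ikk$ of Theorem \ref{KK} then forces $\k(\iota)$ to be an isomorphism in $\h\ikk$, hence an equivalence in the stable $\infty$-category $\ikk$. The alternative monoidal argument via coidempotence of $\Sigmat(\cpt)$ is a reasonable variant but is not needed; the paper takes the $\KK$-theoretic route.
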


\begin{proof}
It is well-known that the map $\iota:\CC\map\cpt$ is a $\KK$-equivalence. By the above Theorem \ref{KK} $\k(\iota)$ must descend to an isomorphism in $\h\ikk$.
\end{proof}

\begin{rem} \label{SymMonKK}
 The bivariant $\K$-theory category for separable $C^*$-algebras $\KKcat$ is also a tensor triangulated category, where the tensor structure is induced by the maximal $C^*$-tensor product. The minimal $C^*$-tensor product would have worked equally well over here; see, for instance, \cite{Ambrogio} for some interesting features of this tensor triangulated category. Our project initiated with the noncommutative stable homotopy category $\NSH$ that was shown to be a tensor triangulated category with respect to the maximal $C^*$-tensor product (see Theorem 3.3.7 of \cite{ThomThesis}). Hence for the sake of consistency we have used the maximal $C^*$-tensor product throughout. As before one can view $\KKcat$ as a tensor triangulated subcategory of $\h\ikk$. 
\end{rem}

\subsection{Coproducts and products in $\h\ikk$} For a countable family of separable $C^*$-algebras $\{A_n\}_{n\in\NN}$ the {\em infinite sum} $C^*$-algebra in $\Csep$ is defined as $\oplus_{n\in\NN} A_n :=\dlim_{F\subset\NN} \oplus_{i\in F} A_i$, where $F$ runs through all finite subsets of $\NN$. The infinite sum $C^*$-algebra is neither the coproduct nor the product in $\Csep$. Nevertheless, when viewed inside the Kasparov category $\KKcat$ for {\em separable} $C^*$-algebras it acts as a countable coproduct, i.e., $\KKcat(\oplus_{n\in\NN} A_n, B)\cong\prod_{n\in\NN} \KKcat(A_n,B)$ for any separable $C^*$-algebra $B$ viewed as an object of $\KKcat$ (see Theorem 1.12 of \cite{RosSch}). This result is optimal, i.e., $\KKcat$ does not admit arbitrary coproducts. In sharp contrast we have 

\begin{prop} \label{limits}
The triangulated category $\h\ikk$ admits all small coproducts.
\end{prop}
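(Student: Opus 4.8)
The plan is to deduce the existence of small coproducts in $\h\ikk$ from the compact generation of $\pNSp[\cpt^{-1}]$ established in Proposition \ref{locCompGen}, translated across the opposite-category identification $\ikk = \pNSp[\cpt^{-1}]^\op$. First I would recall that $\pNSp[\cpt^{-1}]$ is a presentable stable $\infty$-category (being an accessible colocalization of the presentable $\pNSp$), hence its homotopy category $\h\pNSp[\cpt^{-1}]$ is a compactly generated triangulated category and in particular admits all small limits and colimits. Therefore $\h\pNSp[\cpt^{-1}]$ has all small \emph{products}, and passing to the opposite category these products become the desired small \emph{coproducts} in $\h\ikk = \h(\pNSp[\cpt^{-1}]^\op) \simeq (\h\pNSp[\cpt^{-1}])^\op$. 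This already proves the statement; the rest is a matter of saying why products exist and why they behave well.

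Concretely, the key steps in order are: (1) observe $\pNSp[\cpt^{-1}]$ is presentable stable, so $\h\pNSp[\cpt^{-1}]$ admits arbitrary products; (2) note that for a stable presentable $\infty$-category $\cC$, products in $\h\cC$ are computed by the products in $\cC$ (limits descend to the homotopy category for products indexed by sets, since a product is a limit over a discrete diagram and such limits are detected objectwise); (3) invoke the identification $\h(\cC^\op) \cong (\h\cC)^\op$, which sends products to coproducts; (4) conclude that $\h\ikk$ has all small coproducts, the coproduct $\coprod_\alpha X_\alpha$ being computed as $\prod_\alpha X_\alpha$ formed in $\pNSp[\cpt^{-1}]$. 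One may also remark, for emphasis, that this contrasts sharply with $\KKcat$ itself, which by Theorem 1.12 of \cite{RosSch} only has countable coproducts: the enlargement to all of $\pNSp[\cpt^{-1}]$ is precisely what buys the arbitrary coproducts, since $\KKcat$ sits inside $\h\ikk$ merely as a full subcategory (see Theorem \ref{KK}) and need not be closed under the ambient coproducts.

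The main (and essentially only) subtlety is the compatibility in step (2): one must be slightly careful that the product in the triangulated homotopy category $\h\pNSp[\cpt^{-1}]$ is genuinely represented by the $\infty$-categorical product, rather than merely existing abstractly. This is standard — products, unlike general colimits, are homotopy limits computed componentwise, and the homotopy category of a stable $\infty$-category inherits these — but it is the one point worth spelling out rather than asserting. Everything else (presentability of the colocalization, the opposite-category formalism) is formal and already packaged in the results cited earlier in the paper, notably Proposition \ref{locCompGen} and Remark 1.4.4.3 of \cite{LurHigAlg}. I would therefore expect the written proof to be short: one sentence invoking presentability and compact generation of $\pNSp[\cpt^{-1}]$, one sentence on products descending to the homotopy category, and one sentence dualizing.
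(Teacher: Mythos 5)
Your argument is correct, and it rests on the same two pillars as the paper's (presentability forces small products in the homotopy category of a stable $\infty$-category, and dualization turns these into coproducts), but you apply them to a different category. The paper argues inside the ambient category: $\pNSp$ is presentable, so $\h\pNSp^\op$ has all small coproducts, and since $R^\op$ is a \emph{smashing} localization it preserves coproducts, whence its image $\h\ikk$ is closed under the ambient coproducts (citing Remark 5.5.2 of \cite{KraLoc}). You instead note that $\pNSp[\cpt^{-1}]$ is itself compactly generated, hence presentable, by Proposition \ref{locCompGen}, and dualize directly. Both are valid; your route is the shorter one given that Proposition \ref{locCompGen} has already been established, while the paper's route yields the slightly stronger and occasionally useful fact that the coproduct in $\h\ikk$ is computed in $\h\pNSp^\op$, i.e., the colocal subcategory is closed under the ambient coproducts rather than merely admitting coproducts of its own. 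Your step (2) --- that set-indexed products in a stable $\infty$-category descend to genuine products in the triangulated homotopy category because $\pi_0$ of mapping spaces commutes with products --- is exactly the point the paper elides when it passes from ``admits all small limits'' to ``$\h\pNSp$ admits all small products,'' so spelling it out is fine and not a gap.
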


\begin{proof}
Being an accessible localization of the stable presentable $\infty$-category $\Sp(\iNS)$, the stable $\infty$-category $\pNSp$ is itself presentable. It also follows from Corollary 5.5.2.4 of \cite{LurToposBook} that it admits all small limits whence the triangulated homotopy category $\h \pNSp$ admits all small products. Therefore, $\h \pNSp^\op$ admits all small coproducts. Now by construction $R:\pNSp\map \pNSp$ is a smashing colocalization, whose essential image is $\ikk^\op$. It follows that $R^\op:\h\pNSp^\op\map \h \pNSp^\op$ is a smashing localization of triangulated categories, which preserves small coproducts. Hence its image $\h\ikk$ is closed under taking small coproducts (see Remark 5.5.2 of \cite{KraLoc}).
\end{proof}

\begin{rem} \label{AlgKtheory}
The above construction of bivariant $\K$-theory is quite flexible and can be carried out purely in the algebraic setting. For instance, let $k$ be a commutative ring with unit. Consider a small full subcategory of the category of $k$-algebras with algebraic homotopy equivalences as a category with weak equivalences. One technical point is to ensure that the chosen subcategory admits finite homotopy limits. Applying the Dwyer--Kan localization one obtains a simplicial category. Taking the fibrant replacement of this simplicial category in the model structure on simplicial categories constructed in \cite{BerModel} and applying the homotopy coherent nerve to it produces an $\infty$-category. The (algebraic) homotopy equivalences between $k$-algebras become equivalences in this $\infty$-category. Now one can follow the steps as above replacing $\cpt$-stability by matrix stability that needs to be enforced by (co)localization.
\end{rem}

\subsection{Brown representability in $\h\ikk$ and its dual $\h\ikk^\op$}
Let $\cT$ be a triangulated category with arbitrary coproducts. A {\em localizing} subcategory of $\cT$ is a thick subcategory that is closed under taking small coproducts. Following \cite{KraBR} one says that $\cT$ is {\em perfectly generated} by a small set $X$ of objects of $\cT$ provided the following holds:

\begin{enumerate}
 \item There is no proper localizing subcategory of $\cT$ containing all the objects in $X$,
 \item given a countable family of morphisms $\{X_i\map Y_i\}_{i\in I}$ in $\cT$, such that the map $\cT(C,X_i)\map \cT(C,Y_i)$ is surjective for all $C\in X$ and $i\in I$, the induced map $$\cT(C,{\coprod}_i X_i)\map\cT(C,{\coprod}_i Y_i)$$ is surjective. 
\end{enumerate}

\noindent
Predictably a triangulated category $\cT$ with coproducts is called {\em perfectly cogenerated} if $\cT^\op$ is perfectly generated by some small set of objects. Finally, a triangulated category is called {\em compactly generated} if it is perfectly generated by a small set of compact objects. Recall that an object $C$ of $\cT$ is {\em compact} if $\coprod_{j\in J} \cT(C,D_j)\cong\cT(C,\coprod_{j\in J} D_j)$ for every set indexed family of objects $\{D_j\}_{j\in J}$ of $\cT$. A very intuitive and equivalent definition of compact generation is the following: a triangulated category $\cT$ admitting small coproducts is {\em compactly generated} if there is a small set $T$ of compact objects that generate $\cT$, i.e., each $X\in T$ is compact and $\cT(X,Y)=0$ for every $X\in T$ implies that $Y$ is itself $0$ (see Definition 1.7 of \cite{NeeGroDual}). For the corresponding notion in the setting of $\infty$-categories see Section 5.5.7 of \cite{LurToposBook}.

\begin{lem} \label{kkcompgen}
 The triangulated category $\h\pNSp[\cpt^{-1}]=\h\ikk^\op$ is compactly generated.
\end{lem}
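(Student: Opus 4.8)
The plan is to reduce the claim to the compact generation results already established for $\pNSp$ in Proposition \ref{Cgen}, combined with the formal behaviour of smashing (co)localizations on compactly generated triangulated categories. First I would recall that, by Proposition \ref{locCompGen} (applied with $A=\cpt$, using that $\Sigmat(\cpt)$ is coidempotent in $\pNSp$), the stable $\infty$-category $\pNSp[\cpt^{-1}]$ is compactly generated and closed symmetric monoidal; hence its homotopy category $\h\pNSp[\cpt^{-1}]$ is a compactly generated triangulated category. Since by definition $\ikk = \pNSp[\cpt^{-1}]^\op$, the statement $\h\pNSp[\cpt^{-1}] = \h\ikk^\op$ is just an unravelling of notation, and the Lemma follows immediately.

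For a more self-contained argument not merely citing Proposition \ref{locCompGen}, I would spell out the colocalization picture. By Proposition \ref{Cgen}, $\pNSp$ is compactly generated, so $\h\pNSp$ is a compactly generated triangulated category. The functor $R = R_\cpt\colon\pNSp\map\pNSp$, $X\mapsto X\otimes\Sigmat(\cpt)$, is a smashing colocalization, so at the level of homotopy categories $R\colon\h\pNSp\map\h\pNSp$ is a coproduct-preserving colocalization with essential image $\h\pNSp[\cpt^{-1}]$. From the functorial cofiber triangle $R(X)\map X\map L(X)\map\Sigma R(X)$ one sees the complementary localization $L = L_\cpt$ also preserves coproducts, so $\ker(R) = \Im(L)$ is a localizing subcategory, and by Miyachi–Neeman–Krause localization (Remark 5.5.2 and Theorem 5.6.1 of \cite{KraLoc}) the Verdier quotient $\h\pNSp/\ker(R)\simeq\Im(R) = \h\pNSp[\cpt^{-1}]$ is again compactly generated. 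This is exactly the kind of argument already run in the proof of Proposition \ref{locCompGen}, so I would keep it brief and point to that proof.

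The only genuine subtlety — which is really the heart of the matter — is ensuring that the colocalization $R_\cpt$ is \emph{smashing}, i.e. that $R_\cpt(X)\simeq X\otimes R_\cpt(\one)$ and that this is compatible with coproducts; this is what guarantees the complementary localizing subcategory $\ker(R)$ is generated by a \emph{set} of compact objects (the cones of $\one\to\Sigmat(\cpt)$ smashed with compact generators), which is precisely the input needed to conclude compact generation of the quotient rather than mere well-generatedness. But this smashing property is exactly the content of the cited Proposition 3.4 of \cite{MyNSHLoc} together with the fact (noted just before the Lemma, via an argument analogous to Lemma 3.3 of \cite{MyNSHLoc}) that $\Sigmat(\cpt)$ is a coidempotent object of $\pNSp$. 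So I expect no real obstacle here: the proof is a short citation of Proposition \ref{locCompGen}, optionally expanded by repeating the one-paragraph localization-theoretic argument from its proof, with the smashing property of $R_\cpt$ as the single nontrivial ingredient, itself already in hand.
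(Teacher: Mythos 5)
Your proof is correct and is essentially identical to the paper's: the paper disposes of this Lemma in one line by citing Proposition \ref{locCompGen} (applied to $A=\cpt$), and your expanded localization-theoretic argument is exactly the one already contained in the proof of that Proposition.
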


\begin{proof}
 The assertion is a consequence of Proposition \ref{locCompGen}.
\end{proof}

Recall that a triangulated category is said to be {\em topological} if it is triangle equivalent to the homotopy category of a stable cofibration category (see Definition 1.4 of \cite{SchTopTri}).

\begin{prop} \label{KKtop}
 Kasparov category $\KKcat$ is topological.
\end{prop}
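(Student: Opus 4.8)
The plan is to deduce this from the fully faithful embedding $\theta:\KKcat\hookrightarrow\h\ikk$ constructed in Theorem~\ref{KK}, together with the fact that $\ikk$ is a stable $\infty$-category, plus a closure-under-retracts argument. Recall that a triangulated category is topological precisely when it is triangle equivalent to the homotopy category of a stable cofibration category, and a standard source of such is the homotopy category of any stable $\infty$-category: indeed $\h\ikk$, being the homotopy category of the stable $\infty$-category $\ikk$, is topological (a stable $\infty$-category gives rise to a stable model category/cofibration category on its associated categories of diagrams, or one invokes Theorem~7.12 / the discussion in \cite{SchTopTri} identifying homotopy categories of stable $\infty$-categories as topological triangulated categories). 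So $\h\ikk$ is topological; the remaining point is that $\KKcat$ sits inside $\h\ikk$ as a \emph{full} triangulated subcategory, and we must argue that a full triangulated subcategory of a topological triangulated category is again topological.

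First I would record that $\theta:\KKcat\to\h\ikk$ is a fully faithful exact (triangulated) functor onto its essential image, call it $\cK\subset\h\ikk$, which is a strictly full triangulated subcategory (it is closed under the triangulated structure because $\theta$ is exact and $\KKcat$ is triangulated, hence $\cK$ is closed under shifts and cones up to isomorphism). The subtlety is that a full triangulated subcategory need not be closed under retracts (idempotent completion) inside the ambient category, and the cleanest statements about topological triangulated categories concern idempotent-complete ones or ones arising on the nose from a cofibration category. However, $\KKcat$ is itself idempotent complete — this is classical for the Kasparov category of separable $C^*$-algebras (existence of countable direct sums and the standard Karoubi-completeness argument, e.g.\ via \cite{MeyNes}) — so $\cK$ is a thick subcategory of $\h\ikk$. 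Then I would invoke the relevant lifting property: given a stable cofibration category $\mathcal E$ with $\h\mathcal E\simeq\h\ikk$ (which exists since $\h\ikk$ is topological), the full subcategory of $\mathcal E$ spanned by objects whose images lie in $\cK$ is again a stable cofibration category (it is closed under the relevant homotopy pushouts/pullbacks and shifts because $\cK$ is a triangulated subcategory and these operations are detected on homotopy categories), and its homotopy category is triangle equivalent to $\cK\simeq\KKcat$.

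Alternatively, and perhaps more transparently in the $\infty$-categorical language the paper favors, I would argue as follows: the essential image of $\k:\iCsep\to\ikk$ generates a full stable $\infty$-subcategory $\cK_\infty\subset\ikk$ (closed under finite limits/colimits and retracts inside $\ikk$), which is again a stable $\infty$-category; by Theorem~\ref{KK} its homotopy category $\h\cK_\infty$ contains $\KKcat$ as a full triangulated subcategory, and since $\KKcat$ is idempotent complete while $\h\cK_\infty$ is the homotopy category of a stable $\infty$-category that is generated under finite colimits and retracts by the image of separable $C^*$-algebras, one checks $\KKcat\simeq\h\cK_\infty$ on the nose. Then $\KKcat$ is the homotopy category of a stable $\infty$-category, hence topological by the same general principle applied to $\cK_\infty$.

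The main obstacle I anticipate is precisely the bookkeeping around idempotent completion and the passage from ``$\KKcat$ is a full triangulated subcategory of something topological'' to ``$\KKcat$ is topological'': one genuinely needs to produce (or cite) a stable cofibration/model presentation of $\h\ikk$ and then restrict it compatibly to the subcategory, and one must be careful that the restricted cofibration category is still \emph{stable} (closed under the loop/suspension equivalence) — this uses that $\cK$ is a triangulated, not merely additive, subcategory. I would expect the paper to dispatch this by a direct reference to the machinery of \cite{SchTopTri} (e.g.\ that homotopy categories of stable $\infty$-categories are topological, combined with the stability of the subcategory), so the proof should be short, but the conceptual content sits in that lifting step.
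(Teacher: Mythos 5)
Your argument is essentially the paper's: Theorem~\ref{KK} exhibits $\KKcat$ as a full triangulated subcategory of $\h\ikk$, which is topological because $\pNSp[\cpt^{-1}]$ is a stable presentable $\infty$-category, and the paper then concludes by the same restriction argument (citing the proof of Theorem 2.27 of \cite{MyNSH}, and handling the passage through opposite categories via self-duality of stable model categories). Your detour through idempotent completeness is unnecessary in Schwede's cofibration-category framework, where arbitrary full triangulated subcategories of topological triangulated categories are again topological by restricting the cofibration category, but it does no harm.
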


\begin{proof}
 The above Theorem \ref{KK} shows that $\KKcat$ is equivalent to a full triangulated subcategory of $\h\ikk=\h\pNSp[\cpt^{-1}]^\op$ via the functor $\k$. Since $\pNSp[\cpt^{-1}]$ is a stable presentable $\infty$-category (see Lemma \ref{kkcompgen}) one can establish the result following the proof of Theorem 2.27 of \cite{MyNSH}.
\end{proof}

\begin{rem}
Note that our methods actually show that both $\KKcat$ and $\KKcat^\op$ are topological. Indeed, our methods exhibit $\KKcat^\op$ naturally as a full triangulated subcategory of a stable model category whence it is topological. Since the notion of a stable model category is self-dual, one also deduces that $\KKcat$ is topological.
\end{rem}

\begin{prop}
 The triangulated category $\h\ikk$ is perfectly generated.
\end{prop}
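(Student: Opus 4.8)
The plan is to deduce this from the Brown representability / perfect (co)generation facts already assembled in the excerpt, rather than to reprove anything from scratch. First I would observe that $\h\ikk$ is, by Proposition \ref{limits}, a triangulated category admitting all small coproducts, and that by Remark \ref{BrownRepr} (together with the passage to the colocalization) it satisfies Brown representability; the object is to promote this to the statement that it is \emph{perfectly generated} in the sense of \cite{KraBR} recalled just above.

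The key steps, in order, would be: (i) recall from Lemma \ref{kkcompgen} that $\h\ikk^\op = \h\pNSp[\cpt^{-1}]$ is compactly generated; (ii) invoke the general principle, due to Krause \cite{KraBR}, that a compactly generated triangulated category is in particular perfectly \emph{cogenerated}, so that $\h\ikk^\op$ is perfectly cogenerated; (iii) by the very definition of perfect cogeneration, $(\h\ikk^\op)^\op \simeq \h\ikk$ is then perfectly generated. Alternatively — and this is the route I would actually write up as the cleaner one — one works at the level of $\infty$-categories: $\pNSp[\cpt^{-1}]$ is compactly generated (Proposition \ref{locCompGen}), hence presentable, hence its opposite $\ikk$ is a stable $\infty$-category whose homotopy category admits all small coproducts (Proposition \ref{limits}); a small set of cogenerators of $\h\pNSp[\cpt^{-1}]$ gives, upon passing to the opposite, a small set $X$ of objects of $\h\ikk$ satisfying condition (1) of perfect generation, and condition (2) — the surjectivity statement about $\h\ikk(C,\coprod_i X_i)\map\h\ikk(C,\coprod_i Y_i)$ for countable families — follows because countable coproducts in $\h\ikk$ are computed as countable products in $\h\pNSp[\cpt^{-1}]$, where the corresponding surjectivity is automatic from compactness of the cogenerators. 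One should double-check that the cogenerators produced this way are genuinely a \emph{set} and that the countability restriction in condition (2) is respected; both are immediate here since $\pNSp[\cpt^{-1}]$ is $\omega$-compactly generated.

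I do not expect a serious obstacle: the statement is essentially a formal consequence of compact generation of the opposite category together with the self-duality built into the definitions of perfect generation versus perfect cogeneration. The only point requiring a line of care is bookkeeping of op's — making sure that ``compactly generated'' for $\h\pNSp[\cpt^{-1}]$ translates into ``perfectly generated'' (not ``compactly generated'', which would be false since $\h\ikk$ need not have compact generators once one has passed to the opposite of a compactly generated category) for $\h\ikk$. So the write-up will be a short paragraph: cite Lemma \ref{kkcompgen}, note that compactly generated implies perfectly cogenerated \cite{KraBR}, and conclude by definition that the opposite is perfectly generated; and for the countable-family condition, point to the fact that the relevant coproducts in $\h\ikk$ are products in $\h\pNSp[\cpt^{-1}]$ on which compact objects are well-behaved.
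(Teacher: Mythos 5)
Your first route --- cite Lemma \ref{kkcompgen} for compact generation of $\h\ikk^\op$, invoke Krause's result that a compactly generated triangulated category is perfectly cogenerated, and conclude by the definitional duality that $\h\ikk$ is perfectly generated --- is exactly the paper's proof, and it is correct.

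However, the ``alternative'' that you say you would actually write up is not a valid shortcut, and you should not substitute it for the citation. The implication ``compactly generated $\Rightarrow$ perfectly cogenerated'' is a genuine theorem of Krause, not a formal consequence of unwinding op's. Two specific points fail in your sketch. First, the perfect cogenerators produced by Krause's theorem are \emph{not} the compact generators of $\h\pNSp[\cpt^{-1}]$ viewed in the opposite category; it is not clear that the compact generators generate $\h\ikk$ as a localizing subcategory (equivalently, generate $\h\ikk^\op$ colocalizingly), so condition (1) is not settled by your argument. Second, and more seriously, condition (2) translated back to $\h\pNSp[\cpt^{-1}]$ concerns surjectivity of $\Hom(\prod_i X_i, C)\map\Hom(\prod_i Y_i, C)$, i.e.\ maps \emph{out of products}, whereas compactness controls maps \emph{out of compact objects into coproducts}; these are dual shapes and compactness says nothing directly about the former. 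Closing that gap is precisely the content of Krause's theorem (whose proof goes through Brown representability and pure-injectivity-type arguments). So keep the three-line proof from your first route and drop the alternative.
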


\begin{proof}
 The above Lemma shows that $\h\ikk^\op$ is compactly generated. Thus it follows that $\h\ikk^\op$ is perfectly cogenerated (see Section 5.3 of \cite{KraLoc}) whence $\h\ikk$ is perfectly generated.
\end{proof}

\begin{thm}
 Both $\h\ikk$ and $\h\ikk^\op$ satisfy Brown representability, i.e., a functor $F:\cT^\op\map\Ab$ is cohomological and sends all coproducts in $\cT$ to products in $\Ab$ if and only if $F(-)\cong\cT(-,X)$ for some object $X\in\cT$, where $\cT=\h\ikk$ or $\h\ikk^\op$.
\end{thm}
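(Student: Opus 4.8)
The plan is to deduce both halves of the statement from the abstract Brown representability theorem for perfectly generated triangulated categories, applied to $\cT=\h\ikk^\op$ and to $\cT=\h\ikk$ respectively; all the hypotheses needed have in fact just been verified, so the proof will be short. The tool I would invoke is Krause's theorem \cite{KraBR} (equivalently Neeman's formulation \cite{NeeBook}): if a triangulated category $\cT$ admits all small coproducts and is perfectly generated, then a cohomological functor $F:\cT^\op\map\Ab$ is representable if and only if it sends coproducts in $\cT$ to products in $\Ab$.

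First I would treat $\cT=\h\ikk^\op$. By Lemma \ref{kkcompgen} this category is compactly generated, hence in particular perfectly generated (a set of compact generators is a perfect generating set in the sense recalled above). It also has all small coproducts: $\pNSp[\cpt^{-1}]$ is the essential image of the smashing colocalization $R$ of the presentable $\infty$-category $\pNSp$, so it is closed under coproducts computed in $\pNSp$ while its products are obtained by applying $R$ to products in $\pNSp$; passing to homotopy categories yields all small coproducts in $\h\ikk^\op$. Krause's theorem then applies directly and gives Brown representability for $\h\ikk^\op$.

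Next I would treat $\cT=\h\ikk$. Here the preceding Proposition records that $\h\ikk$ is perfectly generated --- obtained from $\h\ikk^\op$ being perfectly cogenerated, which is itself a consequence of its compact generation --- and Proposition \ref{limits} supplies all small coproducts in $\h\ikk$. A second application of Krause's theorem yields Brown representability for $\h\ikk$. Alternatively one could transfer the property from $\h\pNSp^\op$, which satisfies Brown representability by Remark \ref{BrownRepr}, along the smashing localization $R^\op$; but the perfect-generation route is cleaner given what is already in place.

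There is essentially no new mathematical content: the argument strings together the results of this section with Krause's theorem. The only points I would be careful about are that Brown representability is not self-dual, so for the $\h\ikk$ case one genuinely needs perfect generation of $\h\ikk$ itself --- not of $\h\ikk^\op$ --- which is precisely why the separate Proposition was stated; and that ``coproducts in $\cT$'' must be read as the honest coproducts, i.e.\ colimits in the relevant stable $\infty$-category passed to the homotopy category, which is harmless since stable $\infty$-categorical colimits descend to coproducts in the triangulated homotopy category. I do not anticipate any real obstacle; the statement is a clean corollary of the machinery developed above.
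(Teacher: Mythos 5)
Your proof is correct and follows essentially the same route as the paper: Brown representability for $\h\ikk^\op$ from its compact generation (Lemma \ref{kkcompgen}, via Neeman or equivalently Krause's theorem specialized to compact generators), and for $\h\ikk$ from its perfect generation together with Theorem A of \cite{KraBR}. Your cautionary remark about the non-self-duality of Brown representability is exactly the point the paper's separate perfect-generation proposition is designed to address.
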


\begin{proof}
We already observed that $\h\ikk^\op$ satisfies Brown representability due to its compact generation \cite{NeeBook}. Thanks to its perfect generation Brown representability for $\h\ikk$ follows from Theorem A of \cite{KraBR}.
\end{proof}

\begin{cor} \label{triprod}
 The triangulated category $\h\ikk$ admits all small products.
\end{cor}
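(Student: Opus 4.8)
The plan is to deduce the existence of small products in $\h\ikk$ directly from Brown representability, which was just established. This is a standard argument: Brown representability for a triangulated category with coproducts automatically yields all products.

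First I would fix a small family of objects $\{X_j\}_{j\in J}$ in $\cT := \h\ikk$ and consider the functor
\[
F(-) = \prod_{j\in J} \cT(-, X_j) : \cT^\op \map \Ab.
\]
Each $\cT(-, X_j)$ is a cohomological functor (this is a formal property of any triangulated category), and a product of cohomological functors is cohomological since products in $\Ab$ are exact. Moreover, $F$ sends coproducts in $\cT$ to products in $\Ab$: indeed, $\cT(\coprod_i A_i, X_j) \cong \prod_i \cT(A_i, X_j)$ by the universal property of the coproduct, and products commute with products. Hence $F$ satisfies the hypotheses of the Brown representability statement just proved for $\h\ikk$.

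Therefore there exists an object $P \in \cT$ with $F(-) \cong \cT(-, P)$, i.e.
\[
\cT(-, P) \cong \prod_{j\in J} \cT(-, X_j).
\]
By the Yoneda lemma this object $P$ together with the components $P \map X_j$ (corresponding under the isomorphism to the identity of $P$) is precisely a product $\prod_{j\in J} X_j$ in $\cT$. Since $J$ was an arbitrary small index set, $\h\ikk$ admits all small products.

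I do not anticipate a genuine obstacle here; the only point requiring a little care is verifying that $F$ really does convert coproducts to products, but this is immediate from the universal property of coproducts in $\cT$ and the commutation of products with products in $\Ab$. One could alternatively invoke the dual observation that $\h\ikk^\op$ is compactly generated (Lemma \ref{kkcompgen}), hence admits coproducts by Neeman's representability theorem, which again gives products in $\h\ikk$; but the argument via Brown representability for $\h\ikk$ itself is the most economical and is the one I would record.
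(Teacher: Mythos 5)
Your argument is correct and is exactly the standard deduction of products from Brown representability that the paper outsources to Remark 5.1.2 (2) of \cite{KraLoc}; the paper's proof is just that citation. Nothing further is needed.
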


\begin{proof}
We refer the readers to Remark 5.1.2 (2) of \cite{KraLoc}.
\end{proof}

\begin{rem} \label{nonsep}
As we mentioned before there is a bivariant $\K$-theory category specifically designed for nonseparable $C^*$-algebras $\kkc$ that was constructed in \cite{CunMeyRos}. Our formalism also covers the bivariant $\K$-theory of nonseparable $C^*$-algebras (or pointed noncommutative compact Hausdorff spaces). Indeed, any nonseparable $C^*$-algebra gives rise to a filtered diagram of its separable $C^*$-subalgebras. Now one can take its filtered colimit in $\iNS^\op$ after applying $j^\op:\iCsep\functor\iNS^\op$. Finally one can apply the functor $\k:\iNS^\op\functor\ikk$ to land inside $\ikk$. Our bivariant $\K$-theory for genuinely nonseparable $C^*$-algebras will in general not agree with that of \cite{CunMeyRos}, although both $\kkc$ and $\h\ikk$ contain Kasparov category for separable $C^*$-algebras $\KKcat$ as a fully faithful triangulated subcategory. Clearly $\h\ikk$ has better formal properties (see Remark 8.29 of \cite{CunMeyRos}). Moreover, in the stable $\infty$-category $\ikk$ one can compute limits and colimits, which carry more refined information than the weak (co)limits and sequential homotopy (co)limits in the triangulated category $\h\ikk$ or $\kkc$.
\end{rem}

\begin{rem}
 It would be interesting to understand the relationship between $\h\ikk$ and the bivariant $\K$-theory for $\sC$-algebras \cite{CunGenBivK} (see also \cite{MyTwist}). Note that the notion of a $\sC$-algebra is more restrictive than that of an arbitrary noncommutative (pointed) space. 
\end{rem}

\section{Bootstrap category in $\ikk$} \label{bkk}
The Universal Coefficient Theorem (UCT) is a milestone in the development of bivariant $\K$-theory and it is very natural to seek a generalization of this result beyond the category of separable $C^*$-algebras (or that of pointed noncommutative compact metrizable spaces). The original construction of the UCT class in Kasparov bivariant $\K$-theory category is due to Rosenberg--Schochet \cite{RosSch}. A separable $C^*$-algebra $A$ belongs to the UCT class if for every $B\in\Csep$ there is a natural short exact sequence of $\ZZ/2$-graded abelian groups $$0\map \Ext_*(\K_{*+1}(A),\K_{*}(B))\map\KK_*(A,B)\map \Hom_*(\K_*(A),\K_*(B))\map 0.$$ The UCT class can also be characterized as consisting of those $C^*$-algebras, which lie in the replete triangulated subcategory of $\KKcat$ generated by $\CC$ that is also closed under countable coproducts. Henceforth we set $\csus =\C_0((0,1))$ for notational clarity and it follows from Lemma 2.9 of \cite{MyNSH} that $\csus \prot (-) \cong \Sigma_{\iNS}(-)$. The following result generalizes the arguments of the proof of Bott periodicity in \cite{CunBott}. A nice observation made by the anonymous referee enabled us to formulate the result in its current form and streamline the proof.

\begin{prop}
There is an isomorphism of endofunctors $\Sigma_{\ikk}^{-2}(-) \cong \Id(-)$ of $\h\ikk$.
\end{prop}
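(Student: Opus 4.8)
The plan is to reduce the statement to a $\KK$-theoretic Bott periodicity statement that is already available, transported across the fully faithful embeddings constructed earlier, and then to promote it from an isomorphism on objects to an isomorphism of endofunctors. First I would recall that $\Sigma_{\iNS}(-)\cong\csus\prot(-)$, so that the suspension endofunctor $\Sigma_{\ikk}$ of $\h\ikk$ is (up to the usual dualisation) computed by tensoring with $\k(\csus)$; concretely, since $\k$ is symmetric monoidal (Theorem \ref{klim}) and turns the smash product of $\iNS^\op$ into that of $\ikk$, one has $\Sigma_{\ikk}(\k(X))\simeq\k(\csus)\otimes\k(X)$ and hence $\Sigma_{\ikk}^{-2}(\k(X))\simeq\k(\csus)^{\otimes -2}\otimes\k(X)$. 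Thus it suffices to exhibit an equivalence $\k(\csus)^{\otimes -2}\simeq\one_{\ikk}$ in $\h\ikk$, i.e.\ an equivalence $\k(\csus)^{\otimes 2}\simeq\one_{\ikk}$, compatibly with the monoidal structure so that tensoring with it gives a natural equivalence of endofunctors.

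The key input is that the classical Bott element and its inverse, as constructed in \cite{CunBott}, already live inside $\KKcat$: there are classes $\beta\in\KK(\CC,\csus^{\otimes 2})=\KK_0(\CC,\C_0(\RR^2))$ and $\alpha\in\KK(\csus^{\otimes 2},\CC)$ with $\alpha\circ\beta=\id_{\CC}$ and $\beta\circ\alpha=\id_{\csus^{\otimes 2}}$ in $\KKcat$. By Theorem \ref{KK} the functor $\k$ embeds $\KKcat$ fully faithfully into $\h\ikk$, so $\k(\beta)$ and $\k(\alpha)$ are mutually inverse morphisms in $\h\ikk$, giving an isomorphism $\k(\CC)\cong\k(\csus^{\otimes 2})$. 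Since $\k(\CC)=\one_{\ikk}$ is the unit and $\k$ is monoidal, this is exactly an equivalence $\one_{\ikk}\simeq\k(\csus)^{\otimes 2}$. The referee's reformulation presumably amounts to organising $\beta,\alpha$ so that this is visibly an equivalence of coidempotent/invertible objects; concretely $\k(\csus)$ becomes an invertible object of the symmetric monoidal category $\h\ikk$ with inverse $\k(\csus)$ itself (via $\alpha$), and tensoring with an invertible object together with its inverse-equivalence is automatically a natural equivalence of endofunctors. This upgrades the object-level statement to $\Sigma_{\ikk}^{-2}(-)\cong\Id(-)$ as endofunctors of $\h\ikk$.

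Two points need care. First, one must check the statement on \emph{all} of $\h\ikk$, not just on the image of $\k$: but $\pNSp[\cpt^{-1}]$ is compactly generated (Lemma \ref{kkcompgen}), its compact generators come from $\iCsep^\op$, and $\Sigma_{\ikk}^{-2}(-)=\k(\csus)^{\otimes-2}\otimes(-)$ is by construction an exact colimit-preserving endofunctor; an exact colimit-preserving endofunctor equipped with a natural transformation to the identity is an equivalence as soon as it is one on a generating set, and on generators it is the object-level Bott isomorphism above. Second, the natural transformation itself: the candidate is $\id\otimes\k(\beta)\colon X\simeq X\otimes\one\to X\otimes\k(\csus)^{\otimes 2}$, and its naturality is immediate because $\k(\beta)$ is a fixed morphism and $\otimes$ is a bifunctor; that $\k(\alpha)$ provides a two-sided inverse at the level of the natural transformation follows from $\alpha\circ\beta=\id$, $\beta\circ\alpha=\id$ in $\KKcat$ together with full faithfulness of $\k$ and the interchange law in the monoidal category. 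The main obstacle I anticipate is purely bookkeeping: making sure the suspension $\Sigma_{\ikk}$ is identified with $\k(\csus)\otimes(-)$ \emph{with the correct variance} — $\iNS^\op\to\ikk$ reverses arrows, so $\Sigma_{\iNS}$ corresponds to the desuspension side in $\ikk$ — and that $\csus^{\otimes 2}$ is $\C_0(\RR^2)$ rather than, say, $\C_0((0,1)^2)$ up to the evident homeomorphism; none of this is hard, but it is where sign/variance errors would creep in. Everything else is a transport of the Cuntz proof of Bott periodicity \cite{CunBott} through the fully faithful monoidal embedding $\KKcat\hookrightarrow\h\ikk$.
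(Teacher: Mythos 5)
Your proposal is correct, and its first half --- identifying $\Sigma_{\ikk}^{-2}(-)$ with $\k(\csus)\otimes\k(\csus)\otimes(-)$ via $\Sigma_{\ikk}^{-1}\one\simeq\k(\csus)$ and reducing the proposition to the single equivalence $\k(\csus)\otimes\k(\csus)\simeq\k(\CC)$ --- is exactly the paper's reduction. (Minor variance slip: since $\k$ reverses arrows, it is the \emph{desuspension} $\Sigma_{\ikk}^{-1}(-)$ that is $\k(\csus)\otimes(-)$, not the suspension; as you note yourself, this is immaterial for the target statement because $\k(\csus)^{\otimes 2}\simeq\one$ holds if and only if $\k(\csus)^{\otimes -2}\simeq\one$.) Where you genuinely diverge is in establishing that equivalence. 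You import the finished statement of Bott periodicity in $\KKcat$ --- the mutually inverse classes $\beta\in\KK(\CC,\csus^{\otimes 2})$ and $\alpha\in\KK(\csus^{\otimes 2},\CC)$ --- and transport the resulting isomorphism through the fully faithful monoidal functor $\k$ of Theorem \ref{KK}. The paper instead reruns Cuntz's \emph{argument} inside $\h\ikk$: it applies $\k$ to the reduced Toeplitz extension $0\map\cpt\map T_0\map\csus\map 0$, which becomes an exact triangle by the built-in excision for semisplit extensions, uses Theorem \ref{KK} only for the two elementary inputs that $T_0$ is $\KK$-contractible (so $\k(T_0)\simeq 0$) and that $\k(\cpt)\simeq\k(\CC)$ (Corollary \ref{Kstable}), and then reads off $\h\ikk(X,\k(\csus)\otimes\k(\csus))\cong\h\ikk(X,\k(\CC))$ for all $X$ from the long exact sequence, concluding by Yoneda. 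Both routes ultimately rest on Theorem \ref{KK}; yours is shorter but consumes the full strength of Bott periodicity as a black box, while the paper's isolates the minimal $\KK$-theoretic inputs and exhibits the Toeplitz mechanism directly in $\h\ikk$ --- which is why the paper bills the proposition as generalizing the \emph{arguments} of \cite{CunBott} rather than quoting its statement. One simplification: your compact-generation/colimit-preservation discussion for extending beyond the image of $\k$ is unnecessary; once $\one\simeq\k(\csus)^{\otimes 2}$ as objects of the symmetric monoidal category $\h\ikk$, tensoring with this fixed isomorphism already yields the natural isomorphism of endofunctors on all of $\h\ikk$, just as the paper's Yoneda step does.
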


\begin{proof}
Since $\k(\CC)$ is the tensor unit of $\h\ikk$, we have $\Sigma^{-1}_{\ikk}(-)\cong\Sigma^{-1}_{\ikk}\k(\CC)\otimes (-)$ and $\k(\CC)\otimes (-)\cong \Id(-)$. Note that $\Sigma^{-1}_{\ikk}\k(\CC) =\Omega_{\ikk} \k(\CC)\simeq \k(\Sigma_{\iNS} \CC)\simeq \k (\csus\prot \CC)\simeq \k (\csus)$. Here we have used that fact that $\csus\prot\CC \simeq \Sigma_{\iNS} \CC$ in $\iNS$ (see Lemma 2.9 of \cite{MyNSH}). Hence there is an isomorphism of endofunctors $\Sigma^{-2}_{\ikk} (-)\cong\k(\csus)\otimes\k(\csus) \otimes(-)$. Thus it suffices to show that $\k(\csus)\otimes\k(\csus) \cong \k(\CC)$ in $\h\ikk$. 

Consider the reduced Toeplitz extension $0\map\cpt\map T_0\map\csus\map 0$. Applying the functor $\k$ we obtain a diagram  $\k(\cpt)\map\k(T_0)\map\k(\csus)$ in $\ikk$. It is a (co)fiber sequence in $\ikk$ thanks to excision with respect to semisplit extenstions and it gives rise to an exact triangle in $\h\ikk$. For any $X\in\h\ikk$ applying the functor $\h\ikk(X,-)$ we get a long exact sequence 
$$\cdots\map\h\ikk(X,\k(\cpt))\map\h\ikk(X,\k(T_0))\map \h\ikk(X,\k(\csus))\map\cdots.$$  Hence we get a boundary map $$\h\ikk(X,\k(\csus)\otimes\k(\csus))\map\h\ikk(X,\k(\cpt)).$$ Now we claim that
\begin{enumerate}
 \item $\h\ikk(X,\k(T_0))=0$ and
 \item $\h\ikk(X,\k(\cpt))\cong\h\ikk(X,\k(\CC))$.
 \end{enumerate} Since the reduced Toeplitz algebra $T_0$ is $\KK$-equivalent to $0$, we get (1) from Theorem \ref{KK}. For (2) we simply invoke Corollary \ref{Kstable}. Thus we have shown that $$\h\ikk(X,\k(\csus)\otimes\k(\csus))\cong\h\ikk(X,\k(\CC))$$ for every $X\in\h\ikk$. Using the Yoneda Lemma the assertion follows.
 \end{proof}

\begin{lem}
 For any $A\in\Csep$ there are natural isomorphisms: $$\text{$\h\ikk(\k(\CC),\k(A))\cong\K_0(A)$ and $\h\ikk(\k(\csus),\k(A))\cong\K_1(A)$.}$$
\end{lem}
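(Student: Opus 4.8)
The plan is to reduce both isomorphisms to Theorem \ref{KK} together with the standard Kasparov-theoretic computations $\KK_0(\CC,A)\cong\K_0(A)$ and $\KK_0(\csus,A)\cong\K_1(A)$, all natural in $A$. For the first isomorphism I would specialize Theorem \ref{KK} to the first variable equal to $\CC$, obtaining a natural isomorphism $\h\ikk(\k(\CC),\k(A))\cong\KKcat(\CC,A)=\KK_0(\CC,A)$; since $\KK_0(\CC,A)$ is canonically and naturally the topological $\K$-theory group $\K_0(A)$ for every $A\in\Csep$, composition yields the claim.

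For the second isomorphism the key observation is that $\csus=\C_0((0,1))$ is itself a separable $C^*$-algebra, so Theorem \ref{KK} applies with first variable $\csus$ and gives $\h\ikk(\k(\csus),\k(A))\cong\KKcat(\csus,A)=\KK_0(\csus,A)$. Writing $\csus\simeq\csus\prot\CC$ and using Bott periodicity in $\KK$ one has $\KK_0(\csus,A)\cong\KK_1(\CC,A)\cong\K_1(A)$, naturally in $A$. The one point deserving a remark is that the object $\k(\csus)$ in the lemma --- the image of the noncommutative space $j(\csus)$ under $\k:\iNS^\op\map\ikk$ --- agrees with $\Sigma_{\ikk}^{-1}\k(\CC)=\Omega_{\ikk}\k(\CC)$ as used in the preceding proposition; this identification was already made there from $\csus\prot\CC\simeq\Sigma_{\iNS}\CC$ together with limit-preservation of $\k$ (Theorem \ref{klim}), so the notation is unambiguous.

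Alternatively one can stay inside the $\infty$-categorical framework and bootstrap the $\K_1$ statement off the $\K_0$ statement: by the preceding proposition $\Sigma_{\ikk}^{-2}\cong\Id$ on $\h\ikk$, whence $\h\ikk(\k(\csus),\k(A))\cong\h\ikk(\Sigma_{\ikk}^{-1}\k(\CC),\k(A))\cong\h\ikk(\k(\CC),\Sigma_{\ikk}\k(A))\cong\h\ikk(\k(\CC),\Omega_{\ikk}\k(A))$, and since $\k$ preserves limits and $j$ is symmetric monoidal, $\Omega_{\ikk}\k(A)\simeq\k(\Sigma_{\iNS}j(A))\simeq\k(j(\csus\prot A))$, so the first part applied to the separable $C^*$-algebra $\csus\prot A$ gives $\K_0(\csus\prot A)\cong\K_1(A)$. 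The argument is essentially bookkeeping; the only mild care points are checking that the two meanings of $\k(\csus)$ coincide and invoking the correct classical identifications together with their naturality, and I foresee no genuine obstacle.
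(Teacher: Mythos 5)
Your proposal is correct and follows the same route as the paper, whose proof simply cites Theorem \ref{KK} together with the preceding Bott periodicity proposition; your reduction to the classical identifications $\KK_0(\CC,A)\cong\K_0(A)$ and $\KK_0(\csus,A)\cong\K_1(A)$, and your alternative bootstrapping of the $\K_1$ case from the $\K_0$ case via $\Sigma_{\ikk}^{-2}\cong\Id$, are exactly the intended content. The extra care you take in identifying $\k(\csus)$ with $\Sigma_{\ikk}^{-1}\k(\CC)$ is a useful clarification but not a departure from the paper's argument.
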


\begin{proof}
The assertion follows from Theorem \ref{KK} and the above Proposition.
\end{proof}

\noindent
We spell out the construction of the {\em bootstrap category} following subsection \ref{GenBoot}. Recall that there is a composite functor $$\iNS\overset{\Sigmat}{\map}\pNSp\overset{R}{\map}\pNSp[\cpt^{-1}],$$ whose opposite functor gives us $\k:\iNS^\op\map\ikk$. Let $\cC$ denote the stable $\infty$-subcategory of $\ikk^\op$ generated by $\k(\CC)$. It is the closure of $\k(\CC)$ under translations (in both directions) and cofibers. Now we set $\uct:=\Ind_\omega(\cC)^\op$, which is a stable $\infty$-subcategory of $\ikk$. Hence the homotopy category of $\Ind_\omega(\cC)$ is a localizing subcategory of $\h\ikk^\op$ compactly generated by $\k(\CC)$, since $\k(\CC)$ is compact in $\ikk^\op$. 

\begin{defn}
 We define $\uct$ (resp. $\h\uct$) to be the {\em bootstrap category} in $\ikk$ (resp. in $\h\ikk$) (see also Remark \ref{KhomUCT} and Definition \ref{BootstrapK} below).
\end{defn}

Let $\ZZ[u,u^{-1}]$ be a differential graded algebra with trivial differentials and $\deg(u)=2$ and let $D(\ZZ[u,u^{-1}])$ denote its unbounded derived category of differential graded modules. The derived category $D(\ZZ[u,u^{-1}])$ is also the homotopy category of a stable model category. Our bivariant $\K$-theory category possesses the {\em correct} formal properties from the viewpoint of homotopy theory. Thus we are able to use a result of Bousfield \cite{BouKLoc} and Franke \cite{Franke} (written up carefully in \cite{Patchkoria}; see also \cite{Roitzheim}) to arrive at an algebraic description of the triangulated category $(\h\uct)^\op$. 

\begin{thm} \label{algDesc}
 There is an additive equivalence of categories $(\h\uct)^\op\simeq D(\ZZ[u,u^{-1}])$.
\end{thm}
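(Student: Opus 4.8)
The plan is to identify $(\h\uct)^\op$ with the derived category of a differential graded algebra via the general machinery of Bousfield--Franke. The starting point is the observation that $\uct$ is, by construction, the bootstrap category generated by the single compact object $\k(\CC)$ inside $\ikk$; equivalently $\Ind_\omega(\cC)$ is the localizing subcategory of $\h\ikk^\op = \h\pNSp[\cpt^{-1}]$ compactly generated by $\k(\CC)$. By the previous Proposition the suspension on $\h\ikk$ (hence on $\h\ikk^\op$ up to taking opposites, which reverses the sign) satisfies $\Sigma^{-2}_{\ikk} \cong \Id$, so $(\h\uct)^\op$ is a triangulated category with a periodic suspension of period $2$, generated by one compact object whose graded endomorphism ring I need to compute.

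The first genuine step is to compute the graded endomorphism ring $\bigoplus_{n\in\ZZ} \h\ikk^\op(\k(\CC), \Sigma^n \k(\CC))$. Using $\Sigma^{-2}\cong\Id$ this collapses to the two groups $\h\ikk^\op(\k(\CC),\k(\CC))$ and $\h\ikk^\op(\k(\CC),\Sigma\k(\CC))$. By Theorem \ref{KK} these are $\KKcat(\CC,\CC)\cong\KK_0(\CC,\CC)\cong\ZZ$ and $\KK_1(\CC,\CC)\cong 0$ respectively (equivalently, apply the preceding Lemma with $A=\CC$: $\h\ikk(\k(\CC),\k(\CC))\cong\K_0(\CC)=\ZZ$ and $\h\ikk(\k(\csus),\k(\CC))\cong\K_1(\CC)=0$, noting that $\k(\csus)\simeq\Sigma^{-1}_{\ikk}\k(\CC)$). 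Hence the graded endomorphism ring of the generator is $\ZZ[u,u^{-1}]$ with $\deg u = 2$, concentrated in even degrees with no higher Toda-bracket obstructions because everything odd vanishes. This puts us exactly in the ``algebraic'' regime: the homotopy of the endomorphism ring spectrum is formal and has global dimension $\leq 1$ over the coefficients (since $\ZZ$ has global dimension $1$ and the grading is even and $2$-periodic).

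Next I would invoke the Bousfield--Franke theorem in the form written up by Patchkoria (and Roitzheim): if $\cT$ is an algebraic (or suitably nice topological/stable-model-category) triangulated category generated by a compact object whose graded endomorphism ring $R_*$ is $2$-periodic, concentrated in even degrees, and of graded global dimension $\leq 1$, then $\cT$ is equivalent as a triangulated (indeed here additive) category to $D(R_*)$. One must check the hypotheses: the relevant $\infty$-category $\ikk = \pNSp[\cpt^{-1}]^\op$ is a stable presentable $\infty$-category (Lemma \ref{kkcompgen} and Proposition \ref{locCompGen}), hence $(\h\uct)^\op$ is the homotopy category of a stable model category — this is the ``correct formal properties'' remark that distinguishes our setup from a bare triangulated category. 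The $2$-periodicity comes from the Proposition on $\Sigma^{-2}_{\ikk}\cong\Id$; the even concentration and global dimension $\leq1$ come from the endomorphism-ring computation above. Applying the theorem yields $(\h\uct)^\op \simeq D(\ZZ[u,u^{-1}])$, which is the claimed additive equivalence.

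The main obstacle, and the step requiring the most care, is verifying that $\uct$ (or rather $(\h\uct)^\op$) genuinely satisfies the structural hypotheses of the Bousfield--Franke--Patchkoria realization theorem — in particular that the periodicity isomorphism $\Sigma^{-2}\cong\Id$ is compatible with a model-categorical (or $\infty$-categorical) periodic structure, and that the endomorphism ring spectrum of the generator is stably equivalent to the Eilenberg--MacLane-type object with homotopy $\ZZ[u,u^{-1}]$, so that the associated moduli problem of realizations is discrete. One must be careful that the statement is only an \emph{additive} (not necessarily triangulated) equivalence, since Franke-type results are known to produce exotic equivalences at the triangulated level; the excerpt's phrasing ``additive equivalence'' already reflects this. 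I would organize the proof so that the honest content is the endomorphism computation plus the observation that $\ikk$ comes from a stable model category, after which the realization theorem is quoted as a black box.
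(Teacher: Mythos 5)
Your proposal is correct and follows essentially the same route as the paper: compute the graded endomorphism ring of the compact generator $\k(\CC)$ via Theorem \ref{KK} and Bott periodicity to obtain $\ZZ[u,u^{-1}]$ with $\deg(u)=2$, note that this ring is concentrated in even degrees and has graded global dimension $1$, and then quote the Franke-type realization result as written up by Patchkoria (Proposition 5.2.3 there). Your caveat that the conclusion is only an additive equivalence matches the paper's Remark \ref{nonAlgebraic} exactly.
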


\begin{proof}
 Since $(\uct)^\op:=\Ind_\omega(\cC)$ is a presentable stable $\infty$-category the triangulated category $(\h\uct)^\op$ admits infinite coproducts. It follows from Theorem \ref{KK} that the graded Hom object $\Hom_{(\h\uct)^\op}(\Sigma^* \k(\CC),\k(\CC))\cong \Hom_{(\h\ikk)^\op}(\Sigma^* \k(\CC),\k(\CC))$ is isomorphic to $\ZZ[u,u^{-1}]$ with $\deg(u) = 2$. Since the graded global dimension of $\ZZ[u,u^{-1}]$ is $1$ and it is concentrated in even dimensions, one deduces the assertion from Proposition 5.2.3 of \cite{Patchkoria}.
\end{proof}

\begin{rem} \label{nonAlgebraic}
 It is not clear whether the above additive equivalence of categories is actually an exact equivalence of triangulated categories (see Remark 5. 2.4 of \cite{Patchkoria}). Hence we are not able to conclude that $(\h\uct)^\op$ is {\em algebraic} as a triangulated category according to the definition in \cite{SchAlgTri}.
\end{rem}

\noindent
Now we are going to justify the notation $\uct$ with the help of two simple propositions.

\begin{prop} \label{homuct}
Let $\cT$ denote the triangulated category $(\h\uct)^\op$. Then for any $X,Y\in\cT$ there is a natural short exact sequence $$0\map \Ext^1_{\K_*(\k(\CC))}(\K_{*}(\Sigma X),\K_*(Y))\map \cT(X,Y)\map \Hom_{\K_*(\k(\CC))}(\K_*(X),\K_*(Y))\map 0.$$ In particular, $X$ is isomorphic to $Y$ in $\cT$ if and only if $\K_*(X)$ and $\K_*(Y)$ are isomorphic as graded $\K_*(\k(\CC))\simeq\ZZ[u,u^{-1}]$-modules.
\end{prop}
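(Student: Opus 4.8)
The plan is to transport everything across the additive equivalence $\Phi\colon\cT=(\h\uct)^{\op}\xrightarrow{\ \sim\ }D(\ZZ[u,u^{-1}])$ of Theorem \ref{algDesc}, writing $R:=\ZZ[u,u^{-1}]$ for the graded ring with trivial differential and $\deg(u)=2$. The first point to record is that this equivalence is the realization functor attached to the compact generator $\k(\CC)\in\cT$ with $\End^{*}_{\cT}(\k(\CC))\cong R$, so it intertwines the functor $\K_{*}=\Hom^{*}_{\cT}(\k(\CC),-)$ on $\cT$ with the homology functor $H_{*}\colon D(R)\map\GrMod_{R}$ and carries $\K_{*}(\k(\CC))$ to $R$ as a graded ring (this is consistent with the Lemma above computing $\h\ikk(\k(\CC),\k(A))$ and $\h\ikk(\k(\csus),\k(A))$). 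It therefore suffices to establish the universal coefficient sequence and the isomorphism criterion inside $D(R)$ with $H_{*}$ in place of $\K_{*}$.

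Next I would use that $R$ is hereditary in the graded sense: a graded $R$-module amounts to a pair of abelian groups (its degree-$0$ and degree-$1$ parts), so $\GrMod_{R}\simeq\Ab\times\Ab$ has global dimension $1$ and $\Ext^{i}_{R}=0$ for $i\geq2$. Consequently every object $M\in D(R)$ decomposes, non-canonically, as $\bigoplus_{n}H_{n}(M)[n]$, and for $M,N\in D(R)$ the hyper-Ext computation collapses, by the projective dimension $\leq1$ of each homology module over the hereditary ground ring $\ZZ$, to the natural short exact sequence with outer terms $\Ext^{1}_{R}(H_{*}(\Sigma M),H_{*}(N))$ and $\Hom_{R}(H_{*}(M),H_{*}(N))$. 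Transporting back along $\Phi$ and using $H_{*}(\Phi X)=\K_{*}(X)$ gives exactly the asserted sequence for $X,Y\in\cT$, with naturality inherited from that of the hyper-Ext sequence.

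For the final clause, functoriality of $\K_{*}$ gives one implication. Conversely, given an isomorphism $\varphi\colon\K_{*}(X)\to\K_{*}(Y)$ of graded $R$-modules, surjectivity of the map in the short exact sequence lifts $\varphi$ to a morphism $f\colon X\to Y$ in $\cT$ with $\K_{*}(f)=\varphi$. The long exact $\K_{*}$-sequence of the triangle $X\xrightarrow{f}Y\to\textup{cone}(f)\to\Sigma X$ then forces $\K_{*}(\textup{cone}(f))=0$; since $\k(\CC)$ is a compact generator of $(\uct)^{\op}$, the functor $\K_{*}=\Hom^{*}_{\cT}(\k(\CC),-)$ detects the zero object, so $\textup{cone}(f)\simeq0$ and $f$ is an isomorphism.

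The step I expect to require the most care is the identification, in the first paragraph, of the equivalence of Theorem \ref{algDesc} with the realization functor of $\k(\CC)$, i.e.\ that under $\Phi$ the invariant $\K_{*}$ really becomes homology. If the construction in \cite{Patchkoria} does not make this transparent, one can bypass the issue and argue directly in $\cT$: since $\cT$ is compactly generated by $\k(\CC)$ with graded endomorphism ring $R$ of graded global dimension $1$, every $X\in\cT$ admits a \emph{cellular} resolution, i.e.\ a triangle $P_{1}\to P_{0}\to X\to\Sigma P_{1}$ with $P_{0},P_{1}$ coproducts of suspensions of $\k(\CC)$ chosen so that $\K_{*}(P_{1})\to\K_{*}(P_{0})$ is a free resolution of $\K_{*}(X)$; applying $\Hom_{\cT}(-,Y)$ and comparing with $\Hom_{R}(\K_{*}(-),\K_{*}(Y))$ then yields the short exact sequence by the classical homological-algebra argument of Rosenberg--Schochet \cite{RosSch}, and the isomorphism criterion follows as above.
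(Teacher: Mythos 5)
Your argument is correct, but the way you order it inverts the paper's logic. The paper's entire proof is a citation of Proposition 5.1.1 of \cite{Patchkoria}: the statement that a compactly generated triangulated category with a single compact generator whose graded endomorphism ring has graded global dimension $\leq 1$ satisfies a universal coefficient sequence for the corepresented homological functor. That is precisely your fallback argument via cellular resolutions (triangles $P_1\map P_0\map X$ with $P_i$ coproducts of shifts of $\k(\CC)$ realizing a free resolution of $\K_*(X)$), together with your observation that $\GrMod_{\ZZ[u,u^{-1}]}\simeq\Ab\times\Ab$ has global dimension $1$; the ``in particular'' clause then follows exactly as you say, since $\k(\CC)$ generates and so $\K_*$ detects zero objects. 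Your primary route --- transporting the sequence across the equivalence of Theorem \ref{algDesc} --- is shakier and, in the source, backwards: Remark \ref{nonAlgebraic} stresses that this equivalence is only known to be \emph{additive}, so one must separately verify that it commutes with $\Sigma$ and intertwines $\K_*$ with $H_*$ before anything can be transported, and in \cite{Patchkoria} the equivalence of 5.2.3 is itself built on the UCT machinery of 5.1.1 rather than conversely. You correctly flagged this as the delicate point and supplied the direct argument, which is the one that actually carries the proof; I would present that argument first and treat the passage through $D(\ZZ[u,u^{-1}])$ as a consistency check rather than as the proof.
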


\begin{proof}
The assertion is a consequence of Proposition 5.1.1 of \cite{Patchkoria}.
\end{proof}

\begin{rem} \label{KhomUCT}
 The above result is a universal coefficient theorem in $\h\ikk$ via $\K$-homology.
\end{rem}

\begin{prop}
 Let $A$ be a nuclear separable $C^*$-algebra satisfying UCT with finitely generated $\K$-theory. Then $\k(A)$ belongs to $\h\uct$. 
\end{prop}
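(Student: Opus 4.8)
The plan is to show that $\k(A)$ lies in the bootstrap category $\h\uct$ by building it up from copies of $\k(\CC)$ using only translations, cofibers, and countable coproducts, which is exactly the closure condition defining $\Ind_\omega(\cC)^\op$. The key input is Theorem~\ref{KK}, which identifies $\h\ikk(\k(A),\k(B))$ with $\KKcat(A,B)$ naturally in $A,B\in\Csep$, so that the functor $\k$ embeds $\KKcat$ fully faithfully into $\h\ikk$ and carries the Rosenberg--Schochet UCT subcategory of $\KKcat$ (the replete triangulated subcategory generated by $\CC$ and closed under countable coproducts) into the subcategory of $\h\ikk$ generated by $\k(\CC)$ under the same operations. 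Since a nuclear separable $C^*$-algebra satisfying UCT belongs to that subcategory of $\KKcat$ by \cite{RosSch}, its image $\k(A)$ lands in the analogous subcategory of $\h\ikk$; the only thing left is to check this lands inside $\uct$ as we have defined it, i.e. inside $\Ind_\omega(\cC)^\op$ rather than merely inside the ``countable-coproduct closure'' of $\k(\CC)$.

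First I would make precise the comparison of closures. In $\h\ikk^\op = \h\pNSp[\cpt^{-1}]$ the object $\k(\CC)$ is compact (Lemma~\ref{kkcompgen}), so $\Ind_\omega(\cC)$ is by definition the localizing subcategory of $\h\ikk^\op$ generated by $\k(\CC)$, hence is closed under \emph{all} small coproducts, in particular countable ones. Dually, $\uct = \Ind_\omega(\cC)^\op$ is closed in $\h\ikk$ under translations, cofibers, and small products; but since $\k(\CC)$ is compact on the $\ikk^\op$ side, one also sees using the functorial triangle and the argument of Proposition~\ref{limits} that the relevant triangulated subcategory of $\h\ikk$ on the ``$A$'' side is the one closed under countable coproducts and generated by $\k(\CC)$ --- precisely the shape of the Rosenberg--Schochet description. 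So the statement reduces to: the full subcategory of $\KKcat$ consisting of algebras $A$ with $\k(A)\in\h\uct$ is a replete triangulated subcategory closed under countable coproducts and containing $\CC$. Repleteness and triangulatedness are immediate because $\k$ is exact and $\h\uct$ is a full replete triangulated subcategory; containment of $\CC$ is the definition of $\cC$.

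The substantive step is closure under countable coproducts, and this is where the hypotheses ``nuclear'' and ``finitely generated $\K$-theory'' enter. In $\KKcat$ the infinite direct sum $\bigoplus_{n} A_n$ of separable $C^*$-algebras represents the countable coproduct (Theorem~1.12 of \cite{RosSch}, quoted in the excerpt before Proposition~\ref{limits}). I would check that $\k(\bigoplus_n A_n)$ agrees with $\coprod_n \k(A_n)$ computed in $\h\ikk$: both are separable, $\k$ restricted to $\Csep$ is the composite $\iCsep \to \iNS^\op \to \ikk$, and the infinite sum is a filtered colimit of finite sums in $\Csep$, hence maps to a filtered colimit in $\iNS^\op$ and then (since $\k$ preserves limits, Theorem~\ref{klim}, so its $\iNS^\op$-to-$\ikk$ incarnation preserves the relevant colimits) to the corresponding colimit in $\ikk$; comparing with the finite-coproduct case and the countable-coproduct property in $\h\ikk$ (Proposition~\ref{limits}) identifies it with $\coprod_n \k(A_n)$. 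Given this, if each $A_n$ lies in the UCT class then $\k(A_n)\in\h\uct$, and $\h\uct$ is closed under countable coproducts, so $\k(\bigoplus_n A_n)\in\h\uct$.

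I expect the main obstacle to be exactly the coproduct-comparison point: verifying that the abstract countable coproduct in the triangulated category $\h\ikk$, the ind-completion coproduct in $\Ind_\omega(\cC)$, and the $C^*$-algebraic infinite-sum construction all coincide under $\k$, and that this coincidence is compatible with the Rosenberg--Schochet identification inside $\KKcat$. Once that bookkeeping is done, the generation argument is formal: the class of $A\in\Csep$ with $\k(A)\in\h\uct$ is a replete triangulated subcategory of $\KKcat$ closed under countable coproducts and containing $\CC$, hence contains the whole UCT class, and in particular contains every nuclear separable $C^*$-algebra satisfying UCT with finitely generated $\K$-theory. (The finite generation of $\K$-theory is not needed for membership in $\h\uct$ per se, but it is what makes the resulting object particularly tractable --- e.g. a finite $\k(\CC)$-cell object --- and I would remark on this after the proof.)
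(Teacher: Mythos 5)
There is a genuine gap, and it sits exactly where you predicted the ``main obstacle'' would be --- but it is worse than a bookkeeping issue. Your reduction requires $\h\uct$ to be closed under countable coproducts in $\h\ikk$. By construction, however, $\h\uct^\op=\h\Ind_\omega(\cC)$ is the \emph{localizing} subcategory of $\h\ikk^\op$ generated by the compact object $\k(\CC)$; dualizing, $\h\uct$ is closed under translations, cofibers and small \emph{products} in $\h\ikk$. Closure under coproducts in $\h\ikk$ would mean closure under products in $\h\ikk^\op$, and a localizing subcategory generated by a compact object is in general \emph{not} closed under products of the ambient category (e.g.\ the localizing subcategory of $D(\ZZ)$ generated by $\ZZ/p$ does not contain $\prod_n\ZZ/p^n$). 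Your appeal to ``the functorial triangle and the argument of Proposition \ref{limits}'' does not supply this. A second, separate problem is your identification $\k(\oplus_n A_n)\simeq\coprod_n\k(A_n)$: the infinite sum is a colimit in the $1$-category of $C^*$-algebras, and the Ind--Yoneda embedding into $\iNS$ does not preserve such colimits (filtered colimits in $\iNS$ are freely adjoined formal ones), while Theorem \ref{klim} concerns limits of $\iNS^\op$, which is not the relevant universal property. This particular point is repairable --- form the comparison map $\coprod_n\k(A_n)\to\k(\oplus_n A_n)$, test its cone against the compact generators $\k(B)$, $B\in\Csep$, using Theorem \ref{KK} and Theorem 1.12 of \cite{RosSch} --- but you would still be left with the closure problem above. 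Note also that your argument never uses nuclearity, and you explicitly discard finite generation of $\K$-theory; this is a sign you are trying to prove a stronger statement than the one asserted, whose truth is precisely what the hypotheses are calibrated to avoid having to decide.

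The paper's proof sidesteps all of this. Nuclearity, UCT and finitely generated $\K$-theory together imply, by Corollary 7.5 of \cite{RosSch}, that $A$ is $\KK$-equivalent to $\C(X,x)$ for a \emph{finite} pointed CW complex $(X,x)$. A finite CW complex is assembled from finitely many cells, so $\k(\C(X,x))$ lies already in the small stable subcategory $\cC=\langle\k(\CC)\rangle$ --- only translations and finitely many cofiber sequences are needed, no infinite (co)products at all --- and Theorem \ref{KK} transports the $\KK$-equivalence to an isomorphism $\k(A)\cong\k(\C(X,x))$ in $\h\ikk$. If you want to salvage your strategy, the honest route is to first settle whether $\h\uct$ is closed under countable coproducts in $\h\ikk$ (equivalently, whether the localizing subcategory of $\h\ikk^\op$ generated by $\k(\CC)$ is closed under countable products); absent that, restrict to the finitely generated case and argue as the paper does.
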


\begin{proof}
Under the assumptions $A$ is $\KK$-equivalent to $\C(X,x)$, where $(X,x)$ is a finite pointed CW complex (see Corollary 7.5 of \cite{RosSch}). It is clear that $\k(\C(X,x))$ belongs to $\h\uct$. The assertion now follows from Theorem \ref{KK}.
\end{proof}

\subsection{The $\K$-theoretic bootstrap category} \label{Kthuct}
It follows from Theorem \ref{KK} that there is a canonical fully faithful functor $\k:\KKcat\hookrightarrow\h\ikk$. Let $\cB_{C^*}$ denote the triangulated subcategory of $\KKcat$ consisting of those separable $C^*$-algebras that satisfy UCT. Let $\K_*(-)$ denote the $\ZZ/2$-graded $\K$-theory functor on $\KKcat$. One interpretation of $\cB_{C^*}$ is that it is the the Verdier quotient $\KKcat/\ker(\K_*)$ (see Theorem 13.11 of \cite{CunMeyRos}).

\noindent
We know from Proposition \ref{limits} that the triangulated category $\h\ikk$ admits arbitrary coproducts. We denote the coproduct in $\h\ikk$ by $\coprod$. Let $\Ab^{\ZZ/2}$ denote the category of $\ZZ/2$-graded abelian groups. The object $\k(\CC)\coprod\k(\csus)$ corepresents a functor $\h\ikk\functor\Ab^{\ZZ/2}$ that generalizes the functor $\K_*:\KKcat\functor\Ab^{\ZZ/2}$, i.e., for any separable $C^*$-algebra $A$ one has \beqn
\h\ikk(\k(\CC)\coprod\k(\csus),\k(A))&\cong& \h\ikk(\k(\CC),\k(A))\oplus \h\ikk(\k(\csus),\k(A))\\ &\cong& \K_0(A)\oplus\K_1(A) = \K_*(A).\eeqn

\noindent
Let us denote the corepresented functor $\h\ikk(\k(\CC)\coprod\k(\csus),-):\h\ikk\functor\Ab^{\ZZ/2}$ by $\bK$. We have the following commutative diagram: \beqn
\xymatrix{ 
\KKcat \ar[r]^{\k}\ar[d]_{\K_*} & \h\ikk\ar[d]^{\bK} \\
\Ab^{\ZZ/2} \ar[r] & \Ab^{\ZZ/2}.
}\eeqn Let $\cN$ denote the triangulated subcategory of $\h\ikk$ spanned by the objects in the image of $\k(\ker(\K_*))$. Since $\cN^\op$ is contained in the compact objects of $(\h\ikk)^\op$, one concludes that the localizing subcategory $\langle\langle{\cN}^\op\rangle\rangle$ of $(\h\ikk)^\op$ generated by $\cN^\op$ is compactly generated. It follows that there is a coproduct preserving (Bousfield) localization of triangulated categories $(\h\ikk)^\op\functor (\h\ikk)^\op/\langle\langle{\cN}^\op\rangle\rangle$ and hence the product preserving (Bousfield) colocalization $\h\ikk\functor \cB$, where $\cB$ is the opposite of the triangulated category $(\h\ikk)^\op/\langle\langle{\cN}^\op\rangle\rangle$. 

\begin{defn} \label{BootstrapK}
 We define the triangulated category $\cB$ to be the {\em $\K$-theoretic bootstrap category}. A justification for this nomenclature will be provided below (see Theorem \ref{Kbstrap}).
\end{defn}

\begin{rem}
 It follows from Theorem 7.2.1 (2) of \cite{KraLoc} that the triangulated category $\cB^\op$ is compactly generated.
\end{rem}

\begin{prop} \label{bstrap}
 There is a fully faithful exact functor $\cB_{C^*}\functor\cB$.
\end{prop}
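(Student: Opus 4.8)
The plan is to produce the functor $\cB_{C^*}\functor\cB$ by factoring the Verdier quotient $\KKcat\functor\KKcat/\ker(\K_*)\simeq\cB_{C^*}$ through the Bousfield colocalization $\h\ikk\functor\cB$, and then to check full faithfulness by comparing morphism groups on both sides. First I would recall from Theorem \ref{KK} that $\k:\KKcat\hookrightarrow\h\ikk$ is fully faithful and exact, and that by construction $\cN$ is the triangulated subcategory of $\h\ikk$ spanned by the objects $\k(\ker(\K_*))$. So $\k$ carries $\ker(\K_*)$ into $\cN$, and dually it carries $(\ker(\K_*))^\op$ into $\cN^\op$ inside $(\h\ikk)^\op$. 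Composing $\k^\op:\KKcat^\op\to(\h\ikk)^\op$ with the localization $(\h\ikk)^\op\functor(\h\ikk)^\op/\langle\langle\cN^\op\rangle\rangle$ kills $(\ker(\K_*))^\op$, hence by the universal property of the Verdier quotient it factors through $\KKcat^\op/(\ker(\K_*))^\op\simeq\cB_{C^*}^\op$. Taking opposites gives the desired exact functor $\cB_{C^*}\functor\cB$. (One should note here that $\KKcat/\ker(\K_*)\simeq\cB_{C^*}$ is exactly the description recalled just above Definition \ref{BootstrapK}, citing Theorem 13.11 of \cite{CunMeyRos}, so the factorization target is literally $\cB_{C^*}$.)

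Next I would verify full faithfulness. The key point is that the UCT holds in both categories. On the $\KKcat$ side, for $A,B\in\cB_{C^*}$ the morphism group $\cB_{C^*}(A,B)$ is computed, by definition of the UCT class \cite{RosSch}, by the short exact sequence involving $\Ext^1$ and $\Hom$ of the $\ZZ/2$-graded $\K$-theory groups over $\ZZ$. On the $\cB$ side, one observes that $\cB^\op=(\h\ikk)^\op/\langle\langle\cN^\op\rangle\rangle$ is, by the localization $(\h\ikk)^\op\to\cB^\op$, a Bousfield localization where the image of $\k(\CC)$ (and $\k(\csus)$) generates the local objects; the corepresented functor $\bK=\h\ikk(\k(\CC)\coprod\k(\csus),-)$ descends to an exact functor on $\cB$ detecting isomorphisms, and the graded endomorphism ring of $\k(\CC)$ in $\cB^\op$ is again $\ZZ[u,u^{-1}]$ with $\deg(u)=2$ by Theorem \ref{KK}. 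Because this graded ring has global dimension $1$ and is concentrated in even degrees, the same Bousfield--Franke machinery used in Theorem \ref{algDesc} and Proposition \ref{homuct} (i.e. Proposition 5.1.1 of \cite{Patchkoria}) yields a universal coefficient short exact sequence computing $\cB(X,Y)$ for $X,Y$ in the subcategory generated by $\k(\CC)$ in terms of $\Ext^1$ and $\Hom$ of the $\bK$-groups. Since the functor $\cB_{C^*}\functor\cB$ is compatible with the two $\K$-theory functors (the commutative square with $\K_*$ and $\bK$ displayed before Definition \ref{BootstrapK}), it induces an isomorphism on the $\Hom$ and $\Ext^1$ terms of the two UCT sequences, and the five lemma gives that $\cB_{C^*}(A,B)\to\cB(\k(A),\k(B))$ is an isomorphism. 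Full faithfulness on the subcategory generated by $\CC$ follows, and since every object of $\cB_{C^*}$ lies in the triangulated (in fact thick) subcategory of $\KKcat$ generated by $\CC$, a d\'evissage argument extends full faithfulness to all of $\cB_{C^*}$.

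The main obstacle I anticipate is making the second step genuinely rigorous rather than hand-wavy: one must be careful that the functor $\bK$ on $\cB$ really is the ``correct'' $\K$-theory functor, i.e. that the localization $(\h\ikk)^\op\to\cB^\op$ does not collapse or distort the morphism groups between objects built from $\k(\CC)$, and that the Bousfield localization of $\cB_{C^*}$-type data matches the Bousfield localization of the $\ikk$-type data. Concretely, one needs that the natural map $\h\ikk(X,Y)\to\cB(X,Y)$ is an isomorphism when $X,Y$ lie in the image of $\k$ restricted to the bootstrap subcategory generated by $\CC$ and $Y$ is $\cN$-local in the appropriate sense; this is where the compact generation of $\langle\langle\cN^\op\rangle\rangle$ (already noted in the excerpt, via Theorem 7.2.1(2) of \cite{KraLoc}) and the explicit identification of the $\K$-theoretic localizing ideal are essential. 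An alternative, cleaner route would bypass the explicit UCT computation entirely: once the functor $\cB_{C^*}\functor\cB$ is constructed, one checks it is fully faithful on the generator $\CC$ (an $\Ext$-group computation over $\ZZ[u,u^{-1}]$, identical on both sides by Theorem \ref{KK}), and then invokes that both categories are generated under triangles and coproducts by $\CC$, with the functor preserving these; a standard localizing-subcategory argument (the image of the fully faithful part is a localizing subcategory containing a generator, hence everything) then finishes the proof. I would present the argument in this second, more structural form to keep the proof short.
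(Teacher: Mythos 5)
Your construction of the functor is the right one and matches what the paper intends (factor $\KKcat^\op\to(\h\ikk)^\op\to(\h\ikk)^\op/\langle\langle\cN^\op\rangle\rangle$ through the Verdier quotient by $\ker(\K_*)^\op$ using the universal property). The problem is full faithfulness, and both of your proposed routes assume the very point that needs proof. Theorem \ref{KK} computes morphism groups in $\h\ikk$; it says nothing about morphism groups in the quotient $\cB^\op=(\h\ikk)^\op/\langle\langle\cN^\op\rangle\rangle$. Your "cleaner" route asserts that the graded endomorphism ring of $\k(\CC)$ in $\cB^\op$ is still $\ZZ[u,u^{-1}]$ "by Theorem \ref{KK}", and your UCT route likewise needs $\h\ikk(X,Y)\to\cB(X,Y)$ to be an isomorphism for the relevant $X,Y$; in a Verdier quotient morphisms are computed by a calculus of fractions and can a priori be both created and destroyed. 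The fact that they are not, for objects coming from the compacts of $(\h\ikk)^\op$ when one quotients by the localizing subcategory generated by a set of compacts ($\cN^\op$), is precisely the content of Neeman's generalization of the Thomason localization theorem, which is the "standard argument along the lines of Theorem 2.26 of \cite{MyNSH}" that the paper's one-line proof invokes (compare the proof of Proposition \ref{Sho}). You gesture at the compact generation of $\langle\langle\cN^\op\rangle\rangle$ as "essential" but never actually deploy the theorem that turns it into a comparison of Hom-groups; without it the key step is missing in both versions.

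Two secondary errors. First, $\cB_{C^*}$ is \emph{not} the thick subcategory of $\KKcat$ generated by $\CC$; it is the subcategory closed under countable coproducts (the replete triangulated subcategory generated by $\CC$ and closed under countable coproducts, as recalled in Section \ref{bkk}). So your final d\'evissage must pass through countable coproducts, which in turn requires knowing that the composite $\cB_{C^*}\to\cB$ sends the $\KK$-theoretic coproduct $\oplus_n A_n$ to a coproduct in $\cB$ and that $\k(\CC)$ remains compact in $\cB^\op$ so that Hom out of it commutes with those coproducts in the first variable --- neither of which you address. Second, the closing slogan "the image of the fully faithful part is a localizing subcategory containing a generator, hence everything" is an essential-surjectivity-type argument and does not yield full faithfulness; the correct two-variable d\'evissage needs exactness and coproduct-preservation in each variable separately, anchored by the Neeman--Thomason identification on generators. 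With the localization theorem put in as the first step, the rest of your structural route does go through, and it is then essentially the paper's argument.
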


\begin{proof}
 The proof follows from standard arguments along the lines of Theorem 2.26 of \cite{MyNSH}.
\end{proof}

\begin{thm} \label{Kbstrap}
 Set $\pi_*(-) = \h\ikk(\Sigma^*\k(\CC),-)$ and let $X\in\cB$. Then for any $Y\in\h\ikk$ there is a natural short exact sequence $$0\map\Ext^1(\pi_{*+1}(X),\pi_*(Y))\map\h\ikk(X,Y)\map\Hom(\pi_*(X),\pi_*(Y))\map 0.$$
\end{thm}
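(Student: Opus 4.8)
The plan is to recognize this as a universal coefficient spectral sequence statement of the Adams/Bousfield type, now applied to the $\K$-theoretic bootstrap category $\cB$ in place of the bootstrap category $\uct$. First I would note that by construction $\cB^\op = (\h\ikk)^\op/\langle\langle\cN^\op\rangle\rangle$ is a Bousfield localization of a compactly generated triangulated category, and the homology theory $\pi_*(-) = \h\ikk(\Sigma^*\k(\CC),-)$ (equivalently, on the opposite side, corepresented by $\k(\CC)$) is exactly the homology theory whose acyclic objects generate the kernel. By Theorem~\ref{KK} (together with the two-periodicity Proposition above) the graded endomorphism ring $\pi_*(\k(\CC)) = \h\ikk(\Sigma^*\k(\CC),\k(\CC)) \cong \ZZ[u,u^{-1}]$ with $\deg u = 2$, which has graded global dimension $1$ and is concentrated in even degrees. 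This is precisely the numerical hypothesis under which the universal coefficient short exact sequence of the form
\[
0 \to \Ext^1(\pi_{*+1}(X),\pi_*(Y)) \to \h\ikk(X,Y) \to \Hom(\pi_*(X),\pi_*(Y)) \to 0
\]
holds, with $\Ext$ and $\Hom$ taken over the graded ring $\ZZ[u,u^{-1}]$.

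The key steps I would carry out, in order, are: (1) identify $\pi_*$ with the homology theory associated to the Bousfield localization defining $\cB$, so that an object $X \in \cB$ is, by definition of $\cB$ as the colocalization, $\pi_*$-local in the appropriate sense — concretely, $X$ lies in the essential image of the colocalization functor $\h\ikk \to \cB$, meaning $\h\ikk(N,X) = 0$ for all $N \in \cN$; (2) recall that $\h\ikk$ (or rather $(\h\ikk)^\op$) is compactly generated with $\k(\CC)$ (together with its suspension $\k(\csus)$) as a compact generator of the relevant subcategory, so that one has an Adams-type resolution of any $Y \in \h\ikk$ built from coproducts of (de)suspensions of $\k(\CC)$; (3) run the associated universal coefficient spectral sequence $\Ext^{s,t}_{\ZZ[u,u^{-1}]}(\pi_*(X),\pi_*(Y)) \Rightarrow \h\ikk(X,Y)$ for $X$ in the bootstrap (local) subcategory; (4) observe that since $\ZZ[u,u^{-1}]$ has graded global dimension $1$, the spectral sequence has only two columns $s=0,1$, so it degenerates to the desired short exact sequence, and the two-periodicity collapses the bigrading $(s,t)$ to the single degree shift seen in the statement. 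This is the same mechanism used in Proposition~\ref{homuct} and Theorem~\ref{algDesc} (via \cite{Patchkoria}), only now one side of the pairing ranges over all of $\h\ikk$ rather than the bootstrap category; the crucial input is that membership of $X$ in $\cB$ guarantees the vanishing that makes the $\Ext$-resolution computation go through.

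The main obstacle I expect is justifying the existence and convergence of the universal coefficient spectral sequence when the second variable $Y$ is allowed to be \emph{arbitrary} in $\h\ikk$ rather than in the bootstrap category — the standard references (e.g. the Bousfield--Franke machinery as in \cite{Patchkoria}, and the homological algebra of \cite{KraBR, KraLoc}) typically phrase things with both variables in a localizing subcategory generated by the chosen compact object. The resolution is that one only needs a projective resolution of the graded module $\pi_*(X)$ over $\ZZ[u,u^{-1}]$, which exists and has length $\le 1$ by the global dimension bound; applying $\h\ikk(-,Y)$ to a resolution of $X$ by finite coproducts of (de)suspensions of $\k(\CC)$ and using that $\h\ikk(\Sigma^*\k(\CC),Y) = \pi_{-*}(Y)$ together with the fact that coproducts in the first variable turn into products of $\pi_*(Y)$'s reduces the whole computation to graded homological algebra over $\ZZ[u,u^{-1}]$, with the length-one resolution forcing exactly the two-term exact sequence. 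A secondary point to be careful about is the precise direction of the degree shift and the placement of $\Sigma$ inside the $\Ext^1$ term (it should be $\pi_{*+1}(X)$, matching the $\Ext^1_{\K_*(\k(\CC))}(\K_{*+1}(A),\K_*(B))$ term in the classical Rosenberg--Schochet UCT and in Proposition~\ref{homuct}); this follows by tracking suspensions through the exact triangle realizing the projective resolution, exactly as in the proof of the classical UCT.
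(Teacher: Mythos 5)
Your proposal is correct and is essentially the paper's argument: the paper's proof is a one-line appeal to Theorem 13.11 (and Exercise 13.13) of \cite{CunMeyRos}, whose mechanism is exactly the geometric resolution of $X$ by coproducts of (de)suspensions of $\k(\CC)$, the identification $\pi_*(\k(\CC))\cong\ZZ[u,u^{-1}]$ of graded global dimension $1$, and the existence of the needed coproducts in $\h\ikk$ (Proposition \ref{limits}). Only note that the resolution is of $X\in\cB$, not of $Y$ (your step (2) has a slip later corrected), and that the coproducts involved must be allowed to be infinite, not finite, since $\pi_*(X)$ need not be finitely generated.
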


\begin{proof}
 Since the triangulated category $\h\ikk$ admits coproducts (see Proposition \ref{limits}) the argument in the proof of Theorem 13.11 (and Exercise 13.13) of \cite{CunMeyRos} goes through. 
\end{proof}

\begin{rem}
 Observe that $\pi_*(\k(X))=\K^*(X)$, where $\K^*(X)$ is the $\K$-theory of $X\in\iNS$ (see Definition \ref{Kdefn}). Thus the objects of $\cB$ satisfy a $\K$-theoretic universal coefficient theorem. The ad hoc notation $\pi_*(-)$ in Theorem \ref{Kbstrap} is deployed to make the short exact sequence resemble the usual UCT sequence in $\KK$-theory.
\end{rem}

\begin{rem}
Let $\ker(\bK)$ denote the full triangulated subcategory of $\h\ikk$ consisting of those objects $X$ with $\bK(X)\simeq 0$. The triangulated category $\h\ikk$ admits arbitrary products (see Corollary \ref{triprod}) and the subcategory $\ker(\bK)\subseteq\h\ikk$ is colocalizing. However, it is not clear whether $\ker(\bK)^\op$ is compactly generated (this is related to the smashing conjecture in $\h\pNSp[\cpt^{-1}]$ that is an interesting problem in its own right). Hence the predictable analogue of Proposition \ref{bstrap} may not hold in this case.
\end{rem}

\section{$\cZ$-colocalization of noncommutative spectra} \label{Zcoloc}
Here we use the basic terminology of rings and modules in the context of $\infty$-categories from Sections 3 and 4 of \cite{LurHigAlg}. Recall from subsection \ref{ColocNSp} that there is a smashing colocalization $R_\cZ:\pNSp\map\pNSp[\cZ^{-1}]$. We set $\izz=\pNSp[\cZ^{-1}]^\op$ so that there is a composite functor $$\iNS\overset{\Sigmat}{\map}\pNSp\overset{R_\cZ}{\map}\pNSp[\cZ^{-1}],$$ whose opposite functor is denoted by $\z:\iNS^\op\map\izz$. Recall from Proposition \ref{Sho} that there is a fully faithful functor $\Theta:\SHo\map\h\pNSp^\op$, which maps $A$ viewed as $(A,0)\in\SHo$ to $\Sigmat(A)$. From Remark \ref{CAlg} we deduce that $\z(\CC)\simeq\Theta(\cZ)$ is a commutative algebra object in $\pNSp^\op$. Observe that $\z(\CC)=\Sigmat(\cZ)$ as an object in $\pNSp^\op$.

\subsection{Bootstrap category in $\izz$} Let $A$ be a separable $C^*$-algebra, such that $\Sigmat(A)$ is a coidempotent object in $\pNSp$. Then we have seen that $\cC=\pNSp[A^{-1}]$ is a compactly generated and closed symmetric monoidal stable $\infty$-category, whose unit object $\one_\cC=\Sigmat(A)$ is compact. Applying the construction from subsection \ref{GenBoot} we obtain the bootstrap category in $\pNSp[A^{-1}]$. Let us now investigate the bootstrap category of $\izz$. 

\begin{defn}
Specializing to the case $A=\cZ$ produces a compactly generated stable $\infty$-subcategory $\Ind_\omega(\langle\Sigmat(\cZ)\rangle)$ of $\pNSp[\cZ^{-1}]$. The subcategory $\czb :=\Ind_\omega(\langle\Sigmat(\cZ)\rangle)^\op$ of $\izz=\pNSp[\cZ^{-1}]^\op$ is said to be the {\em bootstrap category} inside $\izz$.
\end{defn}

\begin{rem}
 By construction $\h\czb$ is the colocalizing subcategory of $\h\izz$ generated by $\Sigmat(\cZ)$, since $\h\Ind_\omega(\langle\Sigmat(\cZ)\rangle)$ is the localizing subcategory of $\h\pNSp[\cZ^{-1}]$ generated by $\Sigmat(\cZ)$, i.e., the closure in $\h\pNSp[\cZ^{-1}]$ under translations, cofibers, and arbitrary coproducts.
\end{rem}

The first step towards understanding $\izz$ is to investigate the bootstrap category $\czb$. We get a description of its opposite category as a {\em module category} using the classification of stable model categories by Schwede--Shipley \cite{SchShi}. More precisely, using the enhanced version in the symmetric monoidal setup (see Proposition 7.1.2.7 of \cite{LurHigAlg}) we get 

\begin{prop}
Let $\cC=(\czb)^\op$ so that $\one_\cC=\Sigmat(\cZ)$. Then there is an equivalence $\cC\simeq \Mod_R$, where $R = \End_\cC(\one_\cC)$ is an $\EE_\infty$-ring. 
\end{prop}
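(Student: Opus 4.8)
The plan is to invoke the Schwede--Shipley recognition principle for stable model categories in its $\infty$-categorical, symmetric monoidal enhancement, namely Proposition 7.1.2.7 of \cite{LurHigAlg}, which says: if $\cC$ is a presentable stable symmetric monoidal $\infty$-category whose unit object $\one_\cC$ is compact and generates $\cC$ (in the sense that the only localizing subcategory containing $\one_\cC$ is $\cC$ itself), then the functor $X \mapsto \Map_\cC(\one_\cC, X)$ — or rather its spectrum-valued enhancement — induces an equivalence $\cC \simeq \Mod_R$ where $R = \End_\cC(\one_\cC)$ is the endomorphism $\EE_\infty$-ring of the unit. So the entire proof reduces to verifying the three hypotheses for $\cC = (\czb)^\op = \Ind_\omega(\langle\Sigmat(\cZ)\rangle)$.

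First I would check presentability and stability: by construction $\cC = \Ind_\omega(\langle\Sigmat(\cZ)\rangle)$ is the ind-completion of a small stable $\infty$-category, hence it is presentable (Proposition 5.3.5.11 of \cite{LurToposBook}) and stable (since $\langle\Sigmat(\cZ)\rangle$ is a small stable $\infty$-category, its Ind-completion is stable). Second, the symmetric monoidal structure: here one uses that $\izz = \pNSp[\cZ^{-1}]^\op$ is closed symmetric monoidal with unit $\Sigmat(\cZ)$ (Proposition \ref{locCompGen} applied with $A = \cZ$, since $\Sigmat(\cZ)$ is coidempotent), and that $\langle\Sigmat(\cZ)\rangle$, being generated by the monoidal unit under shifts and cofibers, is closed under the tensor product — so the tensor product restricts to $\cC^\op$ and then to its Ind-completion, making $\cC$ symmetric monoidal with $\one_\cC = \Sigmat(\cZ)$. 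Third, compactness and generation of the unit: $\Sigmat(\cZ)$ is compact in $\pNSp[\cZ^{-1}]$ (it is the unit of a compactly generated closed symmetric monoidal category, by Proposition \ref{locCompGen} again — the relevant remark being that in these colocalizations the unit is $\omega$-compact), hence compact in $\izz^\op$ and thus in $\cC$; and by the very definition of $\langle\Sigmat(\cZ)\rangle$ as the smallest stable subcategory containing $\Sigmat(\cZ)$, the unit generates $\cC$ as a localizing subcategory. Once these are in place, Proposition 7.1.2.7 of \cite{LurHigAlg} delivers the equivalence $\cC \simeq \Mod_R$ with $R = \End_\cC(\one_\cC)$ an $\EE_\infty$-ring.

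I expect the only real subtlety — the step I would want to state carefully rather than wave at — is the interplay between passing to opposite categories and taking Ind-completions, together with keeping track of which object is "the unit" after all the dualizations. The bootstrap category $\czb$ was defined as $\Ind_\omega(\langle\Sigmat(\cZ)\rangle)^\op$ living inside $\izz = \pNSp[\cZ^{-1}]^\op$, so $\cC = (\czb)^\op = \Ind_\omega(\langle\Sigmat(\cZ)\rangle)$ is a subcategory of $\pNSp[\cZ^{-1}]$, not of $\izz$; the symmetric monoidal structure it inherits is the one on $\pNSp[\cZ^{-1}]$, whose unit is indeed $\Sigmat(\cZ)$. One must confirm that $\langle\Sigmat(\cZ)\rangle \subseteq \pNSp[\cZ^{-1}]^c$ is a monoidal subcategory (closed under $\otimes$), which follows because it is generated under finite colimits and shifts by the $\otimes$-unit, and $\otimes$ is exact in each variable; and that Ind-completion is compatible with the symmetric monoidal structure, which is Lurie's discussion of monoidal structures on Ind-completions (Corollary 4.8.1.13 and surrounding material in \cite{LurHigAlg}). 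After that bookkeeping the cited proposition applies verbatim; no genuinely hard estimate or new construction is needed.

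It is worth noting for the reader what is \emph{not} claimed: this identifies $\cC$ abstractly as a module category, but says nothing concrete about the homotopy type of $R = \End_\cC(\one_\cC)$ — indeed, as the introduction and Remark \ref{zalgebraization} emphasize, the graded endomorphism ring $\pi_* R = \h\pNSp[\cZ^{-1}]^\op(\Sigma^*\Sigmat(\cZ), \Sigmat(\cZ))$ is precisely the object whose structure is not understood, which is why no algebraic description of $\h\czb$ analogous to Theorem \ref{algDesc} is available. So the proof should be short and essentially formal, with all the content hidden in the unknown ring $R$.
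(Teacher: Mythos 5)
Your argument matches the paper's own proof: both verify that $\cC=\Ind_\omega(\langle\Sigmat(\cZ)\rangle)$ is generated by its compact tensor unit, use Corollary 4.8.1.13 of \cite{LurHigAlg} to extend the monoidal structure across the Ind-completion, and then invoke Proposition 7.1.2.7 of \cite{LurHigAlg}. The extra bookkeeping you include about opposite categories and the unidentified homotopy type of $R$ is consistent with the paper (cf. Remark \ref{zalgebraization}) but not part of its proof, which is exactly as short and formal as you predict.
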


\begin{proof}
By construction $\cC$ is generated by the tensor unit $\one_\cC$, which is a compact object. The tensor product preserves finite colimits separately in each variable in the stable $\infty$-category $\langle\one_\cC\rangle$. Hence it preserves small colimits separately in each variable in $\Ind_\omega(\langle\one_\cC\rangle)=\cC$ (see Corollary 4.8.1.13 of \cite{LurHigAlg}). The assertion now follows from Proposition 7.1.2.7 of \cite{LurHigAlg}.
\end{proof}

\begin{rem} \label{zalgebraization}
Our construction of the bootstrap category $\czb$ (or ${\czb}^\op$) enables us to give an algebraic recipe to compute its morphism groups. More precisely, let $\cC=(\czb)^\op$ and set $\bZ(X)=\cC(\one_\cC,X)$ (the mapping spectrum) for every $X\in\cC$. Note that $\bZ(X)$ is a module spectrum over $R=\End_\cC(\one_\cC)$. Then for any $X_1,X_2\in\cC$ there is an equivalence of spectra $\cC(X_1, X_2)\overset{\sim}{\map}\Mod_R (\bZ(X_1),\bZ(X_2))$ and there is a convergent spectral sequence (see, for instance, Corollary 4.15 of \cite{BluGepTab2}) $$\textup{E}^{p,q}_2 = \Ext^{p,q}_{\pi_{-*}(R)}(\pi_{-*}(\bZ(X_1)),\pi_{-*}(\bZ(X_2)))\Rightarrow \pi_{-p-q}\Mod_R(\bZ(X_1),\bZ(X_2)).$$
\end{rem}

We now introduce an equivalence relation on $C^*$-algebras. Let $A\in\Csep$, such that $\Sigmat(A)$ is a coidempotent object in $\pNSp$. As explained before one can consider the bootstrap category generated by a set of compact objects $V$ in $\pNSp[A^{-1}]$.

\begin{defn} \label{Ceq}
Let $B,C$ be two separable $C^*$-algebras. Then 
\begin{enumerate}
\item we say that $B,C$ are {\em $A$-equivalent} (denoted $B\sim_A C$) if $\{R_A\circ\Sigmat(B)\}$ and $\{R_A\circ\Sigmat(C)\}$ generate equivalent bootstrap categories in $\pNSp[A^{-1}]$ and

\item we say $B \leqslant_A C$ if the bootstrap category generated by $\{R_A\circ\Sigmat(B)\}$ is contained in the bootstrap category generated by $\{R_A\circ\Sigmat(C)\}$.
\end{enumerate}
\end{defn}

\begin{rem}
It is clear that $\sim_A$ is reflexive, symmetric, and transitive, i.e., an equivalence relation and $\leqslant_A$ is reflexive, anti-symmetric, and transitive, i.e., a partial order. For $A,B\in\Csep$ it is an interesting problem to determine whether $A\sim_\CC B$, i.e., whether $\{\Sigmat(A)\}$ and $\{\Sigmat(B)\}$ generate equivalent bootstrap categories in $\pNSp$.
\end{rem}

\begin{rem}
 It is conceivable that a finer equivalence relation than the one in Definition \ref{Ceq} is more interesting. One can define $B,C$ to be {\em thick $A$-equivalent} (denoted $B\sim^t_A C$) if $\{R_A\circ\Sigmat(B)\}$ and $\{R_A\circ\Sigmat(C)\}$ generate equivalent thick subcategories in $\h\pNSp[A^{-1}]$.
\end{rem}

For an $\EE_1$-ring (or an $\textup{A}_\infty$-ring spectrum) $R$ one denotes the stable $\infty$-category of right $R$-modules by $\RMod_R$ (see Chapter 7 of \cite{LurHigAlg}). Much like classical Morita theory in algebra, modulo technicalities, Morita theory in stable homotopy theory tries to ascertain when two $\EE_1$-rings have equivalent module categories. One key result in this direction is the classification of stable model categories (under some hypotheses) by Schwede--Shipley \cite{SchShi}.

\begin{prop}
If $A\sim_\CC B$, then $\RMod_{\End(\Sigmat(A))}\simeq\RMod_{\End(\Sigmat(A))}$.
\end{prop}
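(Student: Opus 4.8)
The plan is to identify each bootstrap category with a module $\infty$-category via the $\infty$-categorical Schwede--Shipley theorem, and then transport the equivalence supplied by the hypothesis. First I would observe, exactly as in subsection \ref{GenBoot}, that $\Ind_\omega(\langle\Sigmat(A)\rangle)$ is a presentable stable $\infty$-category, full in $\pNSp$, in which $\Sigmat(A)$ is a compact object; and that $\Sigmat(A)$ generates it, because $\langle\Sigmat(A)\rangle$ is by definition the smallest stable subcategory of $\pNSp$ containing $\Sigmat(A)$ and stable under translations and cofibers, so that $\Ind_\omega(\langle\Sigmat(A)\rangle)$ is the smallest localizing subcategory of $\pNSp$ containing $\Sigmat(A)$. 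Writing $\End(\Sigmat(A)) = \Map_{\pNSp}(\Sigmat(A),\Sigmat(A))$ for the associated endomorphism $\EE_1$-ring spectrum (composition product), the $\infty$-categorical classification of compactly generated stable $\infty$-categories with a compact generator (Schwede--Shipley \cite{SchShi}; see Section 7.1 of \cite{LurHigAlg}) then yields an equivalence $\Ind_\omega(\langle\Sigmat(A)\rangle)\simeq\RMod_{\End(\Sigmat(A))}$, and likewise $\Ind_\omega(\langle\Sigmat(B)\rangle)\simeq\RMod_{\End(\Sigmat(B))}$; the endomorphism ring computed in the full subcategory agrees with the one computed in $\pNSp$.

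Next I would feed in the hypothesis: by Definition \ref{Ceq}, $A\sim_\CC B$ means precisely that the bootstrap categories generated by $\{\Sigmat(A)\}$ and $\{\Sigmat(B)\}$ in $\pNSp$ are equivalent, that is, $\Ind_\omega(\langle\Sigmat(A)\rangle)\simeq\Ind_\omega(\langle\Sigmat(B)\rangle)$ as stable $\infty$-categories. Concatenating the three equivalences produces $\RMod_{\End(\Sigmat(A))}\simeq\RMod_{\End(\Sigmat(B))}$, which is the asserted statement (reading the right-hand side with $\Sigmat(B)$, evidently the intended assertion). One should emphasize that the middle equivalence need not carry the compact generator $\Sigmat(A)$ to $\Sigmat(B)$; accordingly $\End(\Sigmat(A))$ and $\End(\Sigmat(B))$ are in general only Morita equivalent rather than equivalent as $\EE_1$-rings, but this Morita-type conclusion (equality of the right-module $\infty$-categories) is exactly what is claimed.

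The argument is essentially formal, so I do not anticipate a genuine obstacle; the only points needing a little care are bookkeeping ones. First, one should read the equivalence of bootstrap categories at the $\infty$-categorical level — which is legitimate, since these bootstrap categories are full $\infty$-subcategories of $\pNSp$ — because the Schwede--Shipley identification requires an $\infty$-categorical, not merely triangulated, equivalence. Second, one must pin down conventions so that the relevant module category is one of right modules (rather than left modules); this is settled once one adopts the convention of \cite{LurHigAlg} under which a compactly generated stable $\infty$-category with compact generator $X$ is equivalent to $\RMod_{\End(X)}$. With these conventions fixed the proof is a one-line composition of equivalences.
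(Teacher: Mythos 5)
Your proof is correct and follows essentially the same route as the paper: both apply the Schwede--Shipley classification (Theorem 7.1.2.1 of \cite{LurHigAlg}) to identify each bootstrap category $\Ind_\omega(\langle\Sigmat(A)\rangle)$, $\Ind_\omega(\langle\Sigmat(B)\rangle)$ with the corresponding module $\infty$-category and then invoke Definition \ref{Ceq} to conclude. Your added remarks --- that the equivalence need not preserve the compact generator (so the conclusion is a Morita-type statement) and that the statement contains a typo (the right-hand side should read $\RMod_{\End(\Sigmat(B))}$) --- are both accurate.
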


\begin{proof}
 Observe first that $\Ind_\omega(\langle \Sigmat(A)\rangle)$ and $\Ind_\omega(\langle \Sigmat(B)\rangle)$ are both stable and presentable $\infty$-categories that are compactly (graded) generated by $\Sigmat(A)$ and $\Sigmat(B)$ respectively. Hence by the Schwede--Shipley classification there are equivalences $$\text{$\Ind_\omega(\langle \Sigmat(A)\rangle)\simeq \RMod_{\End(\Sigmat(A))}$ and $\Ind_\omega(\langle \Sigmat(B)\rangle)\simeq \RMod_{\End(\Sigmat(B))}$}$$ (see also Theorem 7.1.2.1 of \cite{LurHigAlg} for the version needed here). Now $A\sim_\CC B$ implies by definition that $\Ind_\omega(\langle \Sigmat(A)\rangle)\simeq\Ind_\omega(\langle \Sigmat(B)\rangle)$.
\end{proof}

\begin{rem}
Algebraic $\K$-theory and topological Hochschild homology are both Morita invariant functors. For an $\EE_1$-ring $R$ the algebraic $\K$-theory functor takes as input the stable $\infty$-category of compact $R$-modules. Such functors can be deployed to test whether $A\sim_\CC B$.
\end{rem}

Let $p,q$ be relatively prime infinite supernatural numbers and $\Z_{p,q}$ be a prime dimension drop $C^*$-algebra. Let $\cQ$ be the universal UHF algebra of infinite type.

\begin{thm} \label{ColocEx}
In $\cZ$-colocalized noncommutative spectra $\pNSp[\cZ^{-1}]$ the following hold:
 \begin{enumerate}
 \item $\CC\sim_\cZ \cZ$
  \item $\CC\leqslant_\cZ \Z_{p,q}$
  \item $\cpt\nsim_\cZ \cpt\prot\cQ$
 \end{enumerate}

\end{thm}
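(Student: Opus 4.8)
The plan is to treat the three items by unrelated elementary arguments. Throughout write $E_\cZ:=\Sigmat(\cZ)$, which is the tensor unit of $\pNSp[\cZ^{-1}]$, and recall that $R_\cZ(-)=(-)\otimes E_\cZ$, that $\Sigmat$ is symmetric monoidal (Theorem \ref{SMNSp}), and that $\Sigmat$ carries a separable $C^*$-algebra to a compact object of $\pNSp$; since the smashing colocalization $R_\cZ$ preserves compactness, each $R_\cZ\circ\Sigmat(A)\simeq\Sigmat(A\prot\cZ)$ is a compact object of $\pNSp[\cZ^{-1}]$, so the bootstrap construction of subsection \ref{GenBoot} applies to the relevant singletons. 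For \textbf{(1)}: as $\CC$ is the unit of $\iNS$, $\Sigmat(\CC)$ is the unit of $\pNSp$, so $R_\cZ\circ\Sigmat(\CC)\simeq E_\cZ$; and as $\cZ$ is strongly self-absorbing, $\cZ\prot\cZ\cong\cZ$, so $R_\cZ\circ\Sigmat(\cZ)\simeq\Sigmat(\cZ\prot\cZ)\simeq E_\cZ$ as well. Hence both singletons generate the bootstrap category $\Ind_\omega\langle E_\cZ\rangle=(\czb)^\op$, giving $\CC\sim_\cZ\cZ$.

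For \textbf{(2)} the aim is to exhibit $E_\cZ$ as a retract of $R_\cZ\circ\Sigmat(\Z_{p,q})\simeq\Sigmat(\Z_{p,q}\prot\cZ)$ in $\pNSp[\cZ^{-1}]$. Besides the unital inclusion $\CC\hookrightarrow\Z_{p,q}$, the hypothesis that $p,q$ are coprime infinite supernatural numbers provides a unital $*$-homomorphism $\psi\colon\Z_{p,q}\to\cZ$, since $\cZ$ is an inductive limit of prime dimension drop algebras with unital connecting maps (Jiang--Su \cite{JiangSu} and its refinements). Tensoring both $*$-homomorphisms with $\id_\cZ$ produces $*$-homomorphisms $\cZ\to\Z_{p,q}\prot\cZ$ and $\Z_{p,q}\prot\cZ\to\cZ$, and applying $\Sigmat$ --- noting that $\Sigmat(\cZ)=E_\cZ$ and $\Sigmat(\Z_{p,q}\prot\cZ)=R_\cZ\circ\Sigmat(\Z_{p,q})$ already lie in $\pNSp[\cZ^{-1}]$ --- yields morphisms $E_\cZ\xrightarrow{\,\phi\,}\Sigmat(\Z_{p,q}\prot\cZ)\xrightarrow{\,\rho\,}E_\cZ$. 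Their composite $\rho\circ\phi$ is $\Sigmat$ of the $C^*$-map $\cZ\cong\CC\prot\cZ\to\cZ\prot\cZ\xrightarrow{\,\sim\,}\cZ$, $z\mapsto 1\otimes z\mapsto z$; since $\cZ$ is strongly self-absorbing the flip on $\cZ\prot\cZ$ is homotopic to the identity and the first-factor embedding is homotopic to an isomorphism (\cite{TomWin}), so this $C^*$-map is $*$-homotopic to $\id_\cZ$ and hence $\rho\circ\phi\simeq\id_{E_\cZ}$. As $\Ind_\omega\langle\Sigmat(\Z_{p,q}\prot\cZ)\rangle$ is idempotent complete it therefore contains $E_\cZ$, hence contains $\Ind_\omega\langle E_\cZ\rangle$; this is exactly $\CC\leqslant_\cZ\Z_{p,q}$.

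For \textbf{(3)} set $G_1=R_\cZ\circ\Sigmat(\cpt)\simeq\Sigmat(\cpt\prot\cZ)$ and $G_2=R_\cZ\circ\Sigmat(\cpt\prot\cQ)\simeq\Sigmat(\cpt\prot\cQ)$, the last equivalence because $\cQ$ is strongly self-absorbing, hence $\cZ$-stable. I will show that the bootstrap categories $\Ind_\omega\langle G_1\rangle$ and $\Ind_\omega\langle G_2\rangle$ are not equivalent by a rationality obstruction. Since $\pNSp[\cZ^{-1}]$ is a full subcategory of $\pNSp$, combining Proposition \ref{Sho} with the identification $\SHo(A\prot\cpt,B\prot\cpt)\cong\KKcat(A,B)$ (Theorem 8.28 of \cite{CunMeyRos}) and Bott periodicity in $\KK$ gives $\pi_j\End_{\pNSp[\cZ^{-1}]}(G_1)\cong\KK_j(\cZ,\cZ)$ and $\pi_j\End_{\pNSp[\cZ^{-1}]}(G_2)\cong\KK_j(\cQ,\cQ)$. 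Both $\cZ$ and $\cQ$ lie in the UCT class, with $K$-theory $(\ZZ,0)$ and $(\QQ,0)$ respectively, so the classical UCT yields $\KK_*(\cZ,\cZ)\cong\ZZ[u^{\pm1}]$ and $\KK_*(\cQ,\cQ)\cong\QQ[u^{\pm1}]$ with $\deg u=2$ (here one uses that $\QQ$ is an injective $\ZZ$-module, so the $\Ext^1_\ZZ$-terms vanish). In particular every homotopy group of $\End_{\pNSp[\cZ^{-1}]}(G_2)$ is a $\QQ$-vector space, so a standard dévissage using compactness of $G_2$ (closure under shifts, cofibre sequences, and filtered colimits) shows that $\h\pNSp[\cZ^{-1}](X,Y)$ is a $\QQ$-vector space for all $X,Y\in\Ind_\omega\langle G_2\rangle$; that is, $\Ind_\omega\langle G_2\rangle$ is $\QQ$-linear. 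But $\h\pNSp[\cZ^{-1}](G_1,G_1)\cong\KK_0(\cZ,\cZ)\cong\ZZ$ is not a $\QQ$-vector space, so $\Ind_\omega\langle G_1\rangle$ is not $\QQ$-linear. Since any additive equivalence of categories preserves $\QQ$-linearity, the two bootstrap categories cannot be equivalent, whence $\cpt\nsim_\cZ\cpt\prot\cQ$.

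The one genuinely nontrivial ingredient is the classification-theoretic input used in (2): that $\Z_{p,q}$ admits a unital $*$-homomorphism into $\cZ$ for arbitrary coprime infinite supernatural numbers $p,q$, together with the homotopy-theoretic properties of strongly self-absorbing $C^*$-algebras of \cite{TomWin} used to identify the relevant composite of $*$-homomorphisms with $\id_\cZ$ up to homotopy. Parts (1) and (3) are essentially bookkeeping once the dictionary with $\SHo$ and Kasparov theory is in place; the only point of care common to all three is checking that $R_\cZ$ preserves compact objects, so that the chosen generators are admissible inputs to the bootstrap construction.
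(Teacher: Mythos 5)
Your argument is correct. Parts (1) and (2) follow essentially the paper's route. For (1) both arguments reduce to the observation that each generator becomes the tensor unit $\Sigmat(\cZ)$ of $\pNSp[\cZ^{-1}]$. For (2) the paper also exhibits $\Sigmat(\cZ\prot\cZ)$ as a retract of $\Sigmat(\Z_{p,q}\prot\cZ)$, but it does so using the pair of unital embeddings $\cZ\map\Z_{p,q}$ and $\Z_{p,q}\map\cZ$ from Proposition 3.5 of R{\o}rdam--Winter and then identifies the composite unital endomorphism of $\cZ$ with the identity by the contractibility of the space of unital endomorphisms of $\cZ$ (Dadarlat--Pennig), whereas you use the unital inclusion $\CC\map\Z_{p,q}$, one unital map $\Z_{p,q}\map\cZ$, and the Toms--Winter homotopy properties of $\one_\cZ\otimes\id_\cZ$; both variants work, but note that the existence of a unital $*$-homomorphism $\Z_{p,q}\map\cZ$ for \emph{arbitrary} coprime infinite supernatural numbers $p,q$ is precisely the R{\o}rdam--Winter result, and your appeal to the inductive-limit description of $\cZ$ is a little loose for arbitrary $p,q$ --- cite that proposition. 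Part (3) is where you genuinely diverge: the paper argues by Morita invariance of topological Hochschild homology and merely asserts, without computation, that the THH of the two endomorphism ring spectra differ, whereas you give a complete and more elementary obstruction. Namely, $\KK_0(\cQ,\cQ)\cong\QQ$ (UCT plus injectivity of $\QQ$ as a $\ZZ$-module) makes $n\cdot\id$ invertible on the generator $\Sigmat(\cpt\prot\cQ)$, and this rationality propagates through shifts, cofibers and coproducts to the whole localizing subcategory it generates, while $\KK_0(\cZ,\cZ)\cong\ZZ$ shows the other bootstrap category is not $\QQ$-linear; since $\QQ$-linearity is preserved by any additive equivalence, the two cannot be equivalent. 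Your version is self-contained, supplies the missing computation, and rules out even an additive equivalence; the paper's THH argument is in principle a finer invariant, but in this instance your homotopy-ring comparison already suffices.
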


\begin{proof}
Observe that $R_A\circ\Sigmat(B)\simeq\Sigmat(B\prot A)$. Thus for (1) we need to show that the bootstrap categories generated by $\{\Sigmat(\cZ)\}$ and $\{\Sigmat(\cZ\prot\cZ)\}$ are equivalent. Since $\cZ$ is strongly self-absorbing the unital embedding $\id\otimes\one_\cZ: \cZ\map\cZ\prot\cZ$ is a unital $*$-homomorphism between strongly self-absorbing $C^*$-algebras that is homotopic to an isomorphism. Hence $\Sigmat(\cZ)\overset{\sim}{\leftarrow}\Sigmat(\cZ\prot\cZ)$ in $\pNSp[\cZ^{-1}]$, from which the assertion follows.

Thanks to (1) for (2) we simply need to show $\cZ\leqslant_\cZ \Z_{p,q}$, i.e., we need to show that the bootstrap category generated by $\{\Sigmat(\cZ\prot\cZ)\}$ is contained in that of $\{\Sigmat(\Z_{p,q}\prot\cZ)\}$. Recall that it is shown in Proposition 3.5 of \cite{RorWin} that there are unital embeddings $\cZ\map\Z_{p,q}$ and $\Z_{p,q}\map\cZ$, so that the composition is a unital endomorphism $\cZ\map\cZ$. Since the space of unital endomorphisms of $\cZ$ is contractible (see Theorem 2.3 of \cite{DadPen}) the above composition is homotopic to the identity. By tensoring with $\cZ$ and applying $\Sigmat(-)$ we find that the composition $\Sigmat(\cZ\prot\cZ)\map\Sigmat(\Z_{p,q}\prot\cZ)\map\Sigmat(\cZ\prot\cZ)$ is homotopic to the identity in $\pNSp[\cZ^{-1}]$. Hence we conclude that $\Sigmat(\cZ\prot\cZ)$ is a retract of $\Sigmat(\Z_{p,q}\prot\cZ)$ in $\pNSp[\cZ^{-1}]$. Since the bootstrap category generated by $\{\Sigmat(\Z_{p,q}\prot\cZ)\}$ is a localizing subcategory of $\pNSp[\cZ^{-1}]$, it must be closed under retracts, i.e., $\Sigmat(\cZ\prot\cZ)$ belongs to this bootstrap category. Now the assertion follows from Proposition 5.3.5.11 (1) of \cite{LurToposBook}.

We prove (3) by contradiction. Set $E_1 = \End(\Sigmat(\cpt))$ and $E_2 =\End(\Sigmat(\cpt\prot\cQ))$ and assume $\cpt\sim_\cZ \cpt\prot\cQ$. This implies that the topological Hochschild homology $E_1$ and $E_2$ are equivalent. However, this is false.
\end{proof}

\subsection{Bousfield equivalence} An important concept in the global structure of the stable homotopy category is {\em Bousfield equivalence} \cite{BouEquiv}. It is possible to consider a variant of it for noncommutative spectra. We present a definition that is slightly different from the routine generalization of Bousfield equivalence for spectra. Indeed, unlike localizations with respect to arbitrary spectra we focus on smashing colocalizations with respect to certain $C^*$-algebras (their stabilizations are compact objects in $\pNSp$).

\begin{defn} \label{BouEq}
 Let $A,B\in\iCsep^\op$, such that $\Sigmat(A),\Sigmat(B)$ are coidempotent objects in $\pNSp$ (see subsection \ref{ColocNSp}). Then 
 \begin{enumerate}
  \item we say that $A$ and $B$ are {\em Bousfield equivalent} if $\pNSp[A^{-1}] \simeq\pNSp[B^{-1}]$ (as stable $\infty$-categories) and
  
  \item we say that $A\preceq B$ if there is a fully faithful functor $\pNSp[A^{-1}] \hookrightarrow\pNSp[B^{-1}]$.
 \end{enumerate}
\end{defn}

\begin{ex}
It follows from Theorem \ref{KK} and $\cZ$-stability in bivariant $\K$-theory that the canonical unital $*$-homomorphism $\CC\map\cZ$ induces an equivalence in $\ikk^\op =\pNSp[\cpt^{-1}]$. Therefore, the inclusion $\pNSp[(\cZ\prot\cpt)^{-1}]\hookrightarrow\pNSp[\cpt^{-1}]$ (see Example \ref{colocEx}) is an equivalence of stable $\infty$-categories whence $\cZ\prot\cpt$ and $\cpt$ are Bousfield equivalent.
\end{ex}

\begin{ex}
If $\cD\map\cD'$ is an injective $*$-homomorphism between strongly self-absorbing $C^*$-algebras, then it follows from Proposition \ref{SScoloc} and Example \ref{SScolocEx} that $\cD'\preceq\cD$. 
\end{ex}

\begin{rem}
In view of the results in \cite{SchExotic} it is important to work with stable $\infty$-categories rather than their triangulated homotopy categories in the above definition. Note that the definition extends effortlessly to all coidempotent objects in $\pNSp$ (and eventually to all objects in $\pNSp$ with some effort). It would be interesting to classify strongly self-absorbing $C^*$-algebras up to Bousfield equivalence (according to Definition \ref{BouEq}).
\end{rem}


\bibliographystyle{abbrv}
\bibliography{/home/mahanta/Professional/math/MasterBib/bibliography}

\vspace{5mm}

\end{document}